\documentclass[10pt]{amsart}
\usepackage{graphicx}
\usepackage{amsmath,amsthm,amssymb,enumerate}
\usepackage{amscd}
\usepackage{euscript,mathrsfs}
\usepackage[citecolor=blue,colorlinks=true]{hyperref}
\usepackage{mathpazo}
\usepackage[left=3cm,right=3cm,top=4cm,bottom=4cm]{geometry}
\usepackage[shortlabels]{enumitem}
\setenumerate{label={\rm (\alph{*})}}
\usepackage{color}
\catcode`\@=11 \@addtoreset{equation}{section}

\catcode`\@=12

\allowdisplaybreaks

\newtheorem{Theorem}{Theorem}[section]
\newtheorem*{Theorem*}{Theorem}
\newtheorem{Proposition}[Theorem]{Proposition}
\newtheorem{Lemma}[Theorem]{Lemma}
\newtheorem{Corollary}[Theorem]{Corollary}

\theoremstyle{definition}
\newtheorem{Definition}[Theorem]{Definition}
\newtheorem{Remark}[Theorem]{Remark}

\newcommand{\bTheorem}[1]{
\begin{Theorem} \label{T#1} }
\newcommand{\eT}{\end{Theorem}}

\newcommand{\bProposition}[1]{
\begin{Proposition} \label{P#1}}
\newcommand{\eP}{\end{Proposition}}

\newcommand{\bLemma}[1]{
\begin{Lemma} \label{L#1} }
\newcommand{\eL}{\end{Lemma}}

\newcommand{\bCorollary}[1]{
\begin{Corollary} \label{C#1} }
\newcommand{\eC}{\end{Corollary}}

\newcommand{\bRemark}[1]{
\begin{Remark} \label{R#1} }
\newcommand{\eR}{\end{Remark}}

\newcommand{\bDefinition}[1]{
\begin{Definition} \label{D#1} }
\newcommand{\eD}{\end{Definition}}


\DeclareMathOperator{\id}{id}
\newcommand*{\defeq}{\mathrel{\vcenter{\baselineskip0.5ex \lineskiplimit0pt
			\hbox{\scriptsize.}\hbox{\scriptsize.}}}=}
\newcommand{\centeredcirc}[1]{\vcenter{\hbox{$#1\circ$}}}
\newcommand{\smallcirc}{\mathbin{\mathchoice{\centeredcirc\scriptstyle}{\centeredcirc\scriptstyle}{\centeredcirc\scriptscriptstyle}{\centeredcirc\scriptscriptstyle}}}
\newcommand{\T}{\mathbb{T}}
\newcommand{\Z}{\mathbb{Z}}
\newcommand{\N}{\mathbb{N}}
\newcommand{\bH}{\textbf{H}}
\newcommand{\bD}{\textbf{D}}
\newcommand{\bX}{\textbf{X}}
\newcommand{\bT}{\textbf{T}}
\newcommand{\bZ}{\textbf{Z}}
\newcommand{\bC}{\textbf{C}}
\newcommand{\bL}{\textbf{L}}
\newcommand{\bW}{\textbf{W}}
\newcommand{\bS}{\textbf{S}}

\newcommand{\bR}{\mathbf{R}}
\newcommand{\bN}{\mathbf{N}}
\newcommand{\W}{\mathbf{W}}
\newcommand{\clA}{\mathcal{A}}
\newcommand{\clB}{\mathcal{B}}
\newcommand{\clE}{\mathcal{E}}
\newcommand{\clL}{\mathcal{L}}
\newcommand{\clX}{\mathcal{X}}
\newcommand{\clS}{\mathcal{S}}
\newcommand{\clP}{\mathcal{P}}
\newcommand{\clU}{\mathcal{U}}

\newcommand{\les}{\lesssim}

\newcommand{\tmop}[1]{\ensuremath{\operatorname{#1}}}

\DeclareMathOperator*{\argmin}{arg\,min}

\newcommand{\loc}{{\textrm{loc}}}



\allowdisplaybreaks

\newcommand\dela[1]{}

\newcommand{\bFormula}[1]{
\begin{equation} \label{#1}}
\newcommand{\eF}{\end{equation}}

\definecolor{Cgrey}{rgb}{0.85,0.85,0.85}
\definecolor{Cblue}{rgb}{0.50,0.85,0.85}
\definecolor{Cred}{rgb}{1,0,0}
\definecolor{fancy}{rgb}{0.10,0.85,0.10}

\newcommand\Cbox[2]{%
    \newbox\contentbox%
    \newbox\bkgdbox%
    \setbox\contentbox\hbox to \hsize{%
        \vtop{
            \kern\columnsep
            \hbox to \hsize{%
                \kern\columnsep%
                \advance\hsize by -2\columnsep%
                \setlength{\textwidth}{\hsize}%
                \vbox{
                    \parskip=\baselineskip
                    \parindent=0bp
                    #2
                }%
                \kern\columnsep%
            }%
            \kern\columnsep%
        }%
    }%
    \setbox\bkgdbox\vbox{
        \color{#1}
        \hrule width  \wd\contentbox %
               height \ht\contentbox %
               depth  \dp\contentbox
        \color{black}
    }%
    \wd\bkgdbox=0bp%
    \vbox{\hbox to \hsize{\box\bkgdbox\box\contentbox}}%
    \vskip\baselineskip%
}


\date{}


\begin{document}


\title[RDS generated by 3D NSE with transport noise]{Random Dynamical System generated by the 3D Navier-Stokes equation with rough transport noise}

\author{Jorge Cardona}
\address[J. Cardona]{Technical Universit\"at Darmstadt}
\email{jorge@cardona.co}

\author{Martina Hofmanov\'a}
\address[M. Hofmanov\'a]{Fakult\"at f\"ur Mathematik, Universit\"at Bielefeld, D-33501 Bielefeld, Germany}
\email{hofmanova@math.uni-bielefeld.de}

\author{Torstein Nilssen}
\address[T. Nilssen]{Department of Mathematical Sciences, University of Adger, Norway}
\email{torstein.nilssen@uia.no}

\author{Nimit Rana}
\address[N. Rana]{Fakult\"at f\"ur Mathematik, Universit\"at Bielefeld, D-33501 Bielefeld, Germany}
\email{nrana@math.uni-bielefeld.de}

\thanks{
The financial support by the German Science Foundation DFG through  the Research Unit FOR 2402 is greatly acknowledged. This project has received funding from the European Research Council (ERC) under the European Union's Horizon 2020 research and innovation programme (grant agreement No. 949981).  }

\begin{abstract}

We consider the Navier-Stokes system in three  dimensions perturbed by a transport noise which is sufficiently smooth in space and rough in time. The existence of a weak solution was proved in \cite{Hofmanova_etal_2019}, however, as in the deterministic setting the question of uniqueness remains a major open problem. An important feature of systems with uniqueness is the semigroup property satisfied by their solutions. Without uniqueness, this property cannot hold generally.  We select a system of solutions satisfying the semigroup property with appropriately shifted rough path. In addition, the selected solutions respect the well accepted admissibility criterium for physical solutions, namely, maximization of the energy dissipation.  Finally, under suitable assumptions on the driving rough path, we show that the  Navier-Stokes system generates a measurable random dynamical system. To the best of our knowledge, this is the first construction of a measurable single-valued random dynamical system in the state space for an SPDE without uniqueness.

\end{abstract}

\subjclass[2010]{60H15, 60L20, 60L50, 35Q30, 37H10}
\keywords{Navier--Stokes equations, rough paths, random dynamical system}


\maketitle

\tableofcontents

\section{Introduction}
By now, the question of generating a random dynamical system by an It\^{o}-type stochastic differential equation (SDE) is well settled under natural assumptions on the coefficients implying in particular uniqueness, refer to \cite{Arnold_1998B,Elworthy_1982B} for details. The crucial step consists in proving the stochastic flow property by using the Kolmogorov continuity theorem \cite[Section 1.4]{Kunita_1990B} about the existence of a continuous random field with a finite-dimensional parameters range. Upon establishing the flow property, the existence of a random dynamical system induced by solution to considered SDE follows as derived in \cite{Scheutzow_1996}.

Unfortunately, it is in general not possible to extend the Kolmogorov continuity theorem to an infinite-dimensional parameter range and to obtain stochastic flows for It\^{o}-type stochastic PDEs (SPDEs). The main issue here is that the solution to an It\^{o}-type SPDE is defined almost surely where the exceptional set depends on the initial condition. Nevertheless, it is possible to obtain a stochastic flow, and in particular a random dynamical system, for It\^{o}-type SPDEs driven by special noises provided uniqueness can be shown, refer to \cite{Flandoli_1995B} for details. For instance, if the SPDE is driven by either additive noise or linear multiplicative noise, then it can be  transformed into a random evolution equation. If additionally uniqueness can be proved for the transformed system then it  is possible to construct the associated  stochastic flow  and to generate a random dynamical system. In the context of 2D stochastic Navier-Stokes equations, which has a unique global solution, this approach has been used successfully  by Crauel and Flandoli \cite{Crauel+Flandoli_1994} in  bounded domain and Brze\'zniak and Li  \cite{ZB+Li_2006} in an unbounded domain. 

Due to non-uniqueness of global solutions to stochastic Navier-Stokes equations in three space dimensions, the above method fails. One way to overcome this difficulty, as given in Sell \cite{Sell_1996},  is to replace the state space by the phase space consisting of full trajectories of  solutions of the equation. By modifying the  Sell concept,  Flandoli  and Schmalfuss \cite{Flandoli+Schmalfuss_1996} were able to work with single-valued cocycles and proved the global existence of a random attractor for  3D stochastic Navier-Stokes equation with a multiplicative white noise.  Another way to talk about a random dynamical system generated by an SPDE, which is not known to have a unique global solution, was established in  \cite{Rubio+Robinson_2003}, where the authors introduced a set-valued stochastic semiflow and a set-valued random dynamical system. In particular, by generalizing the idea of Ball \cite{Ball_1997} to the stochastic setting, they found a certain attracting set that depends on the random parameter  and concluded the existence of a random attractor for the 3D Navier-Stokes equations driven by white noise. 

As described above briefly, for some very specific structure of the noise driving an SPDE, a suitable transformation of the equation into a pathwise evolution equation allows to prove the existence of  a random dynamical system. This forces us to rethink and consider a pathwise approach to construct solutions of an SPDE such that they satisfy the cocycle property. Recently, various techniques have been derived  to give a pathwise meaning to the solutions of SPDEs by exploiting the ideas of Lyons' rough paths theory \cite{Lyons_1998} in one way or other. 
However, there are very few results that explore the pathwise properties of the solutions to study the long time behavior of stochastic flows.  For example, in \cite{Schmalfuss_etal_2010,Schmalfuss_etal_2016} the authors deal  with random dynamical systems for SPDEs driven by a fractional Brownian motion with Hurst parameter $H \in (1/2, 1)$ and $H \in (1/3, 1/2]$. The proof relies on the definition of a pathwise stochastic integral based on the Riemann-Liouville fractional derivatives. Using regularizing properties of analytic semigroups, the authors in \cite{Hesse+Neamtu_2020} proved the existence of a unique global  solution to infinite-dimensional parabolic rough evolution equations and investigated random dynamical systems for such equations.  Existence of a unique random attractor applicable for a large class of SPDEs perturbed by Brownian motion, fractional Brownian motion and L\'evy-type noise is shown in \cite{Gess_etal_2011}. The results obtained in \cite{Gess_etal_2011} are based on the variational approach to SPDE. Recently, the authors in \cite{Hofmanova_etal_2021+}, defined an intrinsic notion of solution based on ideas from the rough path theory and studied the well-posedness of 2D Navier-Stokes equations in an equivalent vorticity formulation. They derive rough path continuity of the equation and show that for a large class of driving signals, the system generates a continuous random dynamical system.

To the best of our knowledge, all the rough PDEs (RPDEs) considered so far to investigate the induced random dynamical system proved to have a unique global solution. However, from the point of view of applications there is a huge number of physically relevant systems where uniqueness is either unknown or not even valid. Such examples appear for instance in  fluid dynamics and are represented by the iconic example of the Navier-Stokes system. Nevertheless, despite the theoretical difficulties, the Navier-Stokes system and related fluid dynamics equations are widely used in practice and count as a reliable basis for modeling
and simulations. From the mathematics point of view, it is therefore essential to study such models, which are not known to have unique global solutions, and to develop methods to investigate their long time behavior.

Taking a step in this direction,   we study the existence of a random dynamical system  induced by   solutions of a  system of Navier-Stokes equations on the three-dimensional torus $\bT^3$ subject to a rough transport noise. The noise arises from perturbing the transport advecting velocity field by a space-time dependent noise and is, at least formally, energy conservative.
The rough path philosophy of splitting analysis from probability, as well as a Wong-Zakai stability result are the key ingredients of our construction. For the first time, 
we are able to construct a measurable single-valued random dynamical system in the state space for an SPDE without uniqueness. Our proof relies on a selection procedure and is direct, benefiting from the rough path theory rather than a transformation into a random PDE.

The system of interest governs the  evolution of the velocity field $u: \bR_+ \times \bT^3 \rightarrow \bR^3$ and the pressure $p : \bR_+ \times \bT^3 \rightarrow \bR$ of an incompressible viscous fluid and reads as
\begin{equation} \label{eq:classicalForm}
\begin{aligned}
\partial_t u +(u-\dot{a})\cdot \nabla u +\nabla p& =   \Delta u,  \\
\nabla \cdot u & = 0,  \\
u(0) & = u_0 \in L^2(\bT^3;\bR^3).
\end{aligned}
\end{equation}
Here $\dot{a}$ is the (formal) derivative in time of a function $a = a_t (x) : \bR_+ \times \bT^3 \to \bR^3$ which can have the following factorization:
\begin{equation}\label{eq:aa3}
a_t(x) = \sigma_k(x) z^k_t=  \sum_{k=1}^{K} \sigma_k(x) z^k_t,
\end{equation}
where we adopt the convention of summation over repeated indices $k\in \{1,\ldots,K\}$. We also assume that for all $k\in \{1,\ldots,K\}$, the vector fields $\sigma_k:\bT^3 \rightarrow \bR^3$ are bounded, divergence-free, and twice-differentiable with uniformly bounded first and second derivatives. The driving signal $z$ is assumed to be a  $\bR^{K}$-valued $\alpha$-H\"older path for some $\alpha \in \left( \frac{1}{3}, \frac{1}{2}\right]$ which can be lifted to a geometric rough path $\bZ=(Z,\mathbb{Z})$, see Section \ref{sec:weakSoln} for precise assumptions.

The system \eqref{eq:classicalForm}, \eqref{eq:aa3} was studied recently in \cite{Hofmanova_etal_2019} within the framework  of unbounded rough drivers as introduced in \cite{Bailluel+Gub_2017} and further developed in \cite{Deya_etal_2019} and other works, e.g. \cite{Flandoli_etal_2020}. The authors in \cite{Hofmanova_etal_2019} introduced a suitable notion of weak solution to \eqref{eq:classicalForm} and proved existence, see \cite[Theorem 2.13]{Hofmanova_etal_2019}. The proof  relies on a Galerkin approximation in combination with uniform energy estimates of the solution as well as the corresponding remainder terms and a compactness argument. However, it turns out that this notion of solution is not suitable for the construction of the random dynamical system. More precisely, as the kinetic energy of weak solutions is only lower semicontinuous, the so-called energy sinks appear and cannot be avoided. Consequently, at the time of each energy sink, the actual energy of the system is not controlled by the energy of the solution. This is a problem since in a random dynamical system  the future evolution after each time $t$ has to be fully determined by the value of the solution at $t$.

Inspired by \cite{Basaric_2020, Breit_etal_2020,Breit_etal_2020-a}, we overcome this issue by including an auxiliary variable, i.e. the energy $E$, as a part of the solution. Accordingly,  a weak solution in our context actually consists of a triple $[u,p,E]$ of the fluid velocity and the pressure together with the energy. In addition, the Navier-Stokes system \eqref{eq:classicalForm}, \eqref{eq:aa3} needs to be  supplemented by a suitable form of an energy inequality. As usual, see for e.g. \cite[Section 4.1.2]{Hofmanova_etal_2019}, the pressure can be recovered from the velocity so  we do not specify it in our results or in the definition of a solution, see Definition~\ref{def-weaksolution}. Including the energy as an additional datum may seem a bit superfluous at first sight, but  it is indispensable in order to obtain the semigroup property for every time. Otherwise one would only obtain a statement for a.e. time as in \cite{Basaric_2020, Flandoli+Romito_2008}. We refer to Remark~\ref{r3.10} and to \cite[Section 6]{HZZ20} for a further discussion of this issue.

The first main result of the current paper can be stated as follows, see Theorem~\ref{thm-semiflowSel} for the precise formulation.

\begin{Theorem*}  
	The Navier-Stokes equation \eqref{eq:classicalForm}-\eqref{eq:aa3}
	 admits a semiflow selection  in the class of weak solutions, that is, there is a measurable mapping
	\[
	U: [u_0,E_0,\bZ] \mapsto [u,E],
	\]
	which assigns to every initial condition $[u_{0},E_{0}]$ and a rough path $\bZ$ one solution trajectory $[u,E]$ so that 
 the following \textbf{semigroup property} holds true
	\[
	U \{ u_0, E_0,\bZ\}(t_{1}+t_{2}) =
	U \{ U\{t_{1}, u_{0},E_{0},\bZ \}, \tilde{\bZ}_{t_1}\}(t_{2}), \quad \textrm{ for any } t_1, t_2\geq 0,
	\]
	where $\tilde{\bZ}_{t_1}(\cdot) $ denotes the shifted rough path $\tilde{\bZ}_{t_1}(\cdot) := \bZ(t_1+\cdot)$\footnote{With a slight abuse of notation we write $\bZ(t_1+\cdot)$ for the 2-index map $(Z,\mathbb{Z})_{t_{1}+\cdot,t_{1}+\cdot}$.}.
\end{Theorem*}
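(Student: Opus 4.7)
The plan is to implement a Krylov-type semiflow selection, as developed for incompressible and compressible fluids by Flandoli--Romito, Cardona--Cipriano, Breit--Feireisl--Hofmanov\'a and Basaric, but adapted to the rough-path setting where the driving signal $\bZ$ itself enters the data of the selection. First, I introduce the multivalued solution operator $\mathcal{U}[u_0,E_0,\bZ]$ consisting of all weak solutions $[u,E]$ in the sense of Definition~\ref{def-weaksolution}, regarded as elements of a suitable trajectory space $\mathcal{T}$ (e.g.\ $u$ in $C_{\mathrm{loc}}(\bR_+;H^{-\eta})\cap L^\infty_{\mathrm{loc}}(\bR_+;L^2_{\mathrm{weak}})$ for some $\eta>0$, and $E$ in the space of non-increasing BV functions right-continuous at every point). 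The set is non-empty by \cite[Theorem~2.13]{Hofmanova_etal_2019}, and compact as well as upper-hemicontinuous in $[u_0,E_0,\bZ]$ (the last entry in the rough-path topology) by the same a priori estimates and compactness argument that underlies the existence theorem.

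Next, I establish the two structural properties that make the multivalued map a \emph{semiflow}: the \textbf{shift invariance}, namely that $[u,E]\in\mathcal{U}[u_0,E_0,\bZ]$ implies $[u(t_1+\cdot),E(t_1+\cdot)]\in\mathcal{U}[u(t_1),E(t_1),\tilde{\bZ}_{t_1}]$, and the \textbf{concatenation property}, namely that gluing a solution on $[0,t_1]$ with a solution on $[t_1,\infty)$ driven by $\tilde{\bZ}_{t_1}$ and starting from the value at $t_1$ yields a solution on $\bR_+$ driven by $\bZ$. Both properties reduce to showing that the rough integral appearing in the weak formulation satisfies a Chasles/cocycle identity under the shift $\bZ\mapsto\tilde{\bZ}_{t_1}$; this is standard for the geometric rough path $(Z,\mathbb Z)$ once one observes that $\mathbb Z_{s,t}$ is additive in the usual way. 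Including $E$ in the state is essential here: right-continuity of the selected $E$ at every point is what makes the concatenation produce an admissible solution for \emph{every} $t_1$, not just a.e.\ $t_1$.

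The selection itself is then carried out by a classical functional-minimization loop. I fix a countable dense family of Laplace-type functionals of the form $I_{\lambda,\beta}[u,E]=\int_0^\infty e^{-\lambda t}\beta(E(t))\,dt$ with $\lambda$ positive rational and $\beta$ ranging over a countable dense family in $C_b(\bR)$, order them, and define inductively $\mathcal{U}_n[u_0,E_0,\bZ]$ as the minimizers inside $\mathcal{U}_{n-1}[u_0,E_0,\bZ]$ of the $n$-th functional. Berge's maximum theorem, combined with the compactness and upper-hemicontinuity from the first paragraph, shows that each $\mathcal{U}_n$ is still non-empty, compact, upper-hemicontinuous, shift invariant, concatenation invariant, and is moreover Borel measurable in $[u_0,E_0,\bZ]$. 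The density of the chosen family in the space of test functionals forces $\mathcal{U}_\infty=\bigcap_n \mathcal{U}_n$ to be a singleton, which defines the desired measurable map $U$. The semigroup identity is a formal consequence of the variational characterization: both $U\{u_0,E_0,\bZ\}(t_1+\cdot)$ and $U\{U\{u_0,E_0,\bZ\}(t_1),\tilde{\bZ}_{t_1}\}$ lie in $\mathcal{U}_\infty[U\{u_0,E_0,\bZ\}(t_1),\tilde{\bZ}_{t_1}]$, by shift invariance of the selection process itself; since $\mathcal{U}_\infty$ is a singleton, they agree.

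The main obstacle I expect is the interaction between the rough-path shift and the \emph{measurability} requirement in Step~3 of the selection: one has to verify that upper-hemicontinuity of $\mathcal{U}$ survives under the change of driver $\bZ\mapsto\tilde{\bZ}_{t_1}$, uniformly in $t_1$ on compact intervals, and that the ordering of minima is preserved under this shift in a Borel manner. This rests on the continuity of the shift map $(\bZ,t_1)\mapsto\tilde{\bZ}_{t_1}$ in the $\alpha$-H\"older rough-path topology together with the rough-path continuity of the solution map already used in the Wong--Zakai argument. A secondary technical point is the choice of topology on the $E$-component: it must be coarse enough for compactness of $\mathcal{U}$, yet fine enough that the pointwise value $E(t_1)$ (entering both the semigroup identity and the concatenation) is a continuous functional on the selected class, which is why admissibility is enforced through the maximal-dissipation criterion of the paper.
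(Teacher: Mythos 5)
Your plan follows essentially the same Krylov/Dafermos-type selection scheme that the paper implements (compactness via sequential stability, shift invariance, concatenation, then iterated minimization of Laplace-type functionals over a countable family, singleton by Lerch's theorem). However, there is a genuine gap in Step~3, in the choice of the countable family of functionals. You take only functionals of the form $I_{\lambda,\beta}[u,E]=\int_0^\infty e^{-\lambda t}\beta(E(t))\,dt$, i.e.\ all of them depend on the trajectory solely through the scalar energy $E$. Iterating the minimization over such a family can at best pin down $E$ uniquely; it says nothing about $u$. Since $E(t)=\tfrac12|u_t|_0^2$ only a.e., and many distinct velocity fields share the same $L^2$-norm profile, the intersection $\mathcal U_\infty=\bigcap_n\mathcal U_n$ in your construction would in general not be a singleton --- you would have selected a unique energy trajectory but could still have a continuum of velocities compatible with it. The paper avoids this by including, alongside the $E$-functionals, a countable family of the form $I_{k,n}[u,E]=\int_0^\infty e^{-\lambda_k t}\,\beta\bigl(\int_{\bT^3}u(t,\cdot)\cdot\mathbf{e}_n\,dx\bigr)\,dt$ for a countable basis $\{\mathbf{e}_n\}$ of $\bL^2$. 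After applying Lerch's theorem, strict monotonicity of $\beta$, and density of the basis, these pin down $u(t)$ a.e., which together with the time-regularity of $u$ yields the singleton. Your version of the argument needs an analogous enlargement of the family.

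Two smaller points. First, your worry about uniform-in-$t_1$ upper hemicontinuity of the multivalued map under the driver shift is not the real obstacle: the rough path $\bZ$ is held fixed throughout the selection loop, and the shift invariance (A3) is a pointwise-in-$t_1$ statement that is verified directly from Chen's relation, not via a uniformity argument. The joint Borel measurability in $[u_0,E_0,\bZ]$ is instead obtained through the Stroock--Varadhan lemma (applied to the compact-valued map into $\mathrm{Comp}(\bX)$) combined with sequential stability, which functions as the Wong--Zakai continuity you allude to. Second, the concatenation property does not ``reduce to a Chasles identity for the rough integral'' in a one-line way: one has to show that the glued remainder $v^{P,\natural}$ has finite $p/3$-variation across the gluing time, which requires a careful Chen-relation rewriting to peel off cross terms and reassemble them into controls, as done in Lemma~\ref{AL3}. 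The idea is right, but it is a substantive estimate, not a formality.
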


To prove this result we adapt to the rough path setting the selection procedure introduced in \cite{Cardona+Kapitanski_2020} and further generalized in \cite{Basaric_2020, Breit_etal_2020,Breit_etal_2020-a}. The key property which then permits to deduce the existence of a measurable random dynamical system is the rough path stability in the spirit of a Wong-Zakai approximation result, see Theorem \ref{thm-SeqStabilitywrtZandD}. Indeed, this permits to go back to probability and consider random driving rough paths which satisfy a suitable cocycle property as introduced in \cite{BRS_2017}. An example is a fractional Brownian motion with Hurst parameter $H>\frac13$.
This brings us to the second main result of this paper. See Theorem~\ref{thm-rds} for details and a precise formulation.
\begin{Theorem*}
	Under the assumption that the driving path $\bZ$ is an  $\alpha$-H\"older rough path cocycle, see \cite{BRS_2017}, the Borel measurable map 
	$$\Phi: (t,\omega,[u_0,E_0]) \mapsto U\{ u_0,E_0, \bZ(\omega)\},$$
	is a measurable random dynamical system over a measurable metric dynamical system $(\Omega,\mathcal{F},\mathbb{P}, (\theta_t)_{t \in \T})$, that is, it satisfies the \textit{cocycle property}, which roughly states that 
		\begin{equation*}
		\Phi(t+s, \omega) = \Phi(t, \theta_s\omega) \smallcirc \Phi(s, \omega) \quad \forall s,t \in \mathbb{T}, \omega \in \Omega. 
		\end{equation*}
\end{Theorem*}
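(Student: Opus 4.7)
The strategy is to deduce both the cocycle identity and the joint Borel measurability of $\Phi$ as essentially immediate consequences of three already-available ingredients: (i) the semigroup property of the selection $U$ from the first main theorem, (ii) the measurability of $[u_0,E_0,\bZ]\mapsto U\{u_0,E_0,\bZ\}$ granted by the same theorem, and (iii) the defining feature of a rough path cocycle in the sense of \cite{BRS_2017}, namely that the shift $\tilde{\bZ}_s$ of the driving rough path is intertwined with the shift $\theta_s$ of the base space $\Omega$.

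The first step is to extract from the rough path cocycle assumption a $(\theta_t)_{t\in\T}$-invariant set $\Omega_0\subset\Omega$ of full measure on which
$$
\tilde{\bZ}_s(\omega) = \bZ(\theta_s\omega)
$$
holds as $\alpha$-H\"older (geometric) rough paths for \emph{every} $\omega\in\Omega_0$ and every $s\in\T$. Working on $\Omega_0$ (which affects neither the metric dynamical system nor the law of $\bZ$), the cocycle identity for $\Phi$ is then a direct computation using the semigroup property of $U$:
\begin{align*}
\Phi(t+s,\omega)[u_0,E_0]
&= U\{u_0,E_0,\bZ(\omega)\}(t+s) \\
&= U\{\,U\{u_0,E_0,\bZ(\omega)\}(s),\,\tilde{\bZ}_s(\omega)\,\}(t) \\
&= U\{\,\Phi(s,\omega)[u_0,E_0],\,\bZ(\theta_s\omega)\,\}(t) \\
&= \Phi(t,\theta_s\omega)\smallcirc\Phi(s,\omega)[u_0,E_0].
\end{align*}
The identity $\Phi(0,\omega)=\id$ is built into the definition of the selection $U$.

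For joint Borel measurability of the map $(t,\omega,[u_0,E_0])\mapsto\Phi(t,\omega,[u_0,E_0])$, I factor it as
$$
(t,\omega,[u_0,E_0])\ \longmapsto\ (t,[u_0,E_0,\bZ(\omega)])\ \longmapsto\ U\{u_0,E_0,\bZ(\omega)\}(t).
$$
The first arrow is measurable because, as part of the definition of a rough path cocycle, $\omega\mapsto\bZ(\omega)$ is measurable into the Polish space of $\alpha$-H\"older rough paths. The second arrow is measurable by the first main theorem (which gives measurability in $[u_0,E_0,\bZ]$) combined with the continuity in $t$ of the evaluation of the trajectory $U\{u_0,E_0,\bZ\}(\cdot)$. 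Composing two Borel maps then gives the required joint measurability of $\Phi$.

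The one genuine technical point is the construction of the $(\theta_t)$-invariant full-measure set $\Omega_0$ on which $\tilde{\bZ}_s(\omega)=\bZ(\theta_s\omega)$ for all $s$ \emph{simultaneously}, rather than at each fixed $s$ outside an $s$-dependent null set. I expect this to be the main obstacle, but it is essentially inherited from \cite{BRS_2017}: one uses joint measurability and right-continuity in $s$ of both sides to remove a single null set, and then restricts to its $(\theta_t)$-invariant hull. Once this is handled, nothing proper to the Navier--Stokes setting remains, and the remaining steps reduce to the semigroup identity and routine composition of measurable maps.
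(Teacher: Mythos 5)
Your overall plan coincides with the paper's: factor $\Phi$ through $(t,\omega,[u_0,E_0])\mapsto(t,[u_0,E_0,\bZ(\omega)])$ and through the selection $U$, then read off the cocycle identity from the semigroup property of $U$ together with the intertwining $\tilde{\bZ}_s(\omega)=\bZ(\theta_s\omega)$ furnished by the rough path cocycle structure. Two of your side remarks, however, need correction.

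First, the construction of a $(\theta_t)$-invariant full-measure set $\Omega_0$ is not needed here and the paper does not perform it. The standing assumption is that $\bZ$ is a geometric $\alpha$-H\"older rough path cocycle in the sense of \cite{BRS_2017}, which \emph{by definition} demands $Z_{s,s+t}(\omega)=Z_{0,t}(\theta_s\omega)$ and $\mathbb{Z}_{s,s+t}(\omega)=\mathbb{Z}_{0,t}(\theta_s\omega)$ for \emph{every} $s,t\in\T$ and \emph{every} $\omega\in\Omega$, not a.e. The perfection-of-cocycle issue is therefore pushed entirely into that hypothesis, and the paper's Lemma~\ref{lem-rpShift} deduces $\tilde{\bZ}_h(\omega)=\bZ(\theta_h\omega)$ as a trivial consequence. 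What you identify as ``the main obstacle'' is in fact absent under the theorem's stated assumption.

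Second, your measurability argument contains a genuine gap: the evaluation $t\mapsto U\{u_0,E_0,\bZ\}(t)\in\bD\subset\bH^{0}\times\bR_{+}$ is \emph{not} continuous in $t$. The trajectory lives in $\bX=C_{\loc}(\T;\bH^{-1})\times L^{1}_{\loc}(\T;\bR)$; the energy component $E$ is merely a locally integrable, non-increasing function, and even the velocity $u$ is only weakly continuous with values in $\bH^{0}$ (it is strongly continuous only into the larger space $\bH^{-1}$). So ``continuity in $t$ of the evaluation'' fails. The paper instead argues that for fixed $(\omega,[u_0,E_0])$ the map $t\mapsto u_t$ is weakly continuous and $t\mapsto E(t-)$ is measurable by the Lebesgue differentiation identity
\begin{equation*}
E(t-)=\lim_{h\to 0}\frac{1}{h}\int_{h}^{t+h}E(s)\,ds,
\end{equation*}
and then composes with the Borel map $[u_0,E_0,\bZ]\mapsto U\{u_0,E_0,\bZ\}$ from Theorem~\ref{thm-semiflowSel}. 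Replacing your continuity claim by this (limit of jointly continuous averaged evaluations plus weak continuity of $u$) closes the gap; the rest of your cocycle computation is identical to the paper's.
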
 

We note that the possibility of changing measurability of the random dynamical system  above into its continuity remains a major open question.  Indeed, this corresponds to continuity with respect to the initial condition, which in the deterministic setting has not been proved so far. In the stochastic setting, on the other hand, the probabilistic counterpart, i.e. the Feller property, was established only under the assumption that the noise is additive and sufficiently non-degenerate, see \cite{DPD,Flandoli+Romito_2008}.

The organization of the  paper is as follows. In Section \ref{sec:notation}, we introduce our notation and provide the required definitions with the notion of weak solution used in the paper.  In Section~\ref{sec:semiflow},  we introduce the concept of a semiflow selection in terms of the two state variables: the velocity and the energy. We also analyze the properties (compactness, shift invariance and
 continuation) of the solution set and prove the  existence of a semiflow selection, refer to Theorem~\ref{thm-semiflowSel}. Section \ref{sec:rds} is devoted to the existence of a random dynamical system, Theorem \ref{thm-rds}, which is a central result of
this paper. We conclude the paper with Appendix \ref{appx:apriori_compactness} where we state  all the required a priori estimates from  \cite[Section 3]{Hofmanova_etal_2019} and state a compact embedding lemma which is helpful in  the proof of the crucial sequential stability result, see Theorem \ref{thm-SeqStabilitywrtZandD}.

\section{Notation}\label{sec:notation}

In this section we fix the notation   which is used throughout the paper. Since we intend to construct  a semiflow for weak solutions to \eqref{eq:classicalForm} whose existence is proven in \cite{Hofmanova_etal_2019}, to avoid any notational confusion, we closely follow \cite[Section 2]{Hofmanova_etal_2019}  and only state the required definitions and their properties.

We  write $a \les b$  if  there exists a positive constant $C$ such that $a \le C b$. If the constant $C$ depends only on the parameters $p_1,\ldots,p_n$, we  write $C(p_1, \ldots, p_n)$ and $\les_{p_1,\ldots,p_n}$, respectively, instead of $C$ and $\les$.
Let $\T:= [0,\infty)$. We let  $\bN$ to be  the set of natural numbers.  Let  $\bT^3$ denotes the three dimensional flat torus.

For given Banach spaces $V_1$ and $V_2$, the space of continuous linear operators from $V_1$ to $V_2$ will be denoted by $\clL(V_1,V_2)$. Note that $\clL(V_1,V_2)$ is endowed with the operator norm which we denote by $|\cdot|_{\clL(V_1,V_2)}$.
For a given $\sigma$-finite measure space $(X,\clX,\mu)$, separable Banach space $V$ with norm $|\cdot|_V$, and $p\in [1,\infty]$,  we denote by $L^p(X;V)$ the Bochner space of  strongly-measurable and $L^{p}$-integrable functions $f: X\rightarrow V$.
For a given Hilbert space $H$ and $T >0$, we let
$
L^2_TH=L^2([0,T];H)$ and $   L^{\infty}_TH=L^\infty([0,T];H).
$
Moreover, we set $\bL^2=L^2( \bT^3 ; \bR^3)$.
We use subscript $``\loc"$ to point out that the elements restricted to any bounded interval $J$ belong to the space with domain $J$. For instance, by $L_{\loc}^\infty(\T;\bR)$ we understand the set of  all $\mu$-equivalence-classes of strongly-measurable functions $f: \T \to \bR$ such that $f|_{J} \in L^\infty(J;\bR)$ for every bounded $J \subset \T$. 
We write  $C_TH=C([0,T];H)$ to denote  the Banach space  of continuous functions from $[0,T]$ to $H$, endowed with the supremum norm in time. Moreover, if $H$ is subject to weak topology, then we write  $C([0,T];H_w)$.

Let $\clS$ be the Fr\'echet space of infinitely differentiable periodic complex-valued functions with the usual set of seminorms.
Let $\clS'$ be the continuous dual space of $\clS$ endowed with the weak-star topology.  In extension of these, we write $\bS'=(\clS')^d$ for the set of continuous linear functions from $\bS=(\clS)^d$ to $\bC$ endowed with the weak-star topology.

For a given $\beta\in \bR$, the Hilbert space $\W^{\beta,2}$  is defined as
$$
\W^{\beta,2} :=(I-\Delta)^{-\frac{\beta}{2}}\bL^2=\{f\in \bS': (I-\Delta)^{\frac{\beta}{2}}f\in \bL^2\},
$$
with inner product
$$
(f,g)_{\beta}:=((I-\Delta)^{\frac{\beta}{2}}f,(I-\Delta)^{\frac{\beta}{2}}g)_{\bL^2}  ,\quad  \; f,g\in \W^{\beta,2},
$$
and  induced norm $|\cdot |_{\beta}$. For notational simplicity, when $\beta=0$ we omit the index in the inner product, i.e. $(\cdot, \cdot) := (\cdot, \cdot)_0$.   Let
$$
\bH^0 : =\left\{f\in \W^{0,2}: \;\nabla \cdot  f= 0\right\}.
$$

\subsection{Helmholtz--Leray projection}\label{sec:HelmholtzLeray_projection}
We denote the Helmholtz--Leray projection by $P :\bS'\rightarrow \bS'$, which is well-known in the study of Navier-Stokes equation, see \cite{RT83}, and let  $Q = I - P$. It follows  that  $P,Q\in \clL(\W^{\beta,2},\W^{\beta,2})$ and that the operator norms of $P$ and $Q$  are bounded by one for all $\beta\in \bR$.

By setting
$$\bH^{\beta} : =P\W^{\beta,2} \qquad \textrm{ and } \qquad \bH_{\perp}^{\beta}: =Q\W^{\beta,2},$$  
it can be showed that  for all $\beta\in \bR$, see for e.g. \cite[Lemma 3.7]{mikulevicius2003cauchy},
$$
\W^{\beta,2} =\bH^\beta\oplus \bH_{\perp}^\beta,
$$
where
$$
\bH^\beta=\left\{f\in \W^{\beta,2}: \;\nabla \cdot  f= 0\right\},
$$
$$
\bH_{\perp}^\beta=\{g\in \W^{\beta,2}:  \langle f,g\rangle_{-\beta,\beta}=0, \;\; \forall f\in \bH^{-\beta} \}.
$$

The following part of this subsection will shed light on the construction of the unbounded rough drivers associated with \eqref{eq:classicalForm}.
Let $\sigma:\bT^3\rightarrow \bR^3$ be  twice differentiable and divergence-free. Moreover, assume that the derivatives of $\sigma$ up to order two are bounded uniformly by a constant $M_0$.

Let  $\clA^1: =\sigma\cdot \nabla$  and  $\clA^2 :=(\sigma\cdot \nabla)(\sigma\cdot \nabla).$ It follows  that there is a  constant  $M$ (depends on $M_0,\beta$)  such that 
$$
|\clA^1|_{\clL(\W^{\beta+1,2},\W^{\beta,2})}\le M, \;\forall \beta\in [0,2],\qquad
|\clA^2 f|_{\clL(\W^{\beta+2,2},\W^{\beta,2})}\le M, \;\forall \beta\in [0,1].
$$
We ask the reader to see \cite{mikulevicius2001note} for such estimates on the whole space, but they can easily be adapted to the periodic setting as required in the current paper.

Since  $P\in \clL(\W^{\beta,2},\bH^\beta)$ and $Q\in \clL(\W^{\beta,2}, \bH^\beta_{\perp})$ for all $\beta\in \bR$,  both of which have operator norm bounded by $1$, we have 
\begin{align}
& |P \clA^1|_{\clL(\bH^{\beta+1},\bH^\beta)}\le M,  \forall \beta\in [0,2], \label{A12bound-1}\\  
& |P \clA^2|_{\clL(\bH^{\beta+2},\bH^\beta)}\le M, \forall \beta\in [0,1], \label{A12bound-2}
\end{align}
and hence $(P \clA^1)^*\in \clL((\bH^{\beta})^*, (\bH^{\beta+1})^*)$ for $\beta\in [0,2]$ and  $(P \clA^2)^*\in \clL((\bH^{\beta})^*, (\bH^{\beta+2})^*)$  for $\beta\in [0,1]$.

To analyze the convective term, we employ the following notation and bounds. Owing to  \cite[Lemma 2.1]{RT83} adapted to fractional norms, the trilinear form defined by
$$
b(u,v,w) : = \int_{\bT^3} ((u\cdot \nabla)v)\cdot w \, \, dx=\sum_{i,j=1}^3\int_{\bT^3} u^i D_iv^j w^j \,\, dx,
$$
satisfies 
\begin{equation}\label{trilinear form estimate}
b(u,v,w)\les_{\beta_1,\beta_2,\beta_3} |u|_{\beta_1}|v|_{\beta_2+1}|w|_{\beta_3}.
\end{equation}
for every $\beta_1,\beta_2,\beta_3\in \bR_+$   such that \begin{gather*}
\beta_1 + \beta_2 + \beta_3 \geq \frac{3}{2}, \;\; \textnormal{ if }\;\; \beta_i \neq \frac{3}{2} \;\textnormal{ for all } i\in \{1,2,3\}, \\ \beta_1 + \beta_2 + \beta_3 > \frac{3}{2}, \;\; \textnormal{ if } \beta_i = \frac{3}{2}\; \textnormal{ for some }  i\in \{1,2,3\}. 
\end{gather*}

Furthermore, for all $u\in  \bH^{\beta_1}$ and  $(v,w)\in \bW^{\beta_2+1,2}\times \bW^{\beta_3,2}$ such that $\beta_1,\beta_2,\beta_3$ satisfy \eqref{trilinear form estimate}, we have
\begin{equation}\label{eq:B prop}
b(u,v,w)=-b(u,w,v) \quad \textnormal{and} \quad b(u,v,v)=0.
\end{equation}
For $\beta_1,\beta_2,$ and $\beta_3$ that satisfy \eqref{trilinear form estimate} and any given $(u,v)\in \bW^{\beta_1,2}\times \bW^{\beta_2+1,2}$, we define $B(u,v)\in \bW^{-\beta_3,2}$ by
$$
\langle B(u,v),w\rangle_{-\beta_3,\beta_3}=b(u,v,w), \quad \forall w\in \bW^{\beta_3,2}.
$$
In last, we define $B_P=PB$ and note that $$B_P:=PB: \bW^{\beta_1,2}\times \bW^{\beta_2+1,2}\rightarrow \bH^{-\beta_3}, $$
for $\beta_1,\beta_2,$ and $\beta_3$ that satisfy \eqref{trilinear form estimate}.
We set $$B(u)=B(u,u), \quad \textrm{ and } \quad  B_P(u) := B_P(u,u).$$

\subsection{Smoothing operators}\label{sec:SmoothingOper}

Motivated by \cite{Bailluel+Gub_2017}, we also need a family of self-adjoint smoothing operators $(J^{\eta})_{\eta \in (0,1]}$ such that for all $\beta \in  \mathbf{R}$ and $\gamma \in \bR_+$,
\begin{equation} \label{smoothingOperator}
|(I - J^{\eta}) f |_{\beta} \les \eta^{\gamma} |f|_{\beta+ \gamma} \hspace{.5cm} \textnormal{ and} \hspace{.5cm} | J^{\eta} f |_{\beta+\gamma} \les \eta^{-\gamma} |f|_{\beta}.
\end{equation}
For a construction of one such family, we refer \cite[Section 2.2]{Hofmanova_etal_2019}.

\subsection{Rough path theory} \label{ss:rp}
For a given interval $I \subset \bR,$ we set 
\begin{align*}
\Delta_I   := \{(s,t)\in I^2: s\le t\},  \qquad  \Delta^{(2)}_I  := \{(s,\theta,t)\in I^3: s\le\theta\le t\}. 
\end{align*}
For a path $f: I \to \bR^K$ we define its increment as $ \delta f_{st} := f_t-f_s, \forall s,t \in I$ and for a two-index map $g: \Delta_{I}\rightarrow \bR$, we define the second order increment operator $$\delta g_{s \theta t} := g_{st} - g_{\theta t} - g_{s \theta}, \quad \forall (s,\theta,t)\in\Delta^{(2)}_I.$$

Let $\alpha >0$ and $J$ be a bounded interval in $\bR$. We denote by $C_2^{\alpha}(J; \bR^K)$ the closure of the set of smooth $2$-index maps $g: \Delta_J \to \bR^K$ with respect to the H\"older coefficient  
\begin{align*}
[g]_{\alpha,J}  := 	\sup_{s,t \in \Delta_J, s\neq t}  \frac{|g_{st}|}{|t-s|^\alpha} < \infty. 
\end{align*}
Note that, since the zero element is $g_{st} =0$ for all $(s,t) \in \Delta_J$,  we infer that $[g]_{\alpha,J} $ is actually a norm on $C_2^{\alpha}(J; \bR^K)$. Note that with this definition, the space $C_2^{\alpha}(J; \bR^K)$   is  Polish. By $C^{\alpha}(J; \bR^K)$ we denote the closure  of the set of smooth paths $f : J \to \bR^K$ w.r.t. the semi-norm $[\delta f]_{\alpha,J}$. 
By $C_{2,\loc}^{\alpha}(\T; \bR^K)$  we denote the space of $2$-index maps $g : \Delta_{\T} \to \bR^K$ such that the restriction of $g$ on every bounded interval $J \subset \T$, which we denote by $g|_{\Delta_J} $, belongs to $C_{2}^{\alpha}(J; \bR^K)$.

Next, we present the definition of an $\alpha$-H\"older rough path. A detailed exposition of rough path theory can be found in \cite{FH_2014}. 
\begin{Definition}\label{defi-rough-path}
	Let $K \in \bN$ and $\alpha \in \left(\frac{1}{3}, \frac{1}{2} \right]$. An $\alpha$-H\"older rough path is a pair 
	\begin{equation}\label{alpha-rpLoc}
	\bZ =(Z , \Z) \in C_{2,\loc}^{\alpha}(\T; \bR^K) \times C_{2,\loc}^{2\alpha}(\T; \bR^{K\times K}),
	\end{equation}
	such that for every bounded interval $J$ in $\T$
	\begin{equation}\label{alpha-rp}
	(Z|_{\Delta_J} , \Z|_{\Delta_J}) \in C_2^{\alpha}(J; \bR^K) \times C_2^{2\alpha}(J; \bR^{K\times K}),
	\end{equation}
	and satisfies the Chen's relation
	\begin{equation*}\label{chen-rela}
	\delta \mathbb{Z}_{s\theta t}=Z_{s\theta} \otimes Z_{\theta t},  \quad \forall(s, \theta, t) \in \Delta^{(2)}_{J}.
	\end{equation*}
	An $\alpha$-H\"older rough path $\mathbf{Z}=(Z, \mathbb{Z})$ is said to be geometric if the restriction $\bZ|_{J}$ can be obtained as the limit in the   product topology   of a sequence of rough paths  $\{(Z^{n},\mathbb{Z}^{n})\}_{n \in \bN}  \subset C_2^{\alpha}(J; \bR^K) \times C_2^{2\alpha}(J; \bR^{K\times K})$ such that for each $n\in \bN$, 
	$$Z^{n}_{st}:= \delta z^{n}_{st} \quad \textnormal{ and } \quad \mathbb{Z}^{n}_{st}:=\int_s^t \delta z^{n}_{s\theta} \otimes \mathrm{d} z^{n}_\theta ,$$
	for some smooth path $z^n:J  \to \bR^K$, where the  iterated integral is understood in the Riemann sense. 
	
	We denote by $\mathcal{C}^{\alpha}_g(\T;\bR^K)$ the set of all geometric $\alpha$-H\"older rough paths and endow it with the product topology. 
\end{Definition}

For given bounded interval $J \subset \bR$ and $(Z,\Z) \in C^{\alpha}_2 (J;\bR^{K}) \times C^{2\alpha}_2 (J; \bR^{K\times K})$, let us further set 
\begin{equation*}
||Z||_{\alpha,J} :=  \sup_{s,t \in \Delta_J, s\neq t}  \frac{|Z_{st}|}{|t-s|^\alpha}, \qquad ||\Z||_{2\alpha,J} :=  \sup_{s,t \in \Delta_J, s\neq t}  \frac{|\mathbb{Z}_{st}|}{|t-s|^{2\alpha}}, 
\end{equation*}
and 
\begin{equation*}
|||\bZ|||_{\alpha,J}  := ||Z||_{\alpha,J} + ||\Z||_{2\alpha,J}. 
\end{equation*}
Note that $|||\cdot|||_{\alpha,J}$ is a norm on $C^{\alpha}_2 (J;\bR^{K}) \times C^{2\alpha}_2 (J; \bR^{K\times K})$.

We also need to deal with finite $p$-variation spaces. To introduce them let $\mathcal{P}(J)$ denote the set of all partitions of a bounded interval $J$ and let $V$ be a Banach space with norm $| \cdot |_V$. 
A function $g: \Delta_J \to V$ is said to have finite $p$-variation for some $p>0$ on $J$ if 
$$
|g|_{p-\textnormal{var};J;V}:=\sup_{(t_i)\in \clP(J)}\left(\sum_{i}|g _{t_i t_{i+1}}|^p_V \right)^{\frac{1}{p}}<\infty, 
$$
and we denote by $C_2^{p-\textnormal{var}}(J; V)$ the set of all continuous functions with finite $p$-variation on $J$ equipped with the seminorm $|\cdot |_{p-\textnormal{var}; J;V}$.  We denote by $C^{p-\textnormal{var}}(J; V)$ the set of all paths $z : J \rightarrow V$ such that $\delta z \in C_2^{p-\textnormal{var}}(J; V)$.

A two-index map $\omega: \Delta_J \rightarrow [0,\infty)$  is called a control if  \begin{itemize}
	\item it is continuous on $\Delta_J$;
	\item it attains zero on diagonal i.e.,  for all $s\in J$, $\omega(s,s)=0$;
	\item it is superadditive i.e.,  for all $(s,\theta,t)\in \Delta^{(2)}_J$, $ \omega(s,\theta)+\omega(\theta ,t)\le \omega(s,t). $
\end{itemize} 

If for a given $p > 0$, $g \in C^{p-\textnormal{var}}_2(J; V)$, then it is well-known that the 2-index map $\omega_g: \Delta_J \rightarrow [0,\infty)$ defined by
$$
\omega_g(s,t) := |g|_{p-\textnormal{var};[s,t]}^p 
$$
is a control, see for e.g.,  \cite[Proposition 5.8]{FV_2010}. Moreover, in such situation,   $ |g_{st}|_V \leq \omega_g(s,t)^{\frac{1}{p}}$ for all $(s,t)\in \Delta_J$. 

One can equivalently define a semi-norm on $C_2^{p-\textnormal{var}}(J;V)$ as, see \cite[Section 2.3]{Hofmanova_etal_2019},  
\begin{equation}\label{equivdefpvar}
|g |_{p-var; [s,t]} = \inf \{ \omega(s,t)^{\frac{1}{p}} : |g_{uv}|_V \leq \omega(u,v)^{\frac{1}{p}} \textnormal{ for all } (u, v)  \in \Delta_{[s,t]} \} . 
\end{equation}

Motivated by \eqref{equivdefpvar},  we define a local version of the $p$-variation spaces.

\begin{Definition}\label{def:variationSpace}
	Given an interval $J=[a,b]$ for some $a,b \in \T$, a control $\varpi$ on $\Delta_J$, and a positive real number $L$, we denote by $C^{p-\textnormal{var}}_{2, \varpi, L}(J; V)$  the space of continuous two-index maps $g : \Delta_J \rightarrow V$ for which  there exists at least one control $\omega$ such that $|g_{st}|_V \leq \omega(s,t)^{\frac{1}{p}}$ for every $(s,t)\in \Delta_J$ which gives $\varpi(s,t) \leq L$ .
	
	We define a semi-norm on this space by
	$$
	|g |_{p-\textnormal{var}, \varpi,L; J} :=\inf \left\{\omega(a,b)^{\frac{1}{p}} :  \omega  \textnormal{ is a control s.t. } |g_{st}|_V \leq \omega(s,t)^{\frac{1}{p}}, \;\forall (s, t)  \in \Delta_{J} \textnormal{  with } \varpi(s,t) \leq L \right\} . 
	$$ 
	By $C^{p-\textnormal{var}}_{2, \varpi, L,\loc}(\T; V)$ we mean the set of continuous two-index maps $g : \Delta_{\T} \rightarrow V$ such that for every bounded interval $J \subset \T$ the restriction $g|_{\Delta_J}$ belongs to $C^{p-\textnormal{var}}_{2, \varpi, L}(J; V)$. 
\end{Definition}

\medskip

Observe that, since the rough perturbation  in \eqref{eq:classicalForm} is  (unbounded) operator valued, it is necessary to use the notion of unbounded rough drivers, which can be seen as operator valued rough paths with values in a suitable space of unbounded operators, see \cite{Bailluel+Gub_2017}.
In what follows, by scale we mean a family $(E^{\beta}, | \cdot |_{\beta})_{ \beta \in \bR_+}$ of Banach spaces such that $E^{\gamma+ \beta}$ is continuously embedded into $E^{\beta}$ for $\gamma \in \bR_+$. For $\beta \in\bR_+,$ we denote by $E^{-\beta}$ the topological dual of $E^{\beta}$, and note that, in general, $E^{-0}\neq E^0.$

\begin{Definition}
	\label{def:urd}
	Let $\alpha \in \left( \frac{1}{3}, \frac{1}{2}\right]$ and a bounded interval $J\subset \T$ be given. A continuous unbounded $\alpha$-rough driver with respect to the scale $(E^{\beta}, |\cdot |_{\beta})_{\beta \in\bR_+}$, is a pair $\mathbf{A} = (A^1,A^2)$ of $2$-index maps such that
	there exists a  control $\omega_A$ on $J$ such that for every $(s,t)\in \Delta_J$,
	\begin{equation}\label{ineq:UBRcontrolestimates}
	| A^1_{st}|_{\mathcal{L}(E^{-\beta},E^{-(\beta+1)})} \leq (\omega_{A}(s,t))^\alpha \ \  \text{for}\ \ \beta \in [0,2], \quad
	|A^2_{st}|_{\mathcal{L}(E^{-\beta},E^{-(\beta+2)})}  \leq (\omega_{A}(s,t))^{2\alpha} \ \ \text{for}\ \ \beta \in [0,1],
	\end{equation}
	and  the Chen relation holds true, that is,
	\begin{equation}\label{eq:chen-relation}
	\delta A^1_{s\theta t}=0,\qquad\delta A^2_{s\theta t}= A^1_{\theta t}A^1_{s\theta},\;\;\forall (s,\theta,t)\in\Delta^{(2)}_J.
	\end{equation}
\end{Definition}

\subsection{Definition of weak solution}\label{sec:weakSoln}

In this section, we define a notion of a weak solution to  \eqref{eq:classicalForm} and \eqref{eq:aa3}. 

Let $z\in C_{\loc}^{\alpha}(\T;\bR^K)$ be such that it can be lifted to a  continuous geometric  $\alpha$-H\"older rough path $\bZ=(Z,\mathbb{Z})\in \mathcal{C}^{\alpha}_{g,\loc}(\T;\bR^K)$ for some $\alpha\in \left( \frac{1}{3}, \frac{1}{2} \right]$. For each $k\in \{1,\ldots,K\}$, assume that  $\sigma_k : \bT^3 \rightarrow \bR^3$ is  twice differentiable and divergence-free. Moreover, assume that for all $k\in \{1,\ldots,K\}$,  $\sigma_k$ and its derivatives up to order two are bounded uniformly.

Applying the Leray projection $ P :\W^{\alpha,2} \rightarrow \bH^\alpha$ and gradient projection $Q: \W^{\alpha,2} \rightarrow \bH^{\alpha}_{\perp}$, defined in Section \ref{sec:HelmholtzLeray_projection}, separately to \eqref{eq:classicalForm} with \eqref{eq:aa3} yields
\begin{align}
	& \partial_t u + P[ (u\cdot \nabla) u] =  \Delta u+P[ (\sigma_k\cdot \nabla) u] \dot{z}_t^k,  \label{NSDiffFormSystem_u} \\
	& \nabla p+Q[ (u \cdot \nabla)  u] =Q [(\sigma_k \cdot \nabla)  u] \dot{z}_t^k.  \label{NSDiffFormSystem_pi}
\end{align}
By setting 
$$\pi: = \int_0^{\cdot} \nabla p_r\,dr, $$ 
and integrating the  system \eqref{NSDiffFormSystem_u}-\eqref{NSDiffFormSystem_pi} over $[s,t]$ and then iterating the equation into itself we obtain, see \cite[Section 2.5]{Hofmanova_etal_2019} for a complete derivation, 
\begin{align}
& \delta u_{st} +\int_s^t P[ (u_r \cdot \nabla)  u_r]\,dr =\int_s^t \Delta u_rdr+ [A_{st}^{P,1}+A_{st}^{P,2}]u_s +u_{st}^{P, \natural} , \label{NSRoughFormSystem_u}\\
& \delta \pi_{st}+\int_s^t Q[ (u_r \cdot \nabla ) u_r)]\,dr = [A_{st}^{Q,1}+A_{st}^{Q,2}] u_s + u_{st}^{Q, \natural}  , \label{NSRoughFormSystem_pi}
\end{align}
where
\begin{gather*}
A^{P,1}_{st}\varphi :=   P  [(\sigma_k \cdot \nabla)  \varphi]   \, Z_{st}^k, \qquad
A^{P,2}_{st}\varphi:=P[(\sigma_k\cdot\nabla) P [(\sigma_i\cdot\nabla)\varphi]]\mathbb{Z}^{i,k}_{st},\\
A^{Q,1}_{st}\varphi :=    Q  [(\sigma_k \cdot \nabla)  \varphi] \, Z_{st}^{k}, \qquad
A^{Q,2}_{st}\varphi :=   Q[(\sigma_k\cdot\nabla) P [(\sigma_i\cdot\nabla)\varphi]] \, \mathbb{Z}^{i,k}_{st}.
\end{gather*}

\begin{Remark}
As shown in \cite[Section 4.1.2]{Hofmanova_etal_2019}, the pressure term $\pi$ can be uniquely determined by the velocity $u$. Hence,  we only concentrate on the construction of the random dynamical system  associated  to \eqref{NSDiffFormSystem_u}.
\end{Remark}
To define the considered notion of a weak solution which is suitable for semiflow selection, let us first define the set of admissible  initial data
\begin{equation*}
\bD := \left\{ [x, e] \in \bH^0 \times \bR_+ : \frac{1}{2} | x |_0^2 \leq e \right\}.
\end{equation*}
Note that $\bD$ is a closed convex subset of $\bH^0 \times \bR_+$.  Recall that $\T = [0,\infty)$.
\begin{Definition}[Weak solution]\label{def-weaksolution}
	Given $[u_0, E_0] \in \bD $ and a geometric $\alpha$-H\"older rough path 
	\begin{equation}\label{p-var-rp}
	\bZ=(Z, \mathbb{Z}) \in C^{\alpha}_{2,\loc} (\T;\bR^{K}) \times C^{2\alpha}_{2,\loc} (\T; \bR^{K\times K}), \textrm{ for some } \alpha \in \left(\frac{1}{3}, \frac{1}{2} \right], 
	\end{equation} 
	we say that a pair $[u,E]$  is  a weak solution of \eqref{NSDiffFormSystem_u} if 
	\begin{enumerate}
		\item $u  : \T \rightarrow \bH^0 $ is a  weakly continuous function  and $u\in L^2_{\loc}(\T;\bH^1 ) \cap L^{\infty}_{\loc}(\T;\bH^0)$;
		\item $E: \T \to \bR_+$ satisfies $E(t) = \frac{1}{2} |u_t|_0^2$ a.e. $t \in \T$;
		\item $E(t)$ is a non-increasing function of $t$. In the variational form we write this as $E(0-) = E(0)$ and 
		\begin{align}\label{ineq-energy}
			\left[ E \psi \right]_{t=\tau_1-}^{t=\tau_2+}  - \int_{\tau_1}^{\tau_2} E \partial_t \psi \, dt  + \int_{\tau_1}^{\tau_2}  \psi \int_{\bT^3} |\nabla u_t|^2  \, dx \, dt \leq 0, 
		\end{align}
		for every $0 \leq \tau_1 \leq \tau_2 $ and $\psi \in C_c^1(\T)$ with $\psi \geq 0$. 
		\item 
		the remainder $u^{P,\natural} : \Delta_{\T} \rightarrow \bH^{-3}$ 	which is defined, for all $ \phi\in \bH^{3}$,  and 
		$(s,t)\in \Delta_{\T}$ by 
		\begin{align}
		u_{st}^{P,\natural}(\phi)& :=   \delta u_{st} (\phi ) +  \int_s^t \left[ (\nabla u_r, \nabla   \phi) + B_P(u_r)(\phi) \right]\,dr   -  u_s([A_{st}^{P,1,*} +A_{st}^{P,2,*}]\phi) , \label{SystemSolutionU}
		\end{align}
		satisfy
		\begin{equation} \label{SystemSolutionRemainder}
		u^{P,\natural} \in C^{\frac{p}{3}-\textnormal{var}}_{2, \varpi,L,\loc}(\T; \bH^{-3}) ,
		\end{equation}
		for some control $\varpi$ and $L> 0$. 
	\end{enumerate}
\end{Definition}
The next result gives existence of a  weak solution to \eqref{NSDiffFormSystem_u} for any initial data and a rough transport perturbation. Even though the energy inequality \eqref{ineq-energy} was not included in the corresponding definition of weak solution in \cite{Hofmanova_etal_2019}, it can be verified that it is satisfied  by  the solutions constructed in \cite[Theorem 2.13]{Hofmanova_etal_2019}. The necessary ideas are also discussed in the proof of stability in Theorem~\ref{thm-SeqStabilitywrtZandD} below.

\begin{Theorem}\cite[Theorem 2.13]{Hofmanova_etal_2019}\label{thm_existence}
	For a given initial data $[u_0,E_0] \in \bD$, a geometric $\alpha$-H\"older rough path  $\bZ$, for some $\alpha \in \left(\frac{1}{3}, \frac{1}{2} \right]$, there exists a  weak solution to \eqref{NSDiffFormSystem_u}, in the sense of Definition~\ref{def-weaksolution}, which satisfies,
	\begin{equation}\label{est-energy}
	 \frac{1}{2}|u_t|^2_0 + \int_0^t  |\nabla u_r|_0^2\,dr \leq \frac{1}{2} |u_0|_{0}^2 \leq E_0, \qquad \forall t \in \T. 
	\end{equation}
\end{Theorem}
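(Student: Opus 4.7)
The existence part of this statement is essentially Theorem 2.13 of \cite{Hofmanova_etal_2019}, so my plan is to recall the skeleton of the proof from that reference and focus on the new piece, namely verifying the pointwise energy inequality \eqref{ineq-energy}. The general strategy is a Galerkin approximation in space combined with the rough path stability machinery.

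\textbf{Step 1: Galerkin approximation.} Fix a geometric rough path $\bZ$ and an initial datum $[u_0,E_0]\in\bD$. Let $\Pi_n$ denote the Leray projection onto the span of the first $n$ Fourier modes and consider the finite-dimensional ODE
\[
du^n = \Pi_n\bigl[-B_P(u^n) + \Delta u^n\bigr]\,dt + \Pi_n P[(\sigma_k\cdot\nabla)u^n]\,d\mathbf{z}^k_t, \qquad u^n(0)=\Pi_n u_0,
\]
interpreted as a rough ODE driven by $\bZ$; classical Davie/Friz--Victoir theory gives a unique global solution $u^n$. Because $\sigma_k$ is divergence-free and the equation is truncated at the level of the Leray projection, the transport noise contribution is antisymmetric, which yields the pointwise energy identity
\[
\tfrac12|u^n_t|_0^2 + \int_0^t|\nabla u^n_r|_0^2\,dr \;=\; \tfrac12|\Pi_n u_0|_0^2 \;\le\; E_0,\qquad \forall t\ge 0.
\]

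\textbf{Step 2: Uniform estimates and compactness.} From the energy identity I obtain the uniform bounds
\[
\sup_n\Bigl(\|u^n\|_{L^\infty_T\bH^0} + \|u^n\|_{L^2_T\bH^1}\Bigr)<\infty.
\]
The rough sewing lemma applied to the equation satisfied by $u^n$ provides uniform control on the remainders $u^{n,P,\natural}$ in $C^{p/3\text{-var}}_{2,\varpi,L}([0,T];\bH^{-3})$ for $p=1/\alpha$, with the control $\varpi$ and constant $L$ independent of $n$; these are exactly the bounds derived in Section~3 of \cite{Hofmanova_etal_2019} and recalled in Appendix~\ref{appx:apriori_compactness}. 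Combining with the interpolation/compactness tools of that appendix, a diagonal subsequence $u^{n_k}$ converges to some $u$ weakly-$\ast$ in $L^\infty_{\loc}(\T;\bH^0)$, weakly in $L^2_{\loc}(\T;\bH^1)$, and strongly in $L^2_{\loc}(\T;\bL^2)$. Rough-path stability (which is how the convergence of the rough driver terms $A^{P,1}$, $A^{P,2}$ is handled) then allows to pass to the limit in \eqref{SystemSolutionU}, delivering a solution in the sense of items (a), (d) of Definition~\ref{def-weaksolution}, and in particular the bound \eqref{est-energy} by lower semicontinuity of the $\bH^0$-norm and of the $L^2_T\bH^1$-norm under the weak limit.

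\textbf{Step 3: Construction of $E$ and verification of \eqref{ineq-energy}.} This is the genuinely new step and is the main obstacle. Define the candidate energy as the right-continuous non-increasing envelope
\[
E(t) := \inf_{\text{sequences } s_k\downarrow t}\; \liminf_{k\to\infty} \tfrac12|u_{s_k}|_0^2,
\]
or equivalently via a subsequence of the Galerkin energies $E^n(t):=\tfrac12|u^n_t|_0^2 + \int_0^t|\nabla u^n_r|_0^2 dr$, each of which is non-increasing. Using Helly's selection theorem on $E^n$ (which are monotone and uniformly bounded), extract a further subsequence converging pointwise to a non-increasing limit $\widetilde E$, and set $E(t):=\widetilde E(t)-\int_0^t|\nabla u_r|_0^2dr$ (after checking that the dissipation term converges appropriately). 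Because $u^{n_k}\to u$ strongly in $L^2_{\loc}(\T;\bL^2)$, we have $\tfrac12|u_t|_0^2 = E(t)$ for almost every $t$, giving item (b). The non-increasing property of each $E^n$ translates, after passing to the limit in the integrated version
\[
\bigl[E^n\psi\bigr]_{\tau_1-}^{\tau_2+} - \int_{\tau_1}^{\tau_2}E^n\partial_t\psi\,dt + \int_{\tau_1}^{\tau_2}\psi|\nabla u^n|_0^2\,dt = 0,
\]
into the required inequality \eqref{ineq-energy} for $E$, using weak lower semicontinuity of the dissipation term on the right and monotone convergence on the left.

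\textbf{Main obstacles.} The delicate point is Step~3: ensuring that the monotone limit $\widetilde E$ is compatible with $\tfrac12|u|_0^2$ in the sense of Definition~\ref{def-weaksolution}(b), and in particular establishing $E(0-)=E(0)$. Another subtlety is that one must pass to the limit in the energy equality on arbitrary intervals $[\tau_1,\tau_2]$ while simultaneously tracking both the test function $\psi$ and the jumps at the endpoints, for which the right/left limits of the non-increasing $E^n$ must be handled via a careful Helly-type argument. The rough path pieces themselves cause no trouble in the energy balance because, by the geometricity assumption on $\bZ$ and the divergence-free property of the $\sigma_k$, the transport terms contribute zero to the energy dynamics at the Galerkin level; this is ultimately what makes the whole strategy work.
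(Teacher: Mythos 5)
The paper's own ``proof'' of this statement is essentially a citation of Theorem~2.13 of \cite{Hofmanova_etal_2019} together with the remark that the energy inequality \eqref{ineq-energy} can be verified for the solutions produced there, the relevant argument being contained in the proof of Theorem~\ref{thm-SeqStabilitywrtZandD}. Your Steps~1 and~2 are consistent with that reference's Galerkin-plus-compactness scheme. The problem is in Step~3: the quantities $E^n(t) := \tfrac12|u^n_t|_0^2 + \int_0^t|\nabla u^n_r|_0^2\,dr$ are not merely non-increasing at the Galerkin level --- by the very energy identity you record in Step~1 they are \emph{constant}, equal to $\tfrac12|\Pi_n u_0|_0^2$. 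Applying Helly to a sequence of constants produces $\widetilde E \equiv \tfrac12|u_0|_0^2$, so your recipe $E(t) := \widetilde E(t) - \int_0^t|\nabla u_r|_0^2\,dr$ yields $E(t) = \tfrac12|u_0|_0^2 - \int_0^t|\nabla u_r|_0^2\,dr$. This satisfies only $E(t) \geq \tfrac12|u_t|_0^2$: since $\int_0^t|\nabla u_r|_0^2\,dr$ is merely weakly lower semicontinuous under the weak $L^2_T\bH^1$-limit, an energy defect is not ruled out, and when one occurs item~(b) of Definition~\ref{def-weaksolution} --- $E(t) = \tfrac12|u_t|_0^2$ a.e.\ --- fails for your candidate $E$.

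The fix, which is exactly the argument carried out in the paper's proof of Theorem~\ref{thm-SeqStabilitywrtZandD} (there stated for sequences of weak solutions, but transferring verbatim to the Galerkin approximations), is to apply Helly to the genuinely non-increasing kinetic energies $t\mapsto \tfrac12|u^n_t|_0^2$ (each bounded by $E_0$), extract a pointwise monotone limit $E$, and then use the strong convergence $u^n\to u$ in $L^2_T\bH^0$ --- which gives $\tfrac12|u^n_t|_0^2\to\tfrac12|u_t|_0^2$ for a.e.\ $t$ along a further subsequence --- to identify $E(t)=\tfrac12|u_t|_0^2$ a.e.\ directly. With that correction, the passage to the limit in the Galerkin energy balance plus weak lower semicontinuity of the dissipation does give \eqref{ineq-energy}, as you outline. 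A secondary point: your opening ``envelope'' formula $E(t)=\liminf_{s\downarrow t}\tfrac12|u_s|_0^2$ is not obviously equivalent to the Helly construction; $\tfrac12|u_t|_0^2$ is only weakly lower semicontinuous, not known to be non-increasing, so this envelope need not be monotone, and it is the Helly route that should be used.
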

\begin{Remark}
	Given $[u_0, E_0] \in \bD $ and a geometric $\alpha$-H\"older rough path $\bZ$, if $[u,E]$  is a weak solution of \eqref{NSDiffFormSystem_u}, then we can always consider that $\frac{1}{2} |u_t|_0^2 \le E(t-)$ for all $t \in \T$. Moreover, we will  write $\{[u,E](t); t \in \T \}$ and $\{[u_t,E(t-)]; t \in \T \}$ instead $[u,E]$ if we want to give information about the time scale. 
\end{Remark}

\section{Semiflow Selection}\label{sec:semiflow}

Throughout this section, we continue our pathwise analysis and let
\begin{equation*}
{\bf Z}  = (Z, \mathbb{Z}) \in \mathcal{C}^{\alpha}_{g,\loc}(\T;\bR^K)\,, \quad \alpha \in  \left( \frac{1}{3},\frac{1}{2}\right]\,,
\end{equation*}
be a continuous geometric $\alpha$-H\"older rough path. Randomness is only going to reappear in Section~\ref{sec:rds} below.
We consider the following separable metric space as the trajectory space 
\begin{equation}
\bX := C_{\loc}(\T; \bH^{-1}) \times L_{\loc}^1(\T; \bR). 
\end{equation}
For  data $[x, e] \in \bD$, we introduce the solution set
\begin{equation*} 
\begin{split}
\clU &[s,x, e, \bZ] : =
\left\{ [u, E] \in \bX \ \Big| \begin{array}{c}
[u, E] \ \mbox{is a weak solution to \eqref{NSDiffFormSystem_u} perturbed by $\bZ$} \\ \mbox{with initial time $s$ and initial data } [x, e]\end{array} \right\}.
\end{split}
\end{equation*}
If $s=0$ then to shorten the notation we write $\clU [x, e, \bZ]$ instead of $\clU[0,x, e, \bZ]$.

In order to fulfill the criterion of maximal energy dissipation, and following \cite{Breit_etal_2020}, for a fixed initial data and a rough path, we focus on a subclass of weak solutions consisting of the ones which minimize the total energy. To define this subclass we introduce a partial relation $\prec$ as follows:
if $[u^i, E^i]$, $i=1,2$, are two weak solutions to \eqref{NSDiffFormSystem_u} perturbed by the same rough path $\bZ$ and starting from the same initial data $[u_0, E_0]$, we write $[u^1, E^1] \prec [u^2, E^2]$ iff  
\begin{equation*} 
\ E^{1}(t\pm) \leq E^{2}(t\pm)
\ \mbox{for every}\ t \in \T \setminus \{0\}.
\end{equation*}

\begin{Definition}[Admissible weak solution] \label{defn-admissible}	
	We say that a weak solution $[u, E]$ to \eqref{NSDiffFormSystem_u} perturbed by  $\bZ$ starting from the initial data $[u_0, E_0]$ is \emph{admissible} if
	it is minimal with respect to the relation $\prec$. Specifically, if
	\[
	[\tilde{u}, \tilde{E}] \prec [u, E],
	\]
	where $[\tilde{u}, \tilde{E}]$ is another weak solution to \eqref{NSDiffFormSystem_u} driven by  path $\bZ$ and starting from $[u_0, E_0]$, then
	\[
	E = \tilde E  \ \mbox{on}\ \T.
	\]
\end{Definition}

We can now define a semiflow selection to \eqref{NSDiffFormSystem_u}.
\begin{Definition}[Semiflow selection] \label{defn-semiflowSelection}
	 A \emph{semiflow selection} in the class of weak solutions for the problem \eqref{NSDiffFormSystem_u} is a Borel measurable mapping
	\[
	U: \bD \times \mathcal{C}^{\alpha}_{g,\loc}(\T;\bR^K) \to \bX,\quad
	U\left\{ u_0, E_0 ,\bZ\right\} \in \clU [u_0, E_0,\bZ] \ \mbox{for any}\
	[u_0, E_0,\bZ] \in \bD \times \mathcal{C}^{\alpha}_{g,\loc}(\T;\bR^K) ,
	\]
	which enjoys the following \textbf{semigroup property}:
	\[
	U \left\{ u_0, E_0,\bZ \right\} (t_1 + t_2) =
	U \left\{ U\{u_{0},E_{0},\bZ\}(t_{1}) ,\tilde{\bZ}_{t_1} \right\}(t_2),
	\]
	for any $[u_0, E_0] \in \bD$ and any $t_1, t_2\in \T $, where  $\tilde{\bZ}_{t_1}(\cdot) := \bZ(t_1+\cdot)=(Z,\mathbb{Z})_{t_{1}+\cdot,t_{1}+\cdot}$.	
\end{Definition}

Observe that $\tilde{\bZ}_{t_1}(\cdot)$ defined above is again a rough path, in particular,  Chen's relation holds true.

\subsection{Sequential stability}\label{sec:SeqStability}

In this subsection we address the issue of sequential stability which will allow us to show the compactness of the set $\clU[{u_0,E_0,\bZ}]$ as well as the required measurability of the semiflow selection. It is also essential for proving the measurability of the random dynamical system constructed in Section~\ref{sec:rds}.

Given $T>0$, we let $\Delta_T := \Delta_{[0,T]}$ and $\Delta^{(2)}_T=\Delta^{(2)}_{[0,T]}$.

\begin{Theorem} \label{thm-SeqStabilitywrtZandD}
	Let $\{ \bZ^N = (Z^N, \Z^N) \}_{N \in \bN}$ be a sequence of geometric $\alpha$-H\"older rough paths such that $\bZ^N$ converges to some  $\alpha$-H\"older rough path $\bZ = (Z,\Z)$ in the product topology on $C_{2,\rm loc}^{\alpha}(\T; \bR^K) \times C_{2,\rm loc}^{2\alpha}(\T; \bR^{K\times K})$. Suppose that  $\{ [u_{0}^{N}, E_{0}^{N}] \}_{N \in \bN} \subset \bD$  is a sequence  of initial data and that there exists a positive real number $\clE$, such that 
	\begin{equation}\label{rp-bounds}
		E_{0}^{N} \leq \clE ,
	\end{equation}
	for every $N \in \bN.$
	 Let $ [u^{N}, E^{N}]   \in  \clU [u_{0}^{N}, E_{0}^{N}, \bZ^N ]$, $N\in\bN$, be a family of associated weak solutions.  Then 
	\begin{enumerate}
		\item there exist $[u_0,E_0] \in \bD$ and a subsequence, indexed again by $N$, such that
		\begin{equation} \label{ES1}
		u_{0}^{N} \to u_0 \ \mbox{weakly in}\ \bH^0, \qquad E_{0}^{N} \to E_0. 
		\end{equation}
		\item for the subsequence of solutions $\{ [u^{N}, E^{N}] \}_{N \in \bN} $, corresponding to the data $\{ [u_{0}^{N}, E_{0}^{N}] , \bZ^N \}_{N \in \bN} $ from part (a),  there exists a weak solution $[u,E] $ such that the following hold
		\[
		\begin{split}
		u^{N} &\to u \ \mbox{in}\ C_{\rm{loc}}(\T; \bH^{-1}),\\
		E^{N}(t) &\to E(t) \ \mbox{for any } t \in  \T \ \mbox{and in}\ L_{\rm{loc}}^1(\T; \bR).
		\end{split}
		\]
	\end{enumerate}
\end{Theorem}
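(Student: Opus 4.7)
The bound $E_0^N\le\mathcal{E}$ forces $\tfrac12|u_0^N|_0^2\le\mathcal{E}$, so $\{u_0^N\}$ is norm-bounded in the Hilbert space $\bH^0$. Banach--Alaoglu yields a weakly convergent subsequence $u_0^N\rightharpoonup u_0$, and a further diagonal extraction gives $E_0^N\to E_0$ in $\bR$. Weak lower semicontinuity of $|\cdot|_0^2$ gives $\tfrac12|u_0|_0^2\le E_0$, hence $[u_0,E_0]\in\bD$. Next, from \eqref{est-energy} and $E_0^N\le\mathcal{E}$ I obtain, uniformly in $N$ and on every $[0,T]$, bounds on $u^N$ in $L^\infty_T\bH^0\cap L^2_T\bH^1$. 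The convergence $\bZ^N\to\bZ$ provides a uniform control $\omega_A$ for the unbounded rough drivers $\bA^{N,P}=(A^{N,P,1},A^{N,P,2})$ in the sense of Definition \ref{def:urd}, and feeding the a priori estimates of the Appendix (taken from \cite[Section 3]{Hofmanova_etal_2019}) yields a uniform $\frac{p}{3}$-variation bound on the remainders $u^{N,P,\natural}$ with a common control $\varpi$ and threshold $L$.

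\textbf{Compactness.} From the expansion \eqref{SystemSolutionU} applied between $s$ and $t$, every piece on the right-hand side admits a uniform modulus of continuity in $\bH^{-3}$: the viscous term is Lipschitz in time into $\bH^{-1}$ via the $L^2\bH^1$ bound, the convective term is controlled by the trilinear estimate \eqref{trilinear form estimate}, the rough-driver contribution is $O(\omega_A(s,t)^\alpha)$ by \eqref{ineq:UBRcontrolestimates}, and the remainder is $O(\varpi(s,t)^{1/3})$ on intervals where $\varpi\le L$. Hence $\{u^N\}$ is equicontinuous in $C_{\rm loc}(\T;\bH^{-3})$, while being uniformly bounded in $\bH^0$; since the embedding $\bH^0\hookrightarrow\bH^{-1}$ is compact, an Arzel\`a--Ascoli argument (using the interpolation/compactness lemma promised in the Appendix) extracts a subsequence with $u^N\to u$ in $C_{\rm loc}(\T;\bH^{-1})$. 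A standard Aubin--Lions upgrade using the $L^2_T\bH^1$ bound gives in addition $u^N\to u$ strongly in $L^2_{\rm loc}(\T;\bL^2)$, which is what the convective term demands. For the energies: each $E^N$ is non-increasing and bounded by $\mathcal{E}$, so Helly's selection theorem produces a non-increasing limit $E$ with $E^N(t)\to E(t)$ outside an at-most countable set, and $L^1_{\rm loc}$ convergence follows by dominated convergence. The identity $E(t)=\tfrac12|u_t|_0^2$ a.e.\ follows by combining pointwise a.e.\ convergence of $E^N$ with weak-$\bH^0$ convergence of $u^N_t$ on the full-measure set where $E^N(t)=\tfrac12|u_t^N|_0^2$, using lower semicontinuity and the fact that equality persists a.e.\ in the limit thanks to the energy inequality on both sides.

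\textbf{Passing to the limit in the rough equation and energy inequality.} Testing \eqref{SystemSolutionU} against a smooth divergence-free $\phi$, the viscous term passes by weak convergence of $\nabla u^N$ in $L^2\bL^2$, the nonlinearity $B_P(u^N)(\phi)$ passes using the strong $L^2_{\rm loc}\bL^2$ convergence together with \eqref{trilinear form estimate}, and $\delta u^N_{st}(\phi)\to\delta u_{st}(\phi)$ holds pointwise by $C_{\rm loc}(\T;\bH^{-1})$ convergence. For the rough-driver term $u^N_s([A^{N,P,1,*}_{st}+A^{N,P,2,*}_{st}]\phi)$, convergence of $\bZ^N\to\bZ$ in the product topology yields, for each fixed $\phi\in\bH^3$ and each fixed $(s,t)$, strong convergence of $[A^{N,P,1,*}_{st}+A^{N,P,2,*}_{st}]\phi$ in $\bH^{-\beta}$ for any $\beta\in(0,2]$ thanks to the bounds \eqref{A12bound} and the explicit dependence on $Z^N,\Z^N$; pairing with $u^N_s\to u_s$ in $\bH^{-1}$ (or weakly in $\bH^0$) then passes to the limit. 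Defining $u^{P,\natural}$ via \eqref{SystemSolutionU} with the obtained limit $(u,\bZ)$, the required $\frac{p}{3}$-variation bound \eqref{SystemSolutionRemainder} is inherited by lower semicontinuity of the variation norm under pointwise convergence on $\Delta_T$. For the energy inequality \eqref{ineq-energy}, the integral $\int_{\tau_1}^{\tau_2}\psi\,|\nabla u^N|_0^2\,dt$ is weakly lower semicontinuous, the integral $\int_{\tau_1}^{\tau_2}E^N\partial_t\psi\,dt$ converges by $L^1_{\rm loc}$ convergence of $E^N$, and the boundary traces $E^N(\tau_i\pm)$ pass to the corresponding one-sided limits of the non-increasing $E$ after choosing $\tau_1,\tau_2$ outside the countable jump set of $E$ and then extending by monotonicity.

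\textbf{Main obstacle.} The genuinely delicate point is simultaneously passing to the limit in the rough driver pairing $u^N_s([A^{N,P,1,*}_{st}+A^{N,P,2,*}_{st}]\phi)$ at \emph{every} $s\in\T$ rather than almost every $s$, because this is what is required to retain the rough formulation on all of $\Delta_\T$ and, subsequently, to preserve the identity $E(t)=\tfrac12|u_t|_0^2$ on the appropriate set and guarantee admissibility downstream. Handling this cleanly requires exploiting the fact that $u^N\to u$ in $C_{\rm loc}(\T;\bH^{-1})$ is pointwise, not merely a.e., and that $A^{N,P,i,*}_{st}\phi$ converges in a space dual to $\bH^{-1}$ (namely in $\bH^1$ after absorbing the loss in \eqref{A12bound}) once $\phi$ is chosen smooth enough; verifying this joint convergence together with stability of the $\frac{p}{3}$-variation control for the remainder is the technical core of the argument and is where the uniform Appendix estimates enter decisively.
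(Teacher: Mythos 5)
Your proposal follows the same overall strategy as the paper (uniform energy bounds $\Rightarrow$ compactness of $u^N$ in $C_{\rm loc}(\T;\bH^{-1})\cap L^2_{\rm loc}\bH^0$, term-by-term passage to the limit in the rough equation, Helly for $E^N$), but the compactness step is obtained by a genuinely different route. The paper decomposes $\delta u^N_{st}(\phi)=\delta u^N_{st}(J^\eta\phi)+\delta u^N_{st}((I-J^\eta)\phi)$, estimates each piece with the smoothing operators of Section~\ref{sec:SmoothingOper}, and then optimizes over $\eta$ to produce a modulus of continuity directly in $\bH^{-1}$, which is exactly what Lemma~\ref{CompactnessLemma} requires. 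You instead estimate each term of \eqref{SystemSolutionU} in $\bH^{-3}$ to get equicontinuity there and then interpolate against the uniform $L^\infty_T\bH^0$ bound, writing $|\delta u^N_{st}|_{-1}\lesssim|\delta u^N_{st}|_{-3}^{1/3}|\delta u^N_{st}|_0^{2/3}\lesssim\sqrt[3]{\mathcal{E}}\,|\delta u^N_{st}|_{-3}^{1/3}$. This is valid and slightly more elementary; the paper's smoothing-operator route is sharper (and consistent with how the a priori remainder estimates are derived elsewhere) but not strictly necessary for compactness. What you lose by interpolation is a worse H\"older exponent, which is harmless since Lemma~\ref{CompactnessLemma} only needs a positive power of a control.

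There is one step in your write-up that does not stand as stated: the identification $E(t)=\tfrac12|u_t|_0^2$ a.e. You propose to deduce it from pointwise a.e. convergence of $E^N$ combined with weak $\bH^0$ convergence of $u^N_t$ and lower semicontinuity. That chain only delivers $\tfrac12|u_t|_0^2\le \liminf_N\tfrac12|u^N_t|_0^2=E(t)$; the appeal to ``the energy inequality on both sides'' cannot upgrade this to equality because the energy inequality provides no lower bound for $\tfrac12|u_t|_0^2$ at a later time. The correct ingredient is the strong convergence $u^N\to u$ in $L^2_{\rm loc}(\T;\bH^0)$, which you have already established in the compactness step and which is precisely what the paper uses: strong $L^2_T\bH^0$ convergence forces $\tfrac12|u^N_t|_0^2\to\tfrac12|u_t|_0^2$ for a.e.\ $t$ along a subsequence, and this together with $E^N(t)\to E(t)$ a.e.\ yields the identity. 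So this is a local inconsistency rather than a missing idea: you identified the right convergence but then reached for a weaker mode of convergence in the identification. With that repaired, and with the minor clarification that the uniform $\tfrac p3$-variation control for $u^{N,P,\natural}$ comes from Lemma~\ref{Thm2.5} applied with the common control $\omega_{A^N}=M^{1/\alpha}\omega_Z$ built from $\sup_N|||\bZ^N|||_{\alpha,[0,T]}$, your argument matches the paper's.
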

\begin{proof}[\textbf{Proof of Theorem \ref{thm-SeqStabilitywrtZandD}}]
	First observe that the convergences in \eqref{ES1} follow immediately from the fact that $E_{0}^{N}$, in particular $|u_0^N|_0^2$, are uniformly bounded in $N$ by $\clE$. 
	 Let us fix an arbitrary time $T >0$. Notice that to prove the Theorem \ref{thm-SeqStabilitywrtZandD}, it is sufficient to prove all the required results on $[0,T]$. 

	Observe that due to convergence $\bZ^N \to \bZ$ in the mentioned product topology, for every $\varepsilon >0$, there exists an $N_0 := N_0(\varepsilon) \in \bN$ such that  \begin{align*}
	|||\bZ^N - \bZ|||_{\alpha,[0,T]} < \varepsilon , \quad \textrm{ for all } \quad N  \ge N_0. 
	\end{align*}
	Consequently, the reverse triangle inequality yields,  for all $N \ge N_0$, 
	\begin{align*}
	|||\bZ^N |||_{\alpha,[0,T]} < \varepsilon + |||\bZ |||_\alpha. 
	\end{align*}
	Since the above holds for every $\varepsilon >0$, we fix $\varepsilon =1$ and get
	\begin{align*}
	& |||\bZ^N |||_{\alpha,[0,T]}   \le \max\{ |||\bZ |||_{\alpha,[0,T]} +1, |||\bZ^1 |||_{\alpha,[0,T]}, \cdots, |||\bZ^{N_0} |||_{\alpha,[0,T]} \} =: R. 
	\end{align*}
	Let us set 
	\begin{equation}\label{ZcontrolDefn}
		\omega_Z(s,t) := (t-s)R^{1/\alpha}, \qquad (s,t) \in \Delta_T. 
	\end{equation}
	Then, it is easy to show that $\omega_Z$ is a control and we have
	\begin{align}\label{eqn-rp_bound}
	& |Z_{st}^N| \le (\omega_Z(s,t))^{\alpha}, \qquad |\Z_{st}^N|  \le (\omega_Z(s,t))^{2\alpha}, \quad \forall (s,t) \in \Delta_T.  
	\end{align}	

	\noindent To move further, let us define 
	\begin{align}\label{UnboundedOperSeq}
		& A^{N,1}_{st} \phi :=  P  \left[(\sigma_k \cdot \nabla) \phi \right]  Z_{st}^{N,k},  \nonumber\\
		& A^{N,2}_{st} \phi  :=  P  \left[ (\sigma_k \cdot  \nabla) P [ (\sigma_j \cdot  \nabla)  \phi ] \right] \mathbb{Z}_{st}^{N, j,k}.
	\end{align} 
	Next we claim that, for $\beta \in [0,2]$,  
	\begin{align}
		& |A^{N,1}_{st}  |_{\mathcal{L}(\bH^{\beta+1}, \bH^\beta)} \le M  (\omega_Z(s,t))^\alpha,  \quad \textrm{ for } \beta \in [0,2], \label{A1Nbound}\\
		& |A^{N,2}_{st}  |_{\mathcal{L}(\bH^{\beta+2}, \bH^\beta)}  \le M (\omega_Z(s,t))^{2\alpha},  \quad \textrm{ for } \beta \in [0,1], \label{A2Nbound}
	\end{align}
	where $M$ was introduced in Section~\ref{sec:HelmholtzLeray_projection}.
We will only prove \eqref{A1Nbound}, since the proof of \eqref{A2Nbound} is similar. Observe that, for $\beta \in [0,2]$, estimates \eqref{eqn-rp_bound} and  \eqref{A12bound-1} give 
	\begin{align*}
	|A^{N,1}_{st}  |_{\mathcal{L}(\bH^{\beta+1}, \bH^\beta)} &  \le |P \clA^1|_{\clL(\bH^{\beta+1},\bH^\beta)}   |Z_{st}^{N}|    \le  M   (\omega_Z(s,t))^\alpha. 
	\end{align*}
	Similarly we can get the  inequality in \eqref{A2Nbound}.  
	Hence, by Definition \ref{def:urd}, the $\{ (A^{N,1}_{st} , A^{N,2}_{st} ) \}_N$ is a family of unbounded rough drivers, with $$\omega_{A^N}(s,t) := M^{1/\alpha}  \omega_Z(s,t),$$ on the scale $(\bH^\beta)_{\beta \in \bR_+}$ which is uniformly bounded in $N$.

	Now, we  prove the convergences in part (b). The case when the sequence of rough paths does not depend on $N$ is proven \cite[Theorem~ 4.1]{Hofmanova_etal_2019}.  We still include the  whole idea here with more details for the completion.  
	
	First observe that, without loss of generality, we may assume that the same control $\varpi$ and constant $L>0$ works for each element of sequence $\{u^{N}\}_{N\geq 1}$ in the Definition \ref{def-weaksolution}.
	Since, for every $N \in \bN$,  $\frac{1}{2}|u_0^N|_0^2 \le \clE $ and corresponding $u^N$ satisfies the energy inequality \eqref{est-energy}, we get that the sequence $\{u^{N}\}_{N \geq 1}$ is uniformly bounded in $L^2_T\bH^1\cap L^{\infty}_T\bH^0$, an application of Banach-Alaoglu theorem yields a subsequence, which we will index again as $\{u^{N}\}_{N\geq 1}$, that converges   weakly in $L^2_T\bH^1$ and weak-* in $L^{\infty}_T\bH^0$.

	To obtain a further subsequence that converges strongly in $L_T^{2}\bH^0 \cap C_T\bH^{-1}$, thanks to Lemma~\ref{CompactnessLemma}, it is sufficient to show that there exist controls $\omega$ and $\bar{\omega}$ and $\bar{L},\kappa>0$,  independent of $N$, such that  $|\delta u_{st}^{N}|_{-1}\le \omega(s,t)$ for all $(s,t)\in \Delta_T$ with $\bar{\omega}(s,t)\le \bar{L}$.

	Let $\phi \in \bH^1$. Decomposition of  $\delta u^N_{st}$ into a smooth and non-smooth part using $J^{\eta}$ (defined in Section~\ref{sec:SmoothingOper}) for some $\eta\in (0,1]$, yields 
	\begin{equation}\label{SSwrtZandD-1}
	|\delta u^N_{st}(\phi)| \leq |\delta u^N_{st}( J^{\eta} \phi)| + |\delta u^N_{st}( (I - J^{\eta}) \phi)| .
	\end{equation}
	By applying \eqref{smoothingOperator} and \eqref{est-energy} we estimate the second term in above as
	\begin{equation}\label{SSwrtZandD-2}
	|\delta u^N_{st}( (I - J^{\eta}) \phi)| \les  |u^N|_{L_T^\infty\bH^0} |(I - J^{\eta}) \phi|_0 \les \eta |u_0^{N}|_0 |\phi|_1 \leq  \eta \sqrt{\clE }|\phi|_1. 
	\end{equation}
	For the first term on the  right hand side of \eqref{SSwrtZandD-1}, by letting
	$$
	\mu_{t}^N(\phi) :=  - \int_0^t \left[ (\nabla u_r^N,\nabla \phi) +B_P(u_r^N)(\phi) \right]\,dr, \quad \phi \in \bH^1,
	$$ 
	\eqref{SystemSolutionU} gives that for all $(s,t)\in \Delta_T$,  
	\begin{equation}\label{RNS2}
	\delta u_{st}^N =  \delta \mu_{st}^N + A_{st}^{N,1} u_s+ A_{st}^{N,2}u_s + u^{P,\natural,N}_{st},
	\end{equation}
	where the equality holds in $\bH^{-3}$. 
	Consequently, we get
	\begin{equation}\label{SSwrtZandD-3}
	|\delta u^N_{st}( J^{\eta} \phi)| \leq  |u^{P,\natural,N}_{st} ( J^{\eta} \phi)|  + |\delta \mu_{st}^{N} ( J^{\eta} \phi)| + | u_s^{N} (A_{st}^{N,1,\ast}   J^{\eta} \phi) | + |u_s^{N} (A_{st}^{N,2,\ast} J^{\eta} \phi)|. 
	\end{equation}
	We estimate each term of \eqref{SSwrtZandD-3} separately as follows: 
	
	1) By Lemma \ref{Thm2.5}  we infer that there is a positive constant $\tilde{L}$, depending only on $p$ (i.e., independent of $N$),  such that  for all $(s,t)\in \Delta_T$ with  $\varpi(s,t)\le L$ and $ M^{1/\alpha}\omega_Z(s,t) = \omega_{A^N}(s,t)\leq \tilde{L}$, the following inequality is true 
	\begin{align} \label{SSwrtZandD-7}
	\omega_{P, \natural,N}(s,t) &  \les_{p} |u^N|^{\frac{p}{3}}_{L^{\infty}_T\bH^0}  \omega_{A^N}(s,t) + ( 1 + |u^N|_{L^{\infty}_T\bH^0} )^{\frac{2p}{3}}(t-s)^{\frac{p}{3}} \omega_{A^N}(s,t)^{\frac{1}{12}} \nonumber\\
	&  \les_{p} M^{1/\alpha} |u^N|^{\frac{p}{3}}_{L^{\infty}_T\bH^0}  \omega_{Z}(s,t) + M^{1/\alpha} ( 1 + |u^N|_{L^{\infty}_T\bH^0} )^{\frac{2p}{3}}(t-s)^{\frac{p}{3}} \omega_{Z}(s,t)^{\frac{1}{12}},
	\end{align}
	where $\omega_{P, \natural,N}(s,t) := |u^{P, \natural,N}|^{\frac{p}{3}}_{\frac{p}{3} -var; [s,t];\bH^{-3}}.$
	Since $u^{P,\natural,N}$ is the remainder, \eqref{SystemSolutionRemainder}  followed by \eqref{smoothingOperator} and \eqref{SSwrtZandD-7} yield
	\begin{align}\label{SSwrtZandD-8}
	|u^{P,\natural,\varepsilon}_{st} ( J^{\eta} \phi)| & \leq \omega_{P,\natural,N}(s,t)^{\frac{3}{p}} |J^{\eta} \phi |_3 \nonumber\\
	&  \les_p \eta^{-2} M^{\frac{3}{p \alpha}} \left[ |u^N|^{\frac{p}{3}}_{L^{\infty}_T\bH^0}  \omega_{Z}(s,t) + ( 1 + |u^N|_{L^{\infty}_T\bH^0} )^{\frac{2p}{3}}(t-s)^{\frac{p}{3}} \omega_{Z}(s,t)^{\frac{1}{12}} \right]^{\frac{3}{p}} |\phi |_1 \nonumber \\
	& \les \eta^{-2} \left[ \sqrt{\clE}  (\omega_{Z}(s,t))^{\frac{3}{p}}  + ( 1 + \clE ) (t-s) (\omega_{Z}(s,t))^{\frac{1}{4p}} \right] |\phi |_1. 
	\end{align}

	2) Let us set $\omega_{\mu^N}(s,t) :=  \int_s^t  (1 + |u_r^N|_1)^2 \, dr$. 	Since $u^N \in L_T^2 \bH^1$, we infer that $\omega_{\mu}$ is a control. Using  \eqref{trilinear form estimate} and the the Cauchy-Schwartz inequality we deduce that 
	\begin{align*}
	|\delta \mu_{st} |_{-3} & \le \sup_{|\phi|_3 \le 1} \int_s^t \left[ | (\nabla u_r,\nabla \phi)| +|B_P(u_r)(\phi)| \right] \,dr  \\
	& \le \sup_{|\phi|_3 \le 1}   | \phi|_3 \int_s^t |\nabla u_r|_0  \,dr +  \int_s^t  |u_r|_1^2 \,dr  \les \int_s^t \left( 1 + 2| u_r|_1 + | u_r|_1^2  \right)  \,dr   = \omega_{\mu}(s,t).
	\end{align*}
	Then, by using  \eqref{smoothingOperator} and \eqref{est-energy} we obtain
	\begin{align}\label{SSwrtZandD-9}
	|\delta \mu_{st}^{N} ( J^{\eta} \phi)|& \les  \eta^{-2} \omega_{\mu^N}(s,t) |\phi |_1 = \eta^{-2} \left[  \int_s^t  (1 + |u_r^N|_1)^2 \, dr \right] |\phi |_1 \les \eta^{-2} (t-s) (1+\clE)  |\phi |_1. 
	\end{align}

	3) By applying \eqref{A1Nbound}  followed by  \eqref{smoothingOperator} and \eqref{est-energy} we yield, with $\alpha = \frac{1}{p}$, 
	\begin{align}\label{SSwrtZandD-10}
	| u_s^{N} (A_{st}^{N,1,\ast}   J^{\eta} \phi) |  & \les M  |u^N|_{L_T^\infty\bH^0}  (\omega_{Z}(s,t))^{\frac{1}{p}}   | J^{\eta} \phi|_1 \les   |u_0^N|_0  (\omega_{Z}(s,t))^{\frac{1}{p}}  | \phi |_1 \nonumber \\
	& \leq  \sqrt{\clE}  (\omega_{Z}(s,t))^{\frac{1}{p}}  | \phi |_1.
	\end{align}

	4)  Again by  using  \eqref{A2Nbound} with \eqref{smoothingOperator}  and \eqref{est-energy} we get
	\begin{align}\label{SSwrtZandD-11}
	| u_s^{\varepsilon} (A_{st}^{N,2,\ast}   J^{\eta} \phi) |  & \les M |u^N|_{L_T^\infty\bH^0}  (\omega_{Z}(s,t))^{\frac{2}{p}}   | J^{\eta} \phi|_2 \les \eta^{-1} |u_0^N|_0  (\omega_{Z}(s,t))^{\frac{2}{p}}   | \phi |_1\nonumber  \\
	& \leq  \eta^{-1} \sqrt{\clE } (\omega_{Z}(s,t))^{\frac{2}{p}}   | \phi |_1.  
	\end{align}
	So by substituting \eqref{SSwrtZandD-2}, \eqref{SSwrtZandD-8}-\eqref{SSwrtZandD-11} into \eqref{SSwrtZandD-3}, for each $\phi \in \bH^1$ we have
	\begin{align*}
	|\delta u^N_{st}(\phi)| & \les   \eta^{-2} \left[ \sqrt{\clE}  (\omega_{Z}(s,t))^{\frac{3}{p}}  + ( 1 + \clE ) (t-s) (\omega_{Z}(s,t))^{\frac{1}{4p}} \right] |\phi |_1 + \eta^{-1} (t-s) (1+\clE)  |\phi |_1 \\
	& \quad  +  \sqrt{\clE}  (\omega_{Z}(s,t))^{\frac{1}{p}}  | \phi |_1 + \eta^{-1} \sqrt{\clE } (\omega_{Z}(s,t))^{\frac{2}{p}}   | \phi |_1 + \eta \sqrt{\clE} |\phi|_1.
	\end{align*}
	Let us set $\eta := (\omega_{Z}(s,t))^{\frac{1}{p}} + (t-s)^{\frac{1}{p}} $. Observe that we can choose $M$ such that $\eta \in [0,1)$. Indeed, since $\tilde{L}$  is fixed and $\omega_Z(s,t) $ defines as in \eqref{ZcontrolDefn}, we can choose $M$ large enough such that the inequalities \eqref{A12bound-1}-\eqref{A12bound-2},  the relation  $\omega_Z(s,t) \le  \frac{\tilde{L}}{M^{1/\alpha}} < \frac{1}{2}$ and $(t-s)^{\frac{1}{p}} < \frac{1}{2}$ hold true for all $(s,t)\in \Delta_T$ with $\bar{\omega}(s,t)\le \bar{L}$.

	Consequently, we infer that
	\begin{align}
	|\delta u^N_{st}|_{-1} &  \les_{M, \clE}(1+ |u_0|_0)^2 ( \omega_{Z}(s,t)^{ \frac{1}{p} } + (t-s)^{1 - \frac{2}{p}} )  \label{uniformSolution}  .
	\end{align}
	Since for $p \geq 2$, and $\kappa >0$ 
	\begin{align*}
	\omega_{Z}(s,t)^{ \frac{1}{p} } + (t-s)^{1 - \frac{2}{p}}  \les_{p,\kappa}  \left( \omega_{Z}(s,t)^{ \frac{\kappa}{p} } + (t-s)^{ \kappa \left( 1 - \frac{2}{p}\right)}  \right)^{\frac{1}{\kappa}},
	\end{align*}
	by choosing $\kappa$ which satisfy
	$$ \kappa \geq p \textrm{ and } \kappa \geq \frac{p}{p-2}, $$
	we deduce  that $$ \tilde{\omega}(s,t) :=  \omega_{Z}(s,t)^{ \frac{\kappa}{p} } + (t-s)^{ \kappa \left( 1 - \frac{2}{p}\right)}, $$  is a control. 
	
	Hence, due to Compactness Lemma \ref{CompactnessLemma}, there is a subsequence of  $\{u^N\}_{N\in \bN}$, which we continue to denote by $\{u^N\}_{N \in \bN}$, converging strongly to an element $u $ in $C_T\bH^{-1}\cap L^2_T\bH^0$. 
	Now recall that $\delta u_{st}^{N}  := u_t^{N} -u_s^{N}$  and, from Definition \ref{def-weaksolution},
	\begin{equation} \label{NSRoughFormSystem}
	\begin{aligned}
	\delta u_{st}^{N} +\int_s^t  B_p(u_r^N)\,dr&=\int_s^t \Delta u_r^{N} dr+ [A_{st}^{N,1}+A_{st}^{N,2}]u_s^{N} +u_{st}^{P, \natural,N} ,
	\end{aligned}
	\end{equation}
	where $A^{N,1}_{st}$ and $A^{N,2}_{st}$ are defined as in \eqref{UnboundedOperSeq}. 

	Our goal now is to prove that $u$ is a weak solution to \eqref{NSDiffFormSystem_u}. The idea is to pass the limit in \eqref{NSRoughFormSystem} tested against some $\phi \in \bH^3$ as $N$ tends to $\infty$. 
	
	For the terms with operators $A^{N,i}, i=1,2$,  observe that
	\begin{align}\label{SSwrtZandD-15}
	& |(u^N_s,A_{st}^{N,i,*}\phi) - (u_s,A_{st}^{i,*}\phi)|  \leq | u^N_s-u_s |_{-1} |A_{st}^{N,i,*}\phi |_{1} + | u^N_s-u_s|_0 |(A_{st}^{N,i,*} - A_{st}^{i,*}) \phi |_0. 
	\end{align}
	The first term in the r.h.s of \eqref{SSwrtZandD-15} goes to $0$ as $N \to \infty$ because $ u^N \to u$ in $C_T\bH^{-1}$.  To estimate the second term in \eqref{SSwrtZandD-15} we proceed as follows: bound \eqref{A12bound-1} yield
	\begin{align}\label{SSwrtZandD-16}
	& |(A_{st}^{N,1,*} - A_{st}^{1,*}) \phi |_0   \leq  |P[\sigma_k \cdot \nabla]|_{\mathcal{L}(\bH^1,\bH^0)} |\phi|_1  |Z_{st}^{N,k} - Z_{st}^{k} |  \les_M  |\phi|_1  |Z_{st}^{N} - Z_{st} |.
	\end{align}
	Similarly, the estimate \eqref{A12bound-2}  gives
	\begin{align}\label{SSwrtZandD-17}
		& |(A_{st}^{N,2,*} - A_{st}^{2,*}) \phi |_0   \leq   |P[ (\sigma_k \cdot \nabla) P[\sigma_j \cdot \nabla]]|_{\mathcal{L}(\bH^2,\bH^0)} |\phi|_2  | \Z_{st}^{N,i,k} - \Z_{st}^{i,k} |  \les_{M}  |\phi|_2  |\Z_{st}^{N} - \Z_{st} |.
	\end{align}
	So, since $Z^N \to Z$ in $C^{\alpha}_{2,\loc}(\T; \bR^K) $ and $\Z^N \to \Z$ in $ C^{2\alpha}_{2,\loc}(\T;\bR^{K \times K})$, from 	\eqref{SSwrtZandD-16}-\eqref{SSwrtZandD-17} we infer that, for $i=1,2$, 
	\begin{equation}\label{SSwrtZandD-18}
		|(u^N_s,A_{st}^{N,i,*}\phi) - (u_s,A_{st}^{i,*}\phi)|  \to 0 \textrm{ as } N \to \infty. 
	\end{equation}
	Further, using the  H\"older inequality, the strong convergence in $L_T^2\bH^0$ of $\{u^N\}_{\varepsilon >0}$ and trilinear estimate \eqref{trilinear form estimate},  we find
	\begin{align*}
	& \left|\int_s^t \left[B_P(u_r)(\phi) - B_P(u_r^N) (\phi) \right]\,dr \right| \\
	&  \leq \left| \int_s^t B_P(u_r - u_r^N,u_r)(\phi)\,dr \right| 	+ \left|\int_s^t  B_P(u_r^N,u_r- u_r^N) (\phi) \, dr \right| \\
	&  \les  \int_s^t |u_r - u_r^N|_0 |u_r|_0 \, dr |\phi|_3    +  \int_s^t |u_r - u_r^N|_0 |u^N_r|_0 \, dr |\phi|_3 \\
	&  \leq  |\phi|_3 \left( \int_s^t |u_r - u_r^N|_0^2 \, dr  \right)^{1/2} \left[ \left( \int_s^t |u_r|_0^2 \, dr  \right)^{1/2}     + \left(   \int_s^t |u_r^N|_0^2 \, dr  \right)^{1/2} \right]   \rightarrow 0,
	\end{align*}
	as $N \to \infty$.
	Finally, using the  H\"older inequality, the strong convergence in $L_T^2\bH^0$ of $\{u^N\}_{\varepsilon >0}$ we have 
	\begin{align*}
	& \left|\int_s^t [\Delta u_r^N - \Delta u_r ] (\phi) \,dr \right|  \leq   |\Delta \phi|_0 \int_s^t |u_r^N - u_r|_0  \, dr \le  |\Delta \phi|_0  \left( \int_s^t |u_r - u_r^N|_0^2 \, dr  \right)^{1/2} |t-s|^{1/2}	 \rightarrow 0,
	\end{align*}
	as $N \to \infty$.

	Hence, since we have shown that all of the terms in equation \eqref{NSRoughFormSystem} converge when applied to $\phi$,  the remainder $u^{P, \natural,N}_{st}(\phi)$ converges to some limit $u^{P, \natural}_{st}(\phi)$. Since $|u^{P, \natural}|^{\frac{p}{3}}_{\frac{p}{3} -var; [s,t];\bH^{-3}}$ is equal to the infimum over all controls satisfying $|u_{st}^{P, \natural }|_{-3} \leq \omega_{P, \natural }(s,t)^{\frac{3}{p}}$, by above convergence results and \eqref{SSwrtZandD-7} we obtain
	\begin{align*}
	|u_{st}^{P, \natural }|_{-3} &  \leq \sup_{|\phi|_{3} \leq 1}  |(u^{P,\natural}_{st}-u^{P,\natural,N}_{st})(\phi)|  + \sup_{|\phi|_{3} \leq 1}  |u^{P,\natural,N}_{st}(\phi)|,
	\end{align*}
	where, as  in \eqref{SSwrtZandD-8},
	\begin{align*}
	|u^{P,\natural,N}_{st}(\phi)| &   \les \left[ \clE  (\omega_{Z}(s,t))^{\frac{3}{p}}  + ( 1 + \clE ) (t-s) (\omega_{Z}(s,t))^{\frac{1}{4p}} \right]   | \phi |_3.
	\end{align*}
	So, by taking the limit $N \to \infty$ we get that
	\begin{align*}
	|u_{st}^{P, \natural }|_{-3} &   \les \left[ \clE  (\omega_{Z}(s,t))^{\frac{3}{p}}  + ( 1 + \clE ) (t-s) (\omega_{Z}(s,t))^{\frac{1}{4p}} \right]. 
	\end{align*}
	 Hence, $ u^{P,\natural} \in C^{\frac{p}{3}-\textnormal{var}}_{2, \varpi,L}([0,T]; \bH^{-3})$ for some control $\varpi$ depending only on $\omega_Z$ and $L> 0$ depending only on $p$.

	Next, we prove that $u \in C_T\bH_w^0$. Recall that $u \in L_T^\infty\bH^0 \cap C_T\bH^{-1}$. Let $\phi \in \bH^0$. Since $\bH^1$ is dense in $\bH^0$, there exists a sequence $\{ \phi_n \}_{n \in \N} \subset \bH^1$ such that $|\phi_n - \phi|_0 \to 0$ as $n  \to \infty$. Then 
	\begin{align*}
	|\langle u_t-u_s, \phi \rangle |_0  \leq  2 |u|_{L_T^\infty\bH^0} | \phi - \phi_n|_0 + |\langle u_t-u_s, \phi_n \rangle |_0 .
	\end{align*}
	Since $u \in C_T\bH^{-1}$ and $\phi_n \in \bH^1$, $|\langle u_t-u_s, \phi_n \rangle |_0  \to 0$ as $s \to t$. Consequently,\begin{equation*}
	\lim\limits_{s \to t}	|\langle u_t-u_s, \phi \rangle |_0  \leq 2 |u|_{L_T^\infty\bH^0} \delta
	\end{equation*}
	for any $\delta >0$. So $\lim\limits_{s \to t} u_s(\phi) = u_t(\phi), \forall \phi \in \bH^0$.

	Now note that, since $E^N$ is non-increasing, for every $T >0$ and any partition  $0 = t_0 < t_1 <\cdots < t_n =T$ we have
	\begin{align*}
	\sum_{i=0}^{n-1} |E^N(t_{i+1}) - E^N(t_{i})|  \leq \clE. 
	\end{align*}
	Consequently, we get that its total variation is uniformly bounded. Hence by the Helly selection theorem there exists a subsequence of $\{ E^N \}_{N \in \bN}$, which we index again by $N$, and a function $E: \T \to \bR_+$ locally of bounded variation such that
	\[
	E^N(t) \to E(t) \ \mbox{for any}\ t \in \T \ \mbox{and in}\ L^{1}_{\rm loc}(0,\infty). 
	\]
	But since $\{u^N \}_{N \in \bN}$ converges strongly to $u$ in $L_T^{2}\bH^0$, we infer that $E(t) = \frac{1}{2} | u_{t} |_0^2$ for a.e. $t \in \T$. 
	
	Hence, $[u,E]$ is a solution of \eqref{NSDiffFormSystem_u} in the sense of Definition \ref{def-weaksolution} and the proof of Theorem \ref{thm-SeqStabilitywrtZandD} completes. 
\end{proof}

\subsection{Shift invariance and continuation  property}\label{sec:ShiftInvariance_Continuation}
Here we prove the remaining two main ingredients for the construction of a semiflow, the shift invariance property and the continuation property of the set of solutions.

For $w \in \bX $, we define the positive shift operator $S_T \circ w$ as 
\[
S_T \circ w(t) := w(T + t),\ t \geq 0.
\]

\begin{Lemma}[Shift invariance property] \label{AL2}
	Let $[u_0, E_0] \in \bD$, $\bZ$ be a geometric $\alpha$-H\"older rough path defined on $\T$, and $[u, E] \in \clU[u_0,  E_0,\bZ]$.
	Then we have
	\[
	S_T \circ [u, E] \in \clU[u (T),  \mathcal{E}, \tilde{\bZ}_T]
	\]
	for any $T > 0$, and any $\mathcal{E} \geq E(T+)$. Here we recall the notation $\tilde{\bZ}_T(t) := \bZ(t+T)$ for $t \geq 0$. 
	
\end{Lemma}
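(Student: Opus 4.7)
Set $\tilde u(t):=u(T+t)$, $\tilde E(t):=E(T+t)$ for $t>0$, and $\tilde E(0):=\mathcal E$. The plan is to verify the four items of Definition~\ref{def-weaksolution} for the pair $[\tilde u,\tilde E]$ with driver $\tilde{\bZ}_T$, and separately to check that the new initial datum lies in $\bD$. For the latter, since $u:\T\to\bH^0$ is weakly continuous, lower semicontinuity of $|\cdot|_0$ under weak convergence gives $\tfrac12|u(T)|_0^2\le\liminf_{t\downarrow T}\tfrac12|u_t|_0^2$; combined with $\tfrac12|u_t|_0^2=E(t)$ a.e.\ and right-continuity of the limit through $E$, this yields $\tfrac12|u(T)|_0^2\le E(T+)\le\mathcal E$, so $[u(T),\mathcal E]\in\bD$.

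Items (a) and (b) of Definition~\ref{def-weaksolution} are immediate: weak continuity, $L^2_{\loc}\bH^1\cap L^\infty_{\loc}\bH^0$-regularity and the a.e.\ identity $\tilde E(t)=\tfrac12|\tilde u_t|_0^2$ are preserved by translation in time (the redefinition at $t=0$ only affects a single point). For (c), the map $\tilde E$ is non-increasing on $(0,\infty)$ since $E$ is, and $\tilde E(0)=\mathcal E\ge E(T+)=\tilde E(0+)$ covers the jump at the origin. The variational form \eqref{ineq-energy} for $\tilde E$ on an interval $[\tau_1,\tau_2]$ with $\tau_1>0$ is just a change of variables $r=T+t$ in the inequality for $E$ on $[T+\tau_1,T+\tau_2]$. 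When $\tau_1=0$, the convention $\tilde E(0-)=\tilde E(0)=\mathcal E$ together with $\mathcal E\ge E(T+)$ upgrades the inequality that is already valid for $E$ starting from $E(T+)$.

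Item (d) is the only place where the shifted rough path enters and is the main bookkeeping step. Since $\tilde Z_{s,t}=Z_{T+s,T+t}$ and $\tilde{\mathbb Z}_{s,t}=\mathbb Z_{T+s,T+t}$, the associated unbounded rough drivers satisfy
\[
\tilde A^{P,1}_{s,t}=A^{P,1}_{T+s,T+t},\qquad \tilde A^{P,2}_{s,t}=A^{P,2}_{T+s,T+t}.
\]
Defining $\tilde u^{P,\natural}_{s,t}:=u^{P,\natural}_{T+s,T+t}$ and plugging $\tilde u_r=u_{T+r}$ into \eqref{SystemSolutionU} written for $[\tilde u,\tilde{\bZ}_T]$, the change of variables $r'=T+r$ reproduces \eqref{SystemSolutionU} for $u$ on $[T+s,T+t]$, which holds by hypothesis. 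For the remainder regularity, if $\varpi$ and $L$ are the control and threshold in \eqref{SystemSolutionRemainder} for $u^{P,\natural}$ on $[0,T+J]$, then $\tilde\varpi(s,t):=\varpi(T+s,T+t)$ is again a control (superadditivity and vanishing on the diagonal are preserved by translation), and the same bound transfers, so $\tilde u^{P,\natural}\in C^{p/3\text{-var}}_{2,\tilde\varpi,L,\loc}(\T;\bH^{-3})$.

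The step most likely to cause friction is (c), because of the asymmetry between $\tilde E(0+)\le E(T+)$ and the prescribed value $\tilde E(0)=\mathcal E$ at the origin; but since \eqref{ineq-energy} only demands monotonicity and a one-sided inequality (with the convention $E(0-)=E(0)$), enlarging the initial value from $E(T+)$ to $\mathcal E$ only strengthens, never weakens, the boundary term $[\tilde E\psi]_{0-}^{\tau_2+}$, which is exactly what is needed.
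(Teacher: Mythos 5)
Your proof is correct and follows essentially the same approach as the paper: translate everything by $T$, observe that the shifted pair $(\tilde Z,\tilde{\mathbb Z})$ induces the shifted rough drivers $\tilde A^{P,i}_{s,t}=A^{P,i}_{T+s,T+t}$, set $\tilde u^{P,\natural}_{s,t}:=u^{P,\natural}_{T+s,T+t}$ so that the Davie expansion \eqref{SystemSolutionU} transports under the change of variable $r\mapsto T+r$, and note that the translated control $\tilde\varpi(s,t)=\varpi(T+s,T+t)$ is again a control giving the required $\frac{p}{3}$-variation bound for the remainder. You are in fact somewhat more explicit than the paper at two points: you give the lower-semicontinuity argument yielding $\frac12|u(T)|_0^2\le E(T+)\le\mathcal E$, while the paper only cites \eqref{est-energy}; and you spell out the redefinition $\tilde E(0):=\mathcal E$ and check the boundary case $\tau_1=0$ of \eqref{ineq-energy}, which the paper leaves implicit.
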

\begin{proof}[\textbf{Proof of Lemma \ref{AL2}}]
	Let us fix $T >0$. Based on the definition of $S_T$, we need to show that
	\begin{equation*}
	(S_T \circ [u,E])(t) = \{ [u_{t+T}, E(t+T)]; t \geq 0 \} =: \{[\tilde{u}_t, \tilde{E}(t)]; t \geq 0 \} \in \clU[u(T), \mathcal{E}, \tilde{\bZ}]. 
	\end{equation*}
	Firstly, we observe that since $u \in\clU[u_0,  E_0,\bZ]$ it holds  $\tilde{u} \in L^2_{\loc}(\T;\bH^1 ) \cap L^{\infty}_{\loc}(\T;\bH^0)$.  Next, since $E(t)$ is a non-increasing function of $t$ and satisfies \eqref{ineq-energy}, we have
	\begin{align*}
	\left[ \tilde{E}(t) \psi(t) \right]_{t=\tau_1-}^{t=\tau_2+}  - \int_{\tau_1}^{\tau_2} \tilde{E}(t)\partial_t \psi(t) \, dt + \int_{\tau_1}^{\tau_2}  \psi \int_{\bT^3} |\nabla \tilde{u}_t|^2  \, dx \, dt \leq 0, \quad 0 \leq \tau_1 \leq \tau_2 ,  
	\end{align*}
	for every $\psi \in C_c^1(\T)$ with $\psi \geq 0$.

	Observe that, since  $\mathcal{E} \geq E(T+)$, $[u(T), \mathcal{E} ] \in \bD$ due to \eqref{est-energy}. Moreover,  \eqref{SystemSolutionU} gives	
	\begin{align}
	u_{(s+T)(t+T)}^{P,\natural}(\phi)& =   u_{t+T} (\phi ) - u_{s+T} (\phi )  +  \int_{s+T}^{t+T} \left[ (\nabla u_r, \nabla   \phi) + B_P(u_r)(\phi) \right]\,dr   \nonumber\\
	& \quad -  u_{s+T}([A_{(s+T)(t+T)}^{P,1,*} +A_{(s+T)(t+T)}^{P,2,*}]\phi) \nonumber\\
	& =  \tilde{u}_t (\phi ) - \tilde{u}_s (\phi )  +  \int_{s}^{t} \left[ (\nabla \tilde{u}_{\tilde{r}}, \nabla   \phi) + B_P(\tilde{u}_{\tilde{r}})(\phi) \right]\,d\tilde{r}  \nonumber\\
	& \quad -  \tilde{u}_s([A_{(s+T)(t+T)}^{P,1,*} +A_{(s+T)(t+T)}^{P,2,*}]\phi).
	\end{align}
	But, since under the notation $\tilde{Z}_{st} = Z_{(s+T)(t+T)}$ and  $\tilde{\Z}_{st} = \Z_{(s+T)(t+T)}$, we have
	\begin{align*}
	A_{(s+T)(t+T)}^{P,1}  = P[ (\sigma_k \cdot \nabla) \varphi] \tilde{Z}_{st}^k = \tilde{A}_{st}^{P,1},
	\end{align*}
	and
	\begin{align*}
	A_{(s+T)(t+T)}^{P,2}  & =  P[ (\sigma_k \cdot \nabla) P[ (\sigma_l \cdot \nabla) \varphi] ]\tilde{\Z}_{st}^{l,k} = \tilde{A}_{st}^{P,2}.
	\end{align*}
	Hence, for $(s,t)\in \Delta_T$,  \begin{align}
	u_{(s+T)(t+T)}^{P,\natural}(\phi)&   =  \tilde{u}_t (\phi ) - \tilde{u}_s (\phi )  +  \int_{s}^{t} \left[ (\nabla \tilde{u}_{\tilde{r}}, \nabla   \phi) + B_P(\tilde{u}_{\tilde{r}})(\phi) \right]\,d\tilde{r}  \nonumber\\
	& \quad -  \tilde{u}_s([\tilde{A}_{st}^{P,1,*} +\tilde{A}_{st}^{P,2,*}]\phi) =: \tilde{u}_{st}^{P,\natural}(\phi).
	\end{align}
	To finish the proof of Lemma \ref{AL2}, it remains to show that for every $\tau>0$, $\tilde{u}^{P,\natural} \in C^{\frac{p}{3}-\textnormal{var}}_{2, \varpi,L}([0,\tau]; \bH^{-3})$. But this we have since there exits a control $\tilde{w}_{\natural}$ such that  \begin{equation*}
	\Vert \tilde{u}_{st}^{P,\natural} \Vert_{-3} \leq c (\tilde{w}_{\natural}(s,t))^{\frac{3}{p}}, \quad \forall (s,t)\in \Delta_\tau. 
	\end{equation*}
	Indeed, since $u^{P,\natural} \in C^{\frac{p}{3}-\textnormal{var}}_{2, \varpi,L}([0,\tau]; \bH^{-3})$, there exits a control $w_{\natural}$ such that, for every $\phi \in \bH^{3}$, we have \begin{align*}
	\vert \tilde{u}_{st}^{P,\natural}(\phi) \vert =  \vert u_{(s+T)(t+T)}^{P,\natural}(\phi) \vert \leq c \Vert \phi\Vert_{\bH^{3}}  (w_{\natural}(s+T,t+T))^{\frac{3}{p}}. 
	\end{align*}
	Hence, by setting $\tilde{w}_{\natural}(s,t) := w_{\natural}(s+T,t+T)$, we get $\Vert \tilde{u}_{st}^{P,\natural} \Vert_{-3} \leq c (\tilde{w}_{\natural}(s,t))^{\frac{3}{p}}, \forall (s,t)\in \Delta_\tau $ and finishes the proof of Lemma \ref{AL2}. 
\end{proof}

For $w_1, w_2 \in \bX$ and $T >0$ we define the continuation operator $\omega_1 \cup_T \omega_2$ by
\[w_1 \cup_T w_2 (\tau) := \left\{
\begin{array}{l}
w_1(\tau) \ \mbox{for}\ 0 \leq \tau \leq T,\\ \\
w_2(\tau - T) \ \mbox{for}\ \tau > T. \end{array} \right.
\]

\begin{Lemma}[Continuation property] \label{AL3}
	
	Let $[u_0, E_0] \in \bD$, $\bZ$ be an $\alpha$-H\"older rough path, and
	\[
	[u,  E] \in \clU [u_0,  E_0,\bZ],
	\ [\tilde{u}, \tilde{E}] \in \clU [u(T),  \mathcal{E},\tilde{\bZ}] \ \quad \mbox{for some}\  \quad \mathcal{E} \leq E(T-).
	\]
	
	Then
	\[
	[u, E] \cup_T [\tilde{u}, \tilde{E}]  \in \clU [u_0,  E_0,\bZ].
	\]
	
\end{Lemma}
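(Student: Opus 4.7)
Set $[w, F] := [u, E] \cup_T [\tilde u, \tilde E]$, and verify the four conditions of Definition~\ref{def-weaksolution} for $[w,F]$ as a weak solution to \eqref{NSDiffFormSystem_u} starting from $[u_0, E_0]$ and driven by $\bZ$. Conditions (a)--(b) are immediate by gluing: $w \in L^2_\loc\bH^1 \cap L^\infty_\loc\bH^0$ with a weakly continuous representative in $\bH^0$ thanks to the matching $\tilde u(0) = u(T)$, while $F(t) = \tfrac12 |w_t|_0^2$ a.e.\ holds termwise. For the energy inequality in (c), I would split any test interval $[\tau_1, \tau_2]$ at $T$ when $T \in (\tau_1, \tau_2)$, apply \eqref{ineq-energy} for $[u, E]$ on $[\tau_1, T]$ with test function $\psi$ and for $[\tilde u, \tilde E]$ on $[0, \tau_2-T]$ with the shifted test function $s \mapsto \psi(s+T)$, and sum the two. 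This reproduces the desired inequality modulo a boundary contribution $[E(T+) - \mathcal{E}]\psi(T)$ at the crossing point, which is non-positive thanks to $E(T+) \leq E(T-)$, $\tilde E(0+) \leq \tilde E(0) = \mathcal{E}$, and the standing hypothesis $\mathcal{E} \leq E(T-)$.

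The main work is condition (d), the $p/3$-variation regularity in $\bH^{-3}$ of the remainder $w^{P,\natural}_{st}$ defined by \eqref{SystemSolutionU} with $w$ in place of $u$. For intervals $[s,t] \subset [0,T]$ or $[s,t] \subset [T, \infty)$, the defining identity reduces to the one for $u^{P,\natural}_{st}$ or for $\tilde u^{P,\natural}_{0,t-T}$ (using $\tilde A^{P,i}_{0,t-T} = A^{P,i}_{Tt}$, i.e.\ the shift of $\bZ$ generates the shift of the driver), so the required control is inherited from each piece. The delicate case is $s < T < t$. Inserting $T$ into the defining identity for $w^{P,\natural}_{st}$ and invoking Chen's relations for the unbounded rough drivers,
\[
A^{P,1}_{st} = A^{P,1}_{sT} + A^{P,1}_{Tt}, \qquad A^{P,2}_{st} = A^{P,2}_{sT} + A^{P,2}_{Tt} + A^{P,1}_{Tt} A^{P,1}_{sT},
\]
together with $\delta w_{st} = \delta u_{sT} + \delta \tilde u_{0, t-T}$ and the decomposition $\int_s^t = \int_s^T + \int_T^t$ of the Lebesgue integrals, a short algebraic manipulation produces the key identity
\[
w^{P,\natural}_{st}(\phi) = u^{P,\natural}_{sT}(\phi) + \tilde u^{P,\natural}_{0,t-T}(\phi) + \delta u_{sT}\bigl([A^{P,1,*}_{Tt} + A^{P,2,*}_{Tt}]\phi\bigr) - u_s\bigl(A^{P,1,*}_{sT} A^{P,1,*}_{Tt}\phi\bigr),
\]
valid for every $\phi \in \bH^3$.

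To establish the $p/3$-variation bound for the right-hand side, I would control the two remainder terms via the given controls $\varpi$ and its shift $\varpi(\cdot+T, \cdot+T)$, and handle the two Chen cross terms using the operator bounds \eqref{A12bound} for $A^{P,1,*}, A^{P,2,*}$. The key point is that the naive estimate $|\delta u_{sT}|_0 \leq 2|u|_{L^\infty \bH^0}$ gives only $\omega_Z^\alpha$-behaviour in the cross term $\delta u_{sT}(A^{P,1,*}_{Tt}\phi)$, which is the wrong scale for $p/3$-variation. Instead, one re-expands $\delta u_{sT}$ via the defining identity for $u^{P,\natural}_{sT}$ as $\delta u_{sT} = A^{P,1}_{sT} u_s + A^{P,2}_{sT} u_s + (\text{integral term}) + u^{P,\natural}_{sT}$; substituting into $\delta u_{sT}(A^{P,1,*}_{Tt}\phi)$ produces a leading contribution $u_s(A^{P,1,*}_{sT} A^{P,1,*}_{Tt}\phi)$ that exactly cancels the last term in the key identity. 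The residual pieces are genuinely of second order in the driver, of type $\omega_Z(s,T)^\alpha \omega_Z(T,t)^\alpha$ times an adjoint acting on $\phi$, and combine via superadditivity of $\omega_Z$ into a control of exponent $3/p = 3\alpha > 1$ (compatible thanks to $\alpha > 1/3$). One then constructs a new superadditive control $\bar\varpi$ from $\varpi$, its shift, and $\omega_Z$ for which $|w^{P,\natural}_{st}|_{-3} \leq \bar\varpi(s,t)^{3/p}$ holds on a neighbourhood of the diagonal, completing the verification of (d). The main obstacle throughout is precisely this cancellation of the leading-order Chen cross-term, which is the heart of why the concatenation respects the rough structure of the equation.
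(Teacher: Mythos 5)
Your proposal is correct and takes essentially the same route as the paper: the same split at the crossing point $T$, the same Chen-relation decomposition of $A^{P,i}_{st}$, and the same key identity for $v^{P,\natural}_{st}$ with the crucial cancellation of the iterated $A^{P,1}A^{P,1}$ cross term, which the paper captures compactly by grouping $\delta u_{sT} - A^{P,1}_{sT}u_s$ into the second-order remainder $u^\sharp_{sT}$ and invoking its $\tfrac{p}{2}$-variation bound (Lemma~\ref{AprioriVariation1}) rather than fully re-expanding $\delta u_{sT}$ as you do. The only minor stylistic difference is that you carry out the expansion of $\delta u_{sT}$ explicitly; the paper's use of $u^\sharp$ and the product-of-controls lemma from \cite{FV_2010} is a slightly shorter way to close the same estimate.
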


\begin{proof}[\textbf{Proof of Lemma \ref{AL3}}]
	Since the initial energy for $[\tilde{u}, \tilde{E}]$ is less or equal to $E(T-)$, we have that the energy of the solution $[u, E] \cup_T [\tilde{u}, \tilde{E}]$ indeed remains non-increasing
	on $\T$ and bounded by $E_0$ from above.
	
	Let us set $v := [u, E] \cup_T [\tilde{u}, \tilde{E}]$.  It remains to show that, for every $\tau >0$, 
	$$v^{P,\natural} \in C^{\frac{p}{3}-\textnormal{var}}_{2, \varpi,L,\loc}(\T; \bH^{-3}),$$ where 
	\begin{align*}
	v_{st}^{P,\natural}(\phi)& :=   v_{t} (\phi ) -  v_{s} (\phi )   +  \int_{s}^{t} \left[ (\nabla  v_{r}, \nabla   \phi) + B_P(v_r)(\phi) \right]\,dr   \nonumber\\
	& \quad -  v_{s}([A_{st}^{P,1,*} +A_{st}^{P,2,*}]\phi). 
	\end{align*}
	For this we will prove that there exists a control $\omega_{P,A,u,\natural} $ such that
	\begin{equation}
		\Vert v_{st}^{P,\natural} \Vert_{\bH^{-3}}  \les \left(  \omega_{P,A,u,\natural} (s,t) \right)^{\frac{3}{p}} , \quad \forall s <t. 
	\end{equation}
	Recall that, by definition of solution, for every $s <t$, 
	\begin{align*}
	u_{st}^{P,\natural}(\phi)& =   u_{t} (\phi ) -  u_{s} (\phi )   +  \int_{s}^{t} \left[ (\nabla  v_{r}, \nabla   \phi) + B_P(u_r)(\phi) \right]\,dr   \nonumber\\
	& \quad -  u_{s}([A_{st}^{P,1,*} +A_{st}^{P,2,*}]\phi), 
	\end{align*}
	and 
	\begin{align*}
	\tilde{u}_{st}^{P,\natural}(\phi)& =   \tilde{u}_{t} (\phi ) -  \tilde{u}_{s} (\phi )   +  \int_{s}^{t} \left[ (\nabla  \tilde{u}_{r}, \nabla   \phi) + B_P(\tilde{u}_r)(\phi) \right]\,dr   \nonumber\\
	& \quad -  \tilde{u}_{s}([\tilde{A}_{st}^{P,1,*} +\tilde{A}_{st}^{P,2,*}]\phi), 
	\end{align*}
	where 
	\begin{align*}
	& A_{st}^{P,1}  = P[ (\sigma_k \cdot \nabla) ] Z_{st}^k =: P_{1,k} Z_{st}^k, \quad A_{st}^{P,2}   = P[ (\sigma_k \cdot \nabla) P[ (\sigma_l \cdot \nabla) ] ]\Z_{st}^{l,k} =: P_{2,l,k} \Z_{st}^{l,k} \nonumber\\
	& \tilde{A}_{st}^{P,1}  = P_{1,k} \tilde{Z}_{st}^k, \quad \tilde{A}_{st}^{P,2}   = P_{2,l,k} \tilde{\Z}_{st}^{l,k}.   
	\end{align*}

	Note that the only interesting case is $s < T <t$ because if $s<t \leq T$ or $ s <t  \in (T,\infty)$ then only one out of $u$ or $\tilde{u}$ is active. Since $ u_{T} (\phi ) = \tilde{u}_{0} (\phi )$, for $s < T <t$ and $\phi \in \bH^3$, we  have
	
	\begin{align}
	& \delta v_{st}(\phi)   = \delta \tilde{u}_{0(t-T)}(\phi)  + \delta u_{sT}(\phi)  \nonumber\\
	&  =   \tilde{u}_{0}([\tilde{A}_{0(t-T)}^{P,1,*} +\tilde{A}_{0(t-T)}^{P,2,*}]\phi) - \int_{0}^{t-T} \left[ (\nabla \tilde{u}_{r}, \nabla   \phi) + B_P( \tilde{u}_{r})(\phi) \right]\,dr   + \tilde{u}_{0(t-T)}^{P,\natural}(\phi) \nonumber\\
	& \quad+  u_{s}([A_{sT}^{P,1,*} +A_{sT}^{P,2,*}]\phi) -  \int_{s}^{T} \left[ (\nabla  u_{r}, \nabla   \phi) + B_P(u_r)(\phi) \right]\,dr  + u_{sT}^{P,\natural}(\phi) \nonumber\\ 
	&  =   u_T([A_{Tt}^{P,1,*} +A_{Tt}^{P,2,*}]\phi) - \int_{s}^{t} \left[ (\nabla v_{r}, \nabla   \phi) + B_P(v_{r})(\phi) \right]\,dr   + \tilde{u}_{0(t-T)}^{P,\natural}(\phi) \nonumber\\
	& \quad+  u_{s}([A_{sT}^{P,1,*} +A_{sT}^{P,2,*}]\phi)  + u_{sT}^{P,\natural}(\phi).  \nonumber
	\end{align}
	Let us first observe that,  since $u_s = v_s$, due to Chen's relation $\Z_{st}^{l,k} - \Z_{sT}^{l,k} - \Z_{Tt}^{l,k} = Z_{sT}^l \otimes Z_{Tt}^k$,
	\begin{align*}
	& [A_{Tt}^{P,1} +A_{Tt}^{P,2}]u_T  +  [A_{sT}^{P,1} +A_{sT}^{P,2}]u_{s} \\
	& \quad = P_{1,k} u_T  Z_{Tt}^k +  P_{2,l,k} u_T  \Z_{Tt}^{l,k} + P_{1,k} u_s Z_{sT}^k +  P_{2,l,k} u_s  \Z_{sT}^{l,k} \\
	& \quad = P_{1,k} \delta u_{sT}  Z_{Tt}^k +  P_{2,l,k} \delta u_{sT}  \Z_{Tt}^{l,k} + P_{1,k} u_s  Z_{st}^k  +  P_{2,l,k} u_s  (\Z_{st}^{l,k} -  Z_{sT}^l \otimes Z_{Tt}^k) \\
	& \quad =  [A_{st}^{P,1} +A_{st}^{P,2}] v_s+   A_{Tt}^{P,1}\delta u_{sT}  +   A_{Tt}^{P,2}  \delta u_{sT}   -    A_{Tt}^{P,1} A_{sT}^{P,1}u_s . 
	\end{align*}
	Consequently, 
	\begin{align*}
	v_{st}^{P,\natural}(\phi)&  =   \delta v_{st} (\phi ) +  \int_{s}^{t} \left[ (\nabla  v_{r}, \nabla   \phi) + B_P(v_r)(\phi) \right]\,dr   -  v_{s}([A_{st}^{P,1,*} +A_{st}^{P,2,*}]\phi) \\
	& =   v_s ([A_{st}^{P,1,*} +A_{st}^{P,2,*}]\phi) +  \delta u_{sT}  A_{Tt}^{P,1,*}\phi +  \delta u_{sT}  A_{Tt}^{P,2,*}\phi   -  u_s   A_{Tt}^{P,1,*} A_{sT}^{P,1,*}\phi \\
	& \quad + \tilde{u}_{0(t-T)}^{P,\natural}(\phi) \dela{+  u_{s}([A_{sT}^{P,1,*} +A_{sT}^{P,2,*}]\phi)}  + u_{sT}^{P,\natural}(\phi) -  v_{s}([A_{st}^{P,1,*} +A_{st}^{P,2,*}]\phi) \\
	& =  \delta u_{sT} A_{Tt}^{P,2,*}  \phi   + A_{Tt}^{P,1,*}  [ \delta u_{sT}    -  u_s    A_{sT}^{P,1,*} ] \phi  + \tilde{u}_{0(t-T)}^{P,\natural}(\phi) + u_{sT}^{P,\natural}(\phi). 
	\end{align*}
	With $\alpha = \frac{1}{p}$, estimate \eqref{ineq:UBRcontrolestimates} and Lemmata \ref{Thm2.5}-\ref{AprioriVariation1} we obtain
	\begin{align*}
	\Vert v_{st}^{P,\natural} \Vert_{\bH^{-3}} & \le \Vert  \delta u_{sT} \Vert_{\bH^{-1}}  \Vert A_{Tt}^{P,2,*}  \Vert_{\mathcal{L}(\bH^{-1},\bH^{-3})}  + \Vert  \delta u_{sT}    -  u_s    A_{sT}^{P,1,*}  \Vert_{\bH^{-2}}  \Vert A_{Tt}^{P,1,*} \Vert_{\mathcal{L}(\bH^{-2},\bH^{-3})}  \nonumber\\
	& \quad + (\omega_{\tilde{u},\natural}(0,t-T))^{\frac{3}{p}}  + (\omega_{u,\natural}(s,T))^{\frac{3}{p}} \nonumber\\
	& \le (\omega_{A}(s,t))^{\frac{2}{p}}  (\omega_{u}(s,t))^{\frac{1}{p}} + (\omega_{\natural}(s,t))^{\frac{2}{p}}  (\omega_{A}(s,t))^{\frac{1}{p}} + (\tilde{\omega}_{\natural}(s,t))^{\frac{3}{p}}  + (\omega_{u,\natural}(s,t))^{\frac{3}{p}} \\
	& \leq (\omega_{A,u}(s,t))^{\frac{3}{p}}   +  (\omega_{A,\natural}(s,t))^{\frac{3}{p}}  + (\tilde{\omega}_{\natural}(s,t))^{\frac{3}{p}}  + (\omega_{u,\natural}(s,t))^{\frac{3}{p}},
	\end{align*}
	where we have used \cite[Exercise 1.9 part (iii)]{FV_2010} to conclude that $\omega_{A,u} := (\omega_{A}(s,t))^{\frac{2}{3}}  (\omega_{u}(s,t))^{\frac{1}{3}}$ and $\omega_{A,\natural}:= (\omega_{\natural}(s,t))^{\frac{2}{3}}  (\omega_{A}(s,t))^{\frac{1}{3}} $ are controls.

	Hence, if we set $\omega_{P,A,u,\natural} :=  \omega_{A,u} + \omega_{A,\natural} + \tilde{\omega}_{\natural} + \omega_{u,\natural}  $, then we are done with the proof of Lemma~\ref{AL3}. 
\end{proof}

\subsection{General ansatz}\label{sec:ansatz}

Let us fix a rough path $\bZ$. In summary, so far we have shown the existence of a set--valued mapping
\begin{equation}\label{multivalueMapU}
\bD\times\mathcal{C}^{\alpha}_{g,\rm loc}(\T;\bR^K)\to  2^{\bX}, \quad [u_0,  E_0, \bZ] \mapsto \clU[u_0, E_0, \bZ],
\end{equation}
which enjoys the following properties:

\medskip

\begin{enumerate}[label={\bf (A\arabic{*})}]
	
	\item\label{A1} \textbf{Compactness:} For any $[u_0,  E_0, \bZ] \in \bD\times\mathcal{C}^{\alpha}_{g,\rm loc}(\T;\bR^K)$, the set $\clU[u_0,  E_0, \bZ]$ is a non--empty compact
	subset of $\bX$. Indeed, the compactness is equivalent to the weak sequential stability of the solution set which we get from  Theorem \ref{thm-SeqStabilitywrtZandD}. Non-emptiness of $\clU[u_0,  E_0, \bZ]$ follows from Theorem \ref{thm_existence}. 
	
	\item\label{A2} \textbf{Measurability:} The mapping \eqref{multivalueMapU}
	is Borel measurable,  where the range of $\clU$ is endowed with the Hausdorff metric.
Indeed, since $\clU[u_0,  E_0,\bZ]$ is a compact subset of the separable metric space $\bX$,  the Borel measurability of $\clU$  is equivalent to the measurability with respect to the Hausdorff metric on the subspace of compact sets in $2^{\bX}$. Whence,   it is sufficient to apply the following Stroock and Varadhan Lemma  with $Y = \bD$ and $X = \bX$.
	
	\begin{Lemma}\cite[Lemma 12.1.8]{StrVar}\label{lem-strook+varadhan}
		
		Let $Y$ be a metric space and $\clB$ its Borel $\sigma$-field. Let $y \mapsto  K_y$ be a map of $Y$ into $Comp(X)$ for some separable metric space $X$, with $Comp(X)$ the set of all the
		compact subsets of $X$. Suppose for any sequence $y_n \mapsto y$ and $x_n \in K_{y_n}$, it is true that $x_n$ has a limit point $x$ in $K_y$. Then the map $y \mapsto K_y$ is a Borel map of $Y$ into $Comp(X)$.
	\end{Lemma}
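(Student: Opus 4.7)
The plan is to verify Borel measurability by checking preimages of a generating sub-basis for the topology on $\mathrm{Comp}(X)$. Since $X$ is separable metric, so is $\mathrm{Comp}(X)$ in the Hausdorff distance, and the Hausdorff topology coincides with the Vietoris topology, which is generated by the two sub-basic families
\[
U^{-} := \{K \in \mathrm{Comp}(X) : K \subset U\}, \qquad U^{+} := \{K \in \mathrm{Comp}(X) : K \cap U \neq \emptyset\},
\]
with $U \subset X$ open. As $X$ is second countable, it suffices to prove that the preimages $\{y \in Y : K_y \in U^{-}\}$ and $\{y \in Y : K_y \in U^{+}\}$ are Borel subsets of $Y$ for every open $U \subset X$; then every open subset of $\mathrm{Comp}(X)$ is a countable union of finite intersections of such sub-basic sets, and Borel measurability follows.

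First I would extract from the hypothesis the following upper hemicontinuity statement: for every closed $F \subset X$, the set $\{y \in Y : K_y \cap F \neq \emptyset\}$ is closed in $Y$. Indeed, if $y_n \to y$ with $K_{y_n} \cap F \neq \emptyset$, pick $x_n \in K_{y_n} \cap F$; by assumption a subsequence of $(x_n)$ converges to some $x \in K_y$, and since $F$ is closed we also have $x \in F$, so $K_y \cap F \neq \emptyset$. Passing to complements via $K_y \subset U \Leftrightarrow K_y \cap (X \setminus U) = \emptyset$ this shows that $\{y : K_y \subset U\}$ is open in $Y$, hence Borel, for every open $U \subset X$. This handles the sub-basis $U^{-}$.

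For the sub-basis $U^{+}$, I would exploit that in a metric space every open set is an $F_\sigma$: write $U = \bigcup_n F_n$ with $F_n$ closed, e.g.\ $F_n := \{x \in X : \mathrm{dist}(x, X \setminus U) \geq 1/n\}$. Since a compact set meets $U$ if and only if it meets some $F_n$,
\[
\{y \in Y : K_y \cap U \neq \emptyset\} = \bigcup_n \{y \in Y : K_y \cap F_n \neq \emptyset\},
\]
a countable union of closed sets by the previous step, hence Borel. Combining the two cases we obtain Borel measurability on a sub-basis, and therefore on every open subset of $\mathrm{Comp}(X)$. The only step that requires care is that $\{y : K_y \cap U \neq \emptyset\}$ need not be either open or closed in general, which is why reducing to closed test sets via the $F_\sigma$ decomposition is the pivotal move; once this is done, the rest is a routine unraveling of the Vietoris topology.
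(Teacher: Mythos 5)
The paper does not supply a proof of this lemma; it is invoked directly as \cite[Lemma~12.1.8]{StrVar}, so there is no in-paper argument to compare against. Your proof is correct and is the standard one. The hypothesis yields exactly that $\{y \in Y : K_y \cap F \neq \emptyset\}$ is closed for every closed $F \subset X$ (upper hemicontinuity); complementation via $K_y \subset U \Leftrightarrow K_y \cap (X\setminus U) = \emptyset$ then shows $\{y : K_y \subset U\}$ is open for every open $U$; and the $F_\sigma$ decomposition of an open set in a metric space turns $\{y : K_y \cap U \neq \emptyset\}$ into a countable union of such closed sets, hence an $F_\sigma$ subset of $Y$. This is precisely the pivot you identify, and it is used correctly. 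One cosmetic remark: to make the ``countable union of finite intersections'' step literal, you should let $U$ range over a countable base $\{U_n\}$ of $X$ (this is exactly what the separability of $X$ supplies), so that the sets $U_n^{-}$ and $U_n^{+}$ form a countable sub-basis of the Vietoris topology on $Comp(X)$, whose generated $\sigma$-algebra coincides with the Borel $\sigma$-algebra of the Hausdorff metric; since your two estimates hold for every open $U$, restricting to base elements costs nothing.
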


	\item\label{A3} \textbf{Shift invariance:} For any $[u, E] \in \clU[u_0,  E_0,\bZ],$
	we have
	\[
	S_T \circ [u , E] \in \clU[u(T),  E(T-), \tilde{\bZ}_{T}]\ \mbox{for any}\ T > 0,
	\]
	where $\tilde{\bZ}_T(t) := S_T \circ \bZ(t)$ for all $t \geq 0$.
	\item\label{A4} \textbf{Continuation:} If $T > 0$, and $[u, E] \in \clU [u_0,  E_0,\bZ],
	\ [\tilde{u},\tilde{E}] \in \clU [u(T), E(T-),\tilde{\bZ}_T],$
	then
	\[
	[u, E] \cup_T [\tilde{u},\tilde{E}]  \in \clU [u_0, E_0, \bZ].
	\]
\end{enumerate}

\subsection{Selection sequence}\label{sec:Selection_seq}
Notice that, the idea for the construction of the selection is to make the set $\clU[u_0, E_0,\bZ]$  smaller and smaller by choosing the arguments of minima of particular functionals. More precisely, following the  arguments presented in \cite{Basaric_2020,Breit_etal_2020, Cardona+Kapitanski_2020}, we consider the following family of Krylov  functionals, see \cite{KrylNV}, 
\begin{equation*}
I_{\lambda,F}[u,E]= \int_{0}^{\infty} e^{-\lambda t}F(u(t),E(t)) dt, \quad \lambda>0,
\end{equation*}
where $F: \bH^{-1} \times \bR \rightarrow \bR$ is a bounded and continuous functional.

Given functional $I_{\lambda,F}$ and a set-valued mapping $\clU$, we define a selection mapping $I_{\lambda,F}\circ \clU$ by
\begin{align}\label{defn-selectionMapping}
& I_{\lambda,F}\circ \clU[u_0, E_0,\bZ] \nonumber \\
&\quad = \{ [u,E] \in \clU[u_0, E_0,\bZ] \ | \ I_{\lambda,F}[u, E] \leq I_{\lambda,F}[\tilde{u}, \tilde{E}] \ \mbox{for all }  [\tilde{u}, \tilde{E}] \in \clU[u_0, E_0,\bZ] \}.
\end{align}
In other words, the selection is choosing arguments of minima of the functional $I_{\lambda,F}$. Observe that, since $I_{\lambda,F}$ is continuous on $\bX$ and the set $\clU[u_0, E_0,\bZ]$ is compact in $\bX$, set $I_{\lambda,F}\circ \clU[u_0, E_0,\bZ] $ is non-empty. 
Our next result says that the set $I_{\lambda, F} \circ \clU$ enjoys the general ansatz if $\clU$ does. Recall that the perturbation rough path $\bZ$ is fixed. 
\begin{Proposition} \label{AP1}
	
	Let $\lambda > 0$ and $F$ be a bounded continuous functional on ${\rm{\bf H}}^{-1} \times \bR$.
	Let the multivalued mapping \eqref{multivalueMapU}
	have the properties \ref{A1}--\ref{A4}.
	Then the map $I_{\lambda,F} \circ \clU$ enjoys \ref{A1}--\ref{A4} as well.	
\end{Proposition}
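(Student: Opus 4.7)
The plan is to verify each of \ref{A1}--\ref{A4} in turn for $I_{\lambda,F}\circ\clU$, using the corresponding property of $\clU$ together with the special structure of the integral functional. A preliminary step used throughout is the continuity of $I_{\lambda,F}$ on $\bX$: if $[u^n,E^n]\to [u,E]$ in $C_{\loc}(\T;\bH^{-1})\times L^{1}_{\loc}(\T;\bR)$, one extracts an a.e.\ pointwise convergent subsequence of $\{E^n\}$, notes that $u^n(t)\to u(t)$ in $\bH^{-1}$ locally uniformly in $t$, and invokes dominated convergence with the integrable majorant $e^{-\lambda t}\Vert F\Vert_{\infty}$. Property \ref{A1} is then immediate: $I_{\lambda,F}\circ\clU[u_0,E_0,\bZ]$ is the set of minimizers of a continuous functional on the non-empty compact set $\clU[u_0,E_0,\bZ]$, hence non-empty, closed, and therefore compact.

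For shift invariance \ref{A3}, I fix $[u,E]\in I_{\lambda,F}\circ\clU[u_0,E_0,\bZ]$ and $T>0$. Property \ref{A3} for $\clU$ already gives $S_T\circ[u,E]\in\clU[u(T),E(T-),\tilde{\bZ}_T]$. To upgrade it to a minimizer I pick an arbitrary competitor $[\tilde v,\tilde G]\in\clU[u(T),E(T-),\tilde{\bZ}_T]$ and use \ref{A4} for $\clU$ to build $w:=[u,E]\cup_T[\tilde v,\tilde G]\in\clU[u_0,E_0,\bZ]$; the minimality of $[u,E]$, the cancellation of the integrals over $[0,T]$ in $I_{\lambda,F}[u,E]\leq I_{\lambda,F}[w]$, and the substitution $s=t-T$ in the remaining tail together yield $I_{\lambda,F}[S_T\circ(u,E)]\leq I_{\lambda,F}[\tilde v,\tilde G]$. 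For continuation \ref{A4}, once \ref{A3} is known for $I_{\lambda,F}\circ\clU$, the key observation is that whenever $[u,E]$ and $[\tilde u,\tilde E]$ are minimizers over the two data triples, both $S_T\circ[u,E]$ and $[\tilde u,\tilde E]$ minimize $I_{\lambda,F}$ on $\clU[u(T),E(T-),\tilde{\bZ}_T]$ and therefore have equal values. Splitting the integral at $T$ and using this equality shows that $[u,E]\cup_T[\tilde u,\tilde E]$ attains the same $I_{\lambda,F}$-value as $[u,E]$, and the latter is minimal on $\clU[u_0,E_0,\bZ]$.

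The main obstacle is the measurability \ref{A2}. Following the strategy used for $\clU$ itself, I plan to apply the Stroock--Varadhan Lemma~\ref{lem-strook+varadhan}. Given $[u_0^n,E_0^n,\bZ^n]\to[u_0,E_0,\bZ]$ and $[u^n,E^n]\in I_{\lambda,F}\circ\clU[u_0^n,E_0^n,\bZ^n]$, Theorem~\ref{thm-SeqStabilitywrtZandD} produces a subsequence converging to some $[u,E]\in\clU[u_0,E_0,\bZ]$, and continuity of $I_{\lambda,F}$ delivers $I_{\lambda,F}[u^n,E^n]\to I_{\lambda,F}[u,E]$. The delicate point is to promote $[u,E]$ to a minimizer on $\clU[u_0,E_0,\bZ]$, equivalently to establish the convergence of the value function $m(y):=\min_{\clU(y)}I_{\lambda,F}$. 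The bound $\liminf_n m(y_n)\geq m(y)$ is direct from Theorem~\ref{thm-SeqStabilitywrtZandD} (upper semicontinuity of $\clU$ combined with continuity of $I_{\lambda,F}$). The reverse inequality demands, for every competitor $[v,G]\in\clU[u_0,E_0,\bZ]$, the construction of an approximating sequence $[v^n,G^n]\in\clU[u_0^n,E_0^n,\bZ^n]$ with $I_{\lambda,F}[v^n,G^n]\to I_{\lambda,F}[v,G]$. Building such a Wong--Zakai type approximation of a prescribed solution from Theorem~\ref{thm_existence} and the rough-path stability encoded in Theorem~\ref{thm-SeqStabilitywrtZandD}, in the spirit of \cite{Basaric_2020,Breit_etal_2020,Cardona+Kapitanski_2020}, is the crux of the argument.
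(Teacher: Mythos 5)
Your arguments for \ref{A1}, \ref{A3}, and \ref{A4} are correct and mirror the approach of \cite[Proposition 5.1]{Breit_etal_2020}, which the paper simply defers to. The gap is in \ref{A2}. You try to replay for $I_{\lambda,F}\circ\clU$ exactly the Stroock--Varadhan Lemma~\ref{lem-strook+varadhan} argument used for $\clU$ itself: given $y_n\to y$ and minimizers $[u^n,E^n]$ on $\clU(y_n)$, extract a limit $[u,E]\in\clU(y)$ and show it is again a minimizer. As you observe, the latter step forces you to prove $\limsup_n m(y_n)\le m(y)$ for the value function $m(y)=\min_{\clU(y)}I_{\lambda,F}$, which amounts to producing, for every competitor $[v,G]\in\clU(y)$, a recovery sequence $[v^n,G^n]\in\clU(y_n)$ with $I_{\lambda,F}[v^n,G^n]\to I_{\lambda,F}[v,G]$. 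That is precisely a form of \emph{lower} semicontinuity of the set-valued map $\clU$, which is not available here: Theorem~\ref{thm-SeqStabilitywrtZandD} and \ref{A1} give only the upper (graph-closedness/compactness) direction. In general $m$ is only upper semicontinuous and the hypothesis of Lemma~\ref{lem-strook+varadhan} genuinely fails for the $\argmin$ map, so this route does not close.

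The paper avoids the issue by a factorization rather than a fresh sequential argument. It writes the target map as the composition of $[u_0,E_0,\bZ]\mapsto\clU[u_0,E_0,\bZ]$, already known Borel with values in $\mathcal K:=\mathrm{Comp}(\bX)$ (this was \ref{A2} for $\clU$, obtained via Lemma~\ref{lem-strook+varadhan}), with the map
\[
\mathcal K\to\mathcal K,\qquad K\mapsto \argmin_{K} I_{\lambda,F},
\]
which is Borel on $(\mathcal K, d_{\mathrm{Hausdorff}})$ by \cite[Lemma~12.1.7]{StrVar} because $I_{\lambda,F}$ is continuous on $\bX$. This second lemma asserts Borel (not continuous!) dependence of the $\argmin$ set on the compact set, and requires no lower semicontinuity of $\clU$ whatsoever; measurability of the composite then follows immediately. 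So the fix is not to complete the recovery-sequence construction but to replace it with the $\argmin$-on-compacta measurability lemma.
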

\begin{proof}[Proof of Proposition \ref{AP1}]
Since the analysis here is pathwise, we observe that compared to the proof of \cite[Proposition 5.1]{Breit_etal_2020} the existence of  $\bZ$ in the system does not create any extra difficulty. Consequently,  the proof follows step by step the lines of  \cite[Proposition 5.1]{Breit_etal_2020}.
 Let us only spell out the proof of the measurability \ref{A2}, since this is of great importance for the measurability of the random dynamical system in Section \ref{sec:rds}.
	\begin{itemize}	
		\item[(\textbf{A2})]
		Let $\mathcal{K} \subset 2^{\bX}$ be  the subspace  of  all the compact subsets of $\bX$.  Note that the map
	\begin{equation}\label{map2}
		\bD\times\mathcal{C}^{\alpha}_{g,\rm loc}(\T;\bR^K)\to  2^{\bX}, \quad [u_0,  E_0, \bZ] \mapsto I_{\lambda, F} \circ \clU[u_0, E_0, \bZ],
		\end{equation}
		takes values in $\mathcal{K}$. Moreover, it is the composition of the Borel measurable map \eqref{multivalueMapU} and the map defined using the continuous functional $\mathcal{I}_{\lambda, F}$
		\begin{equation}\label{map3}
		\mathcal{K}\to \mathcal{K},\quad   K \mapsto \mathcal{I}_{\lambda, F}[K] :=  \argmin_{ K} I_{\lambda, F} .
		\end{equation}
		Hence, \eqref{map2}  is Borel measurable since  \eqref{map3} is Borel measurable due to \cite[Lemma 12.1.7]{StrVar}. 
	\end{itemize}
\end{proof}

Next, we consider the functional $I_{1, \beta}$ where %
\[
\beta(u,  E) = \beta (E), \ \beta: \bR \to \bR \ \mbox{smooth, bounded, and strictly increasing.}
\]
We also recall the following characterization of minimality w.r.t $\prec$, introduced in Definition \ref{defn-admissible}.
\begin{Lemma}\cite[Lemma 5.2]{Breit_etal_2020} \label{CL2}
	Suppose that $[u, E] \in \clU[u_0, E_0, \bZ]$ satisfies
	\[
	\int_0^\infty \exp(-t) \beta( E(t) ) \, dt  \leq
	\int_0^\infty \exp(-t) \beta( \tilde{E}(t) ) \, dt
	\]
	for any $[\tilde{u}, \tilde{E}] \in \clU[u_0, E_0,\bZ]$.
	Then $[u,  E]$ is $\prec$ minimal, meaning, admissible.
\end{Lemma}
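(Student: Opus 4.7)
The plan is to test the minimality hypothesis against an arbitrary rival $[\tilde u,\tilde E] \in \clU[u_0,E_0,\bZ]$ satisfying $[\tilde u,\tilde E] \prec [u,E]$, and to push the comparison through $\beta$ to force $\tilde E = E$ on $\T$, which is exactly what admissibility requires.

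First I would translate the order $\prec$ into an integrable inequality. By definition of $\prec$, $[\tilde u,\tilde E] \prec [u,E]$ says $\tilde E(t\pm) \leq E(t\pm)$ for every $t > 0$. Both $E$ and $\tilde E$ are non-increasing, hence continuous off a countable (null) set, so passing to values at common continuity points yields $\tilde E(t) \leq E(t)$ for a.e.\ $t \in \T$. Since $\beta$ is non-decreasing, applying it and integrating against $e^{-t}\,dt$ delivers
\[
I_{1,\beta}[\tilde u,\tilde E] \leq I_{1,\beta}[u,E].
\]
Combining this with the hypothesis of the lemma, $I_{1,\beta}[u,E] \leq I_{1,\beta}[\tilde u,\tilde E]$, I obtain equality of the two integrals. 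The non-negative a.e.\ integrand $e^{-t}\bigl(\beta(E(t)) - \beta(\tilde E(t))\bigr)$ must therefore vanish a.e., and the strict monotonicity of $\beta$ upgrades this to $E(t) = \tilde E(t)$ for a.e.\ $t \in \T$.

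It remains to match the formulation of admissibility in Definition~\ref{defn-admissible} by promoting a.e.\ equality to the identification $E = \tilde E$ on all of $\T$. Here I would invoke the monotonicity of $E$ and $\tilde E$ one last time: for a non-increasing function the one-sided limits $f(t\pm)$ are determined by the values of $f$ on any deleted neighbourhood of $t$, so a.e.\ equality of $E$ and $\tilde E$ transfers to equality of all one-sided limits at every $t > 0$; combined with the shared initial datum $E(0) = \tilde E(0) = E_0$ this yields $E = \tilde E$ on $\T$ in the required sense. The main step to watch is this last bookkeeping over the at most countable set of discontinuities of the two energies, but it is entirely controlled by their non-increasing structure; conceptually the heart of the argument is the contrapositive use of the strict monotonicity of $\beta$ to convert an integral equality into a pointwise one.
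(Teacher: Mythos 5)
Your argument is correct, and since the paper cites Lemma~5.2 of \cite{Breit_etal_2020} without reproducing the proof, it is in effect the intended one: translate $\prec$ into $\tilde E\le E$ a.e.\ using the monotonicity of the energies, apply the non-decreasing $\beta$ to reverse-compare the Krylov integrals, use the hypothesis to force equality, invoke strict monotonicity of $\beta$ to get $E=\tilde E$ a.e., and upgrade to everywhere via the non-increasing structure and the shared initial datum. This matches the standard selection-lemma argument in \cite{Breit_etal_2020} in approach and detail.
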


Finally, we have all in hand to present the first main result of the present paper.

\begin{Theorem} \label{thm-semiflowSel}
	The Navier-Stokes equation \eqref{NSDiffFormSystem_u} admits a semiflow selection $U$ in the class of weak solutions in the sense of Definition \ref{defn-semiflowSelection}. Moreover, we have that $U\{ u_0, E_0, \bZ \}$ is admissible in the sense of Definition \ref{defn-admissible}, for any $[u_0, E_0, \bZ]\in \bD\times\mathcal{C}^{\alpha}_{g,\rm loc}(\T;\bR^K)$.
\end{Theorem}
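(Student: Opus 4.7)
The plan is to iterate the Krylov selection procedure of Proposition~\ref{AP1}. I would fix a countable family $\{F_n\}_{n \geq 1}$ of bounded continuous functionals on $\bH^{-1} \times \bR$, dense enough that, together with the parameters $\{\lambda_n\} = \mathbb{Q} \cap (0, \infty)$, the Krylov functionals $\{I_{\lambda, F_n}\}$ separate points of $\bX$. To build admissibility into the selection, I would prepend a distinguished functional $F_0(u, E) := \beta(E)$ with smooth, bounded, strictly increasing $\beta$, and $\lambda_0 = 1$. Enumerate all such pairs into a single sequence $(\lambda_n, F_n)_{n \geq 0}$.

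Set $\clU_0 := \clU$ and define inductively $\clU_{n+1} := I_{\lambda_n, F_n} \circ \clU_n$ via the rule \eqref{defn-selectionMapping}. By Proposition~\ref{AP1}, each $\clU_n$ inherits all four properties \ref{A1}--\ref{A4}. Put
\[
U\{u_0, E_0, \bZ\} := \bigcap_{n \geq 0} \clU_n[u_0, E_0, \bZ].
\]
Property \ref{A1} together with the finite intersection property make this a non-empty compact subset of $\bX$; the separating property of the Krylov family then collapses it to a singleton. Admissibility of $[u, E] := U\{u_0, E_0, \bZ\}$ follows from the $F_0 = \beta$ selection step and Lemma~\ref{CL2}. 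Borel measurability of $U$ follows from the Borel measurability \ref{A2} of each $\clU_n$ as a map into the space of compact subsets of $\bX$ (Hausdorff metric) and the fact that a decreasing countable intersection of compact-valued measurable maps is itself measurable, yielding a Borel map into $\bX$ under single-valuedness.

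To verify the semigroup property, fix $t_1 > 0$ and let $[v, F] := U\{u(t_1), E(t_1-), \tilde{\bZ}_{t_1}\}$. Shift invariance \ref{A3}, preserved through each selection step by Proposition~\ref{AP1}, gives $S_{t_1} \circ [u, E] \in \bigcap_{n} \clU_n[u(t_1), E(t_1-), \tilde{\bZ}_{t_1}] = \{[v, F]\}$, so $S_{t_1} \circ [u, E] = [v, F]$; evaluating at $t_2$ yields the identity $U\{u_0, E_0, \bZ\}(t_1 + t_2) = U\{U\{u_0, E_0, \bZ\}(t_1), \tilde{\bZ}_{t_1}\}(t_2)$. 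The main obstacle is the separation-of-points step: if two elements of $\bX$ give the same value to all $I_{\lambda_n, F_n}$, then agreement of the Laplace transforms $\lambda \mapsto \int_0^\infty e^{-\lambda t} F_n(u^i(t), E^i(t))\,dt$ at all rational $\lambda$ (hence, by continuity, at all $\lambda > 0$) forces $F_n(u^1(t), E^1(t)) = F_n(u^2(t), E^2(t))$ a.e.\ in $t$; a judicious countable choice of $\{F_n\}$ separating points of $\bH^{-1} \times \bR$, combined with the continuity of $u^i$ with values in $\bH^{-1}$, then delivers equality in $\bX$. The presence of the rough path $\bZ$ introduces no additional difficulty beyond what appears in \cite{Basaric_2020, Breit_etal_2020, Cardona+Kapitanski_2020}, since the selection, shift invariance, and continuation properties are formulated pathwise with the shifted rough path $\tilde{\bZ}_T$ already built into \ref{A3} and \ref{A4}.
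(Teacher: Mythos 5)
Your proposal is correct and follows essentially the same route as the paper: iterate the Krylov selection of Proposition~\ref{AP1}, seed it with $I_{1,\beta}$ to secure admissibility via Lemma~\ref{CL2}, intersect the nested compacts, and invoke Laplace-transform uniqueness (Lerch's theorem) to reduce to a singleton; measurability comes from \ref{A2} and the semigroup property from \ref{A3}. The only place where the paper is more explicit than your sketch is the choice of separating family: rather than a "judicious countable choice'' of $F_n$, it takes $F_{n}(u,E)=\beta\bigl(\langle u,\mathbf{e}_n\rangle_{\mathbf{L}^2}\bigr)$ for a countable basis $\{\mathbf{e}_n\}$ of $\mathbf{L}^2$ together with $F_0(u,E)=\beta(E)$, so that strict monotonicity of $\beta$ and totality of $\{\mathbf{e}_n\}$ yield $u^1=u^2$, $E^1=E^2$ a.e.\ directly; you would need to supply this (or an equivalent) family to make the separation step airtight.
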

\begin{proof} [\textbf{Proof of Theorem \ref{thm-semiflowSel}}]
	First note that by \eqref{defn-selectionMapping} it is clear that the new selection $I_{1,\beta} \circ \clU$ from $\clU$  contains only admissible solutions for any $[u_0, E_0, \bZ]\in \bD\times\mathcal{C}^{\alpha}_{g,\rm loc}(\T;\bR^K)$.
	
	 Next, we choose a countable basis  $\{ \textbf{e}_n \}_{n\in \bN}$ in $\bL^2$, and a countable set $\{ \lambda_k \}_{k\in \bN}$ which is dense in $(0,\infty)$. We consider a countable family of functionals, 
	\begin{align*}
	I_{k,0}[u,  E] &= \int_{0}^{\infty} e^{-\lambda_kt} \beta(E(t)) dt, \\
	I_{k,n}[u,  E]  &= \int_{0}^{\infty} e^{-\lambda_kt} \beta\left( \int_{\bT^3} u(t,\cdot) \cdot \textbf{e}_n dx \right) dt.
	\end{align*}
	The functionals are well defined since $ u(t,\cdot)\in \bH^{-1}(\bT^3; \bR^3)$ for all $t$. Let $\{ (k(j),n(j)) \}_{j=1}^{\infty}$ be an enumeration of the countable set
	\begin{equation*}
	(\bN\times \{ 0 \}) \cup (\bN \times \bN).
	\end{equation*}
	We define
	\begin{equation*}
	\clU^j := I_{k(j),n(j)} \circ \dots \circ I_{k(1),n(1)} \circ I_{1,\beta} \circ \clU, \quad j=1,2, \dots,
	\end{equation*}
	and 
	\begin{equation*}
	\clU^{\infty} := \bigcap_{j=1}^{\infty} \clU^j.
	\end{equation*}
	Next, we  claim that the set-valued mapping
	\begin{equation}\label{defn-Uinfty}
		\bD\times\mathcal{C}^{\alpha}_{g,\rm loc}(\T;\bR^K)\to 2^{\bX},\quad [u_0,  E_0, \bZ] \mapsto \clU^{\infty} [u_0,   E_0,\bZ] ,
	\end{equation}
	enjoys the properties (\textbf{A1})--(\textbf{A4}). Indeed:
	\begin{enumerate}
		\item[(\textbf{A1})] Let us take $[u_0,E_0, \bZ] \in \bD\times\mathcal{C}^{\alpha}_{g,\rm loc}(\T;\bR^K)$. Recall that, from Proposition \ref{AP1}, the set $I_{1, \beta} \circ \clU[u_0,  E_0]$ is compact. Since the sets $\clU^j[u_0, E_0,\bZ]$ are nested:
		\begin{equation*}
		I_{1,\beta} \circ \clU[u_0,  E_0] \supseteq \clU^1[u_0,  E_0] \supseteq \dots \supseteq \clU^j[u_0,  E_0] \supseteq \dots.
		\end{equation*}
		by iterating the procedure of Proposition \ref{AP1}, we get that, for each $j \in \bN$, $\clU^j[u_0,  E_0,\bZ]$ is compact. 
		
		Since $\bX$ is a Hausdorff space and $\clU^{\infty}[u_0,  E_0] $ is a closed subset of $I_{1, \beta} \circ \clU[u_0,  E_0]$,  we infer that $\clU^{\infty}[u_0,  E_0]$ is compact. Moreover, by Proposition \ref{AP1} we know that, for each $j \in \bN$, $\clU^j[u_0,  E_0,\bZ]$ is non-empty. Thus, due to the Cantor intersection theorem we have that $\clU^{\infty}[u_0,  E_0,\bZ] \neq \emptyset$;
		
		\item[(\textbf{A2})] Since the intersection of measurable set--valued maps is measurable, the map \eqref{defn-Uinfty} is measurable. 
		
		\item[(\textbf{A3})] In order to prove the shift invariance property, let $[u_0,E_0, \bZ] \in \bD\times\mathcal{C}^{\alpha}_{g,\rm loc}(\T;\bR^K)$ and $[u,  E]\in \clU^{\infty}[u_0,  E_0,\bZ]$. Thus,  $[u,  E]\in \clU^j[u_0,  E_0,\bZ]$ for every $j \in \bN$. Due to Proposition \ref{AP1}, we know that $I_{1, \beta} \circ \clU$ satisfies the shift invariance property, that is,  if $[u,E] \in I_{1, \beta} \circ \clU[u_0,  E_0,\bZ]$, then 
		$$S_T \circ [u,  E] \in I_{1, \beta} \circ \clU[u(T),  E(T-),\tilde{\bZ}], \mbox{ for all }T>0.$$
		By iterating this procedure we obtain that the shift invariance property holds for every $\clU^j$. This means that for 
		$$ [u,E] \in  \clU^j[u_0,  E_0,\bZ] = I_{k(j),n(j)} \circ \dots \circ I_{k(1),n(1)} \circ I_{1,\beta} \circ \clU[u_0,E_0],$$
		we have
		\begin{equation*}
		S_T \circ [u,  E] \in \clU^j[u(T),  E(T-), \tilde{\bZ}], \mbox{ for all }j \mbox{ and all }T>0.
		\end{equation*}
		Thus
		\begin{equation*}
		S_T \circ [u,  E] \in \clU^{\infty}[u(T), E(T-), \tilde{\bZ}], \mbox{ for all }T>0;
		\end{equation*}
		
		\item[(\textbf{A4})] In order to prove the continuation property, let $T>0$, $[u,E]\in  \clU^{\infty}[u_0,E_0,\bZ]$ and  $$[\tilde{u},\tilde{E}]\in \clU^{\infty}[u(T), E(T-),\tilde{\bZ}].$$
		Then, we have 
		$$[u,E]\in  \clU^j[u_0,  E_0,\bZ], \textrm{ and } [\tilde{u},\tilde{E}]\in \clU^j[u(T),E(T-),\tilde{\bZ}],  \qquad j \in \bN. $$
		By  Proposition \ref{AP1}, we have that $I_{1, \beta} \circ \clU$ satisfies the continuation property, and iterating this procedure we obtain that this property holds for every $\clU^j$. This means that
		\begin{equation*}
		[u, E] \cup_T [\tilde{u}, \tilde{E}] \in \clU^j[u_0,  E_0,\bZ] \mbox{ for all }j \mbox{ and all }T>0.
		\end{equation*}
		Thus
		\begin{equation*}
		[u, E] \cup_T [\tilde{u}, \tilde{E}] \in \clU^{\infty}[u_0,  E_0,\bZ] \mbox{ for all }T>0.
		\end{equation*}
	\end{enumerate}
	Next, we claim that for every $[u_0,E_0, \bZ] \in \bD\times\mathcal{C}^{\alpha}_{g,\rm loc}(\T;\bR^K)$ the set $\clU^{\infty}$ is a singleton, i.e., there exists $U\{ u_0,  E_0,\bZ \} \in \bX$ such that
	\begin{equation}\label{Umap}
	\clU^{\infty}[u_0,  E_0,\bZ]= \big\{ U\{ u_0,  E_0,\bZ \} \big\}. 
	\end{equation}
	To prove this, first observe that by \eqref{defn-selectionMapping}, for any $[u^1,  E^1], [u^2,  E^2]\in \clU^{\infty}[u_0,  E_0,\bZ]$, 
	\begin{equation*}
		I_{k(j),n(j)} [u^1, E^1] = I_{k(j),n(j)} [u^2, E^2], \qquad j \in \bN.
	\end{equation*}
	 Since  the integrals $I_{k(j), n(j)}$ can be seen as Laplace transforms 
	\begin{equation*}
	F(\lambda_k)= \int_{0}^{\infty} e^{-\lambda_k t}f(t)dt,
	\end{equation*}
	of the functions
	\begin{equation*}
	f\in \left\{ \beta(E), \  \beta\left(\int_{\bT^3}  u\cdot \textbf{e}_n dx \right)\right\}, 
	\end{equation*}
	the Lerch theorem \cite[Theorem 2.1]{Cohen}  implies that 
	\begin{align*}
	\beta(E^1(t)) &= \beta(E^2(t)), \\
	\beta\left(\int_{\bT^3} u^1(t,\cdot)\cdot \textbf{e}_n dx \right) &= \beta\left(\int_{\bT^3} u^2(t,\cdot)\cdot \textbf{e}_n dx \right),
	\end{align*}
	for all $n \in \bN$ and for a.e. $t\in (0,\infty)$. As $\beta$ is strictly increasing, we must have 
	\begin{equation*}
	E^1(t-)= E^2(t-), \quad \langle u^1(t,\cdot), \textbf{e}_n \rangle_{\bL^2}= \langle u^2(t,\cdot), \textbf{e}_n \rangle_{\bL^2},
	\end{equation*}
	for all $n \in \bN$ and for a.e. $t\in (0,\infty)$. Since $\{ \textbf{e}_n \}_{n\in \bN}$ form a basis in $\bL^2$, we deduce that
	\begin{equation*}
	u^1=u^2, \mbox{ and } E^1=E^2 \mbox{ a.e. on }(0,\infty).
	\end{equation*}
	Due to \eqref{Umap}, measurability of $U$ follows from (\textbf{A2}) for $\clU^\infty$. While  the semigroup property follows from (\textbf{A3}). Indeed, for $t_1,t_2 \geq 0$ it holds 
	\begin{equation*}
	U\{ u_0,  E_0,\bZ \} (t_1+t_2)= S_{t_1} \circ U\{ u_0,  E_0,\bZ \} (t_2) = U\{ U\{u_{0},E_{0},\bZ\}(t_{1}), \tilde{\bZ}_{t_{1}} \} (t_2),
	\end{equation*}
	where $\tilde{\bZ}_{t_1}(t_2) := \bZ (t_1+t_2)$. 
	This completes the proof of Theorem \ref{thm-semiflowSel}.
\end{proof}

\begin{Remark}\label{r3.10}
	It is important to highlight that one can introduce a new selection, associated with the considered Navier-Stokes equation  \eqref{NSDiffFormSystem_u}, defined only in terms of the initial velocity. However, in this case we can only achieve that, for each rough  path $\bZ$,  the semigroup property holds almost everywhere in time. The proof of this argument in our framework is similar to \cite[Section 5]{Basaric_2020}, where the author proves this claim for the compressible Navier-Stokes system without any perturbation. 
\end{Remark}

\section{Random dynamical system}\label{sec:rds}

Based on the semiflow selection from  the previous section  we investigate the existence of a  random dynamical system for Navier-Stokes equation \eqref{NSDiffFormSystem_u}.

Let $(\Omega, \mathcal{F})$
be a measurable space. A family $\theta =
(\theta_t)_{t\in\mathbb{T}}$ of maps from $\Omega$ to itself is called a \textbf{measurable
dynamical system} provided
\begin{enumerate}
	\item $(t, \omega) \mapsto \theta_t \omega$ is $\clB (\T)
	\otimes \mathcal{F} / \mathcal{F}$-measurable, where $\clB(\T)$ is the Borel sigma of $\T$,
	
	\item $\theta_0 = \tmop{Id}_{\Omega}$,
	
	\item $\theta_{s + t} = \theta_t \circ \theta_s$ for all $s, t \in \T$.
\end{enumerate}
If $\mathbb{P}$ is a probability measure on $(\Omega, \mathcal{F})$ that is invariant under $\theta$, i.e. $\mathbb{P} \circ \theta^{- 1}_t =\mathbb{P}$ for all $t \in \T$, we call  the quadruple $(\Omega, \mathcal{F}, \mathbb{P}, \theta)$ a measurable metric dynamical system.

The following is taken from L. Arnold's book, see \cite[Definition 1.1.1]{Arnold_1998B}. 

\begin{Definition}\label{def:rds}
	A \textbf{measurable  random dynamical system} (MRDS) on a measurable space $(X, \clX)$, over a metric dynamical system $(\Omega, \mathcal{F}, \mathbb{P}, (\theta_{t})_{t \in {\T}})$ with time ${\T}$ is a mapping $$\Phi : \mathbb{T} \times \Omega \times X \to X, \quad (t, \omega, x) \mapsto \Phi(t, \omega, x)\,$$ with the following properties
	\begin{enumerate}[i)]
		\item \textit{Measurability}: $\Phi$ is $(\clB(\mathbb{T}) \otimes \mathcal{F} \otimes \clX)/\clX$ measurable.
		\item \textit{Cocycle property}: The mappings $\Phi(t, \omega) \defeq \Phi(t, \omega, \cdot) : X \to X$ form a cocycle over $\theta$, i.e. they satisfy
		\begin{subequations}
			\begin{equation}
			\label{eq:1}
			\Phi(0, \omega) = \id_X \quad \forall \omega \in \Omega, 
			\end{equation}
			\begin{equation}
			\label{eq:2}
			\Phi(t+s, \omega) = \Phi(t,\theta_{s}\omega) \smallcirc \Phi(s, \omega) \quad \forall s,t \in \mathbb{T}, \omega \in \Omega.
			\end{equation}
		\end{subequations}
	\end{enumerate}
\end{Definition}

\begin{Remark}
	If the mapping $\Phi$ in Definition~\ref{def:rds} does not depend on $\omega$, then the dynamics on $X$ is independent \dela{of that }of the underlying dynamical system on $\Omega$, and $\Phi(t)$ satisfies the semigroup property. 
\end{Remark}

Let us fix  a measurable metric dynamical system $(\Omega, \mathcal{F}, \mathbb{P}, (\theta_{t})_{t \in\T})$. As defined in \cite[Section 2]{BRS_2017}, for $\alpha \in \left( \frac{1}{3}, \frac{1}{2} \right] $, we say that a measurable map 
\[ \bZ = (Z, \mathbb{Z}) : \Omega \rightarrow C_{2,\rm loc}^{\alpha} (\T; \bR^K) \times C_{2,\rm loc}^{2 \alpha} (\T ; \bR^{K
	\times K}), \]
is a geometric $\alpha$-H\"older rough path cocycle provided $\bZ (\omega)$ is a geometric $\alpha$-H\"older rough path and the following
cocycle property is satisfied
\[ Z_{s, s + t} (\omega) = Z_{0,t} (\theta_s \omega), \qquad \mathbb{Z}_{s, s + t}
(\omega) =\mathbb{Z}_{0, t} (\theta_s \omega), \]
holds true for every $s, t \in \T$ and $\omega \in \Omega$.

For the sake of completeness we include the following simple observation regarding the shift-property of an $\alpha$-H\"older rough path.

\begin{Lemma}\label{lem-rpShift}
Let $\bZ = (Z, \Z)$ be a geometric $\alpha$-H\"older rough path cocycle for some $\alpha \in \left( \frac{1}{3}, \frac{1}{2} \right] $. 	For every $0 \leq s \leq t$, $h >0$ and $\omega \in \Omega$, we have $Z_{s+h,t+h}(\omega) = Z_{s,t}(\theta_h \omega)$ and $\Z_{s+h,t+h}(\omega) = \Z_{s,t}(\theta_h \omega)$.

\end{Lemma}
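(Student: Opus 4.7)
The proof is essentially a bookkeeping computation combining the cocycle property of $\bZ$ with Chen's relation. Before starting, I would note that for a geometric $\alpha$-Hölder rough path, the first-level component $Z$ satisfies the additive relation
\[
Z_{s,t} = Z_{s,u} + Z_{u,t}, \qquad s \leq u \leq t,
\]
since the smooth approximations $Z^n_{s,t} = \delta z^n_{s,t}$ satisfy this identity exactly and it is preserved in the $\alpha$-Hölder limit used in Definition~\ref{defi-rough-path}.

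For the first identity, I would use the cocycle property twice. Applying it with base point $h$ to the endpoints $s+h$ and $t+h$ gives
\[
Z_{h,\,s+h}(\omega) = Z_{0,s}(\theta_h \omega), \qquad Z_{h,\,t+h}(\omega) = Z_{0,t}(\theta_h \omega).
\]
Combining these with additivity of $Z$ on $\omega$ and on $\theta_h\omega$ yields
\[
Z_{s+h,\,t+h}(\omega) = Z_{h,\,t+h}(\omega) - Z_{h,\,s+h}(\omega) = Z_{0,t}(\theta_h\omega) - Z_{0,s}(\theta_h\omega) = Z_{s,t}(\theta_h \omega),
\]
which is the first claim.

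For the second identity, I would apply Chen's relation at base point $h$ with intermediate points $s+h, t+h$:
\[
\Z_{h,\,t+h}(\omega) - \Z_{s+h,\,t+h}(\omega) - \Z_{h,\,s+h}(\omega) = Z_{h,\,s+h}(\omega) \otimes Z_{s+h,\,t+h}(\omega).
\]
Using the cocycle property for $\Z$ and $Z$ on the left and the first part of the lemma on the right, this rewrites as
\[
\Z_{0,t}(\theta_h\omega) - \Z_{s+h,\,t+h}(\omega) - \Z_{0,s}(\theta_h\omega) = Z_{0,s}(\theta_h\omega) \otimes Z_{s,t}(\theta_h\omega).
\]
On the other hand, Chen's relation applied at $\theta_h \omega$ with intermediate points $0, s, t$ gives
\[
\Z_{0,t}(\theta_h\omega) - \Z_{s,t}(\theta_h\omega) - \Z_{0,s}(\theta_h\omega) = Z_{0,s}(\theta_h\omega) \otimes Z_{s,t}(\theta_h\omega).
\]
Subtracting these two identities yields $\Z_{s+h,\,t+h}(\omega) = \Z_{s,t}(\theta_h \omega)$, completing the proof. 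There is no real obstacle here—the assumed cocycle property already encodes shift-invariance based at $0$, and the role of Chen's relation is simply to transport this information to arbitrary base points.
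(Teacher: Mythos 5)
Your proof is correct. The paper presents Lemma~\ref{lem-rpShift} as a ``simple observation'' and gives no proof at all, so there is nothing to compare against; your argument---using additivity of the first-level increment (justified via the geometric limit of smooth approximations, since Definition~\ref{defi-rough-path} only states Chen's relation for $\Z$) together with the cocycle identities anchored at base point $h$, then transporting to the base point $0$ via Chen's relation---is exactly the natural deduction one would supply, and every step checks out.
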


Finally, we have all in hand to formulate and prove the second main result of the present paper.

\begin{Theorem}\label{thm-rds}
	Assume that, for given measurable metric dynamical system $(\Omega, \mathcal{F}, \mathbb{P}, \theta)$,  the driving rough path $\bZ = (Z,\Z)$ is a geometric $\alpha$-H\"older rough path cocycle for some $\alpha \in \left( \frac{1}{3}, \frac{1}{2}\right]$.  Then the Navier-Stokes system \eqref{NSDiffFormSystem_u} generates a measurable random dynamical system on $\bD$.
\end{Theorem}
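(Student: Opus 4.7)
The plan is to define
\[
\Phi(t, \omega, [u_0, E_0]) := U\{u_0, E_0, \bZ(\omega)\}(t) = [u(t), E(t-)],
\]
where $[u,E] = U\{u_0, E_0, \bZ(\omega)\}$ is the selection furnished by Theorem~\ref{thm-semiflowSel}, read via the evaluation convention from the remark after Definition~\ref{def-weaksolution}. This $\Phi$ takes values in $\bD$ since $\tfrac{1}{2}|u(t)|_0^2 \leq E(t-)$ for every $t$, and $\Phi(0, \omega) = \id_\bD$ follows from $u(0)=u_0$ together with the built-in convention $E(0-) = E(0) = E_0$.

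For the cocycle identity I would combine the semigroup property of $U$ with the cocycle property of $\bZ$. Specifically, Theorem~\ref{thm-semiflowSel} applied with $t_1 = s$ and $t_2 = t$ gives
\[
U\{u_0, E_0, \bZ(\omega)\}(s+t) = U\{U\{u_0, E_0, \bZ(\omega)\}(s),\, \tilde{\bZ}(\omega)_s\}(t),
\]
where $\tilde{\bZ}(\omega)_s$ is the rough path $(\tau_1,\tau_2) \mapsto (Z_{s+\tau_1,s+\tau_2}(\omega),\, \mathbb{Z}_{s+\tau_1,s+\tau_2}(\omega))$. Lemma~\ref{lem-rpShift} identifies this shifted rough path with $\bZ(\theta_s \omega)$, so the right-hand side is exactly $\Phi(t, \theta_s\omega) \smallcirc \Phi(s, \omega)([u_0, E_0])$, which is the required cocycle relation~\eqref{eq:2}.

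Measurability would then be handled in two steps. First, since $\bZ : \Omega \to \mathcal{C}^\alpha_{g,\loc}(\T;\bR^K)$ is measurable by assumption and $U$ is Borel measurable by Theorem~\ref{thm-semiflowSel}, the composition $(\omega, [u_0, E_0]) \mapsto U\{u_0, E_0, \bZ(\omega)\}$ is a measurable map from $\Omega \times \bD$ into $\bX$. Second, one has to verify that the evaluation $\T \times \bX \to \bD$, $(t, [u,E]) \mapsto [u(t), E(t-)]$, is jointly Borel measurable. For the velocity component this is immediate from the continuity of $u \in C_\loc(\T; \bH^{-1})$ in both variables. For the energy component, one exploits the fact that $E$ is non-increasing to write $E(t-) = \lim_{n\to\infty} n\int_{t-1/n}^t E(r)\, dr$, which realises $E(t-)$ as a pointwise limit of jointly $(t,E)$-continuous functionals on $\T \times L^1_\loc(\T;\bR)$.

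The main delicate point is precisely this last piece: the trajectory space $\bX$ only encodes $E$ up to a.e. equivalence, yet the RDS definition requires genuine pointwise evaluation. The Lebesgue differentiation representation above sidesteps this by singling out the canonical left-continuous representative in a manifestly measurable way, so no measurable selection theorem is needed. Once all these measurability facts are in hand, the three properties of Definition~\ref{def:rds} reduce to transcribing the properties of $U$ from Theorem~\ref{thm-semiflowSel} via the identification $\tilde{\bZ}(\omega)_s = \bZ(\theta_s \omega)$.
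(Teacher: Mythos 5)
Your proposal is correct and follows essentially the same route as the paper: define $\Phi$ by evaluating the semiflow selection $U$ along the random rough path $\bZ(\omega)$, establish measurability by composing the Borel measurability of $U$ (from Theorem~\ref{thm-semiflowSel}) with measurability of the pointwise evaluation map (using a Lebesgue-differentiation representation for $E(t-)$), and reduce the cocycle property to the semigroup property of $U$ via the identification $\tilde{\bZ}_s(\omega) = \bZ(\theta_s\omega)$ from Lemma~\ref{lem-rpShift}. The only minor difference is that you are a bit more explicit than the paper about the \emph{joint} measurability of the evaluation $(t,[u,E])\mapsto[u(t),E(t-)]$ on $\T\times\bX$ (the paper checks separate measurability in $t$ for fixed $(\omega,[u_0,E_0])$), and your formula $E(t-)=\lim_{n} n\int_{t-1/n}^{t}E(r)\,dr$ corrects a small typo in the paper's corresponding display.
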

\begin{proof}[Proof of Theorem \ref{thm-rds}]
Given the random rough path $\bZ$
	%
	 and the semiflow $U$ constructed in Section \ref{sec:semiflow} we define 
	$$\varphi:  \Omega \times \bD \to C_{\loc}\bH_w^0  \times L_{\loc}^1(\T),\quad  (\omega,[u_0,E_0]) \mapsto U\{ u_0,E_0,\bZ(\omega)\}   .$$
By the definition of a rough path cocycle, this map factorizes as
$$
(\omega,[u_{0}, E_{0}])\mapsto (u_{0}, E_{0}, \bZ(\omega))\mapsto U\{u_{0}, E_{0}, \bZ(\omega)\}
$$
hence it is well-defined and measurable due to Theorem~\ref{thm-semiflowSel}. This is the point where the Wong-Zakai stability in the rough path setting becomes essential. Notice that, since $\varphi(\omega,[u_0,E_0]) \in \clU^\infty[u_0,E_0, \bZ(\omega)]$, we can evaluate it pointwise with respect to $t \in \T$. 

We claim that
	\begin{equation}\label{rdsMap}
		\Phi: \T \times \Omega \times \bD \to \bD,\quad (t,\omega,[u_0,E_0]) \mapsto \varphi(\omega,[u_0,E_0]) (t) ,
	\end{equation}
	is a measurable random dynamical system. To prove the claim, first observe that the map $\Phi$ is well-defined.   Next, it is clear that the measurability of $\Phi$ can be deduced if we show that for given $\omega \in \Omega$ and $[u_0,E_0] \in \bD$, 
	$$\T\to \bD ,\quad t \mapsto \varphi(\omega,[u_0,E_0]) (t)   \quad \textrm{ is measurable}. $$ 
	%
	%
	But this is indeed the case, because if $[u,E]=\varphi(\omega,[u_0,E_0])$ then $\T\to \bH^0 $, $ t \mapsto u_t $, is weakly continuous and the measurability of $\T \to \bR_+$, $ t \mapsto E(t-)  $, is a consequence of the Lebesgue differentiation theorem, since 
	$$E(t-) = \lim\limits_{h \to 0} \frac{1}{h} \int_{h}^{t+h} E(s) \, ds. $$

It remains to verify the cocycle property of $\Phi$. In view of  the definition of $\Phi$, the semiflow property of $U$ as well as Lemma~\ref{lem-rpShift}, we infer for all $t,s\in\mathbb{T}$, $\omega\in\Omega$
$$
\begin{aligned}
\Phi(t+s,\omega)([u_{0},E_{0}])&=\varphi_{t+s}(\omega)([u_{0},E_{0}])=U\{u_{0},E_{0},\bZ(\omega)\}(t+s)\\
&=U\{U\{u_{0},E_{0},\bZ(\omega)\}(s),\tilde\bZ_{s}(\omega)\}(t)\\
&=U\{U\{u_{0},E_{0},\bZ(\omega)\}(s),\bZ(\theta_{s}\omega)\}(t)\\
&=\varphi_{t}(\theta_{s}\omega)\circ \varphi_{s}(\omega)([u_{0},E_{0}])=\Phi(t,\theta_{s}\omega)\circ\Phi(s,\omega)([u_{0},E_{0}]),
\end{aligned}
$$
which completes the proof.
\end{proof}

\appendix
\section{A priori estimate and compactness}\label{appx:apriori_compactness}

Here we state, without proof, all the required a priori estimates from  \cite[Section 3]{Hofmanova_etal_2019}. Let us fix any $T >0$ and  assume that $u$ is the first component of a weak solution to  \eqref{NSDiffFormSystem_u} according to Definition~\ref{def-weaksolution}.

\begin{Lemma}\cite[Lemma 3.1]{Hofmanova_etal_2019} \label{Thm2.5}
	 For $(s,t)\in \Delta_T$ such that  $\varpi(s,t)\le L$, let $\omega_{P, \natural }(s,t) := |u^{P, \natural}|^{\frac{p}{3}}_{\frac{p}{3} -var; [s,t];\bH^{-3}}.$
	Then  there is a constant $\tilde{L}>0$, depending only on $p$ and $d$,  such that  for all $(s,t)\in \Delta_T$ with  $\varpi(s,t)\le L$ and $\omega_{A}(s,t)\leq \tilde{L}$,
	\begin{equation} \label{RemainderEstimateMu}
	\omega_{P, \natural }(s,t)  \lesssim_{p}|u|^{\frac{p}{3}}_{L^{\infty}_T\bH^0}  \omega_A(s,t) + \omega_{\mu}(s,t)^{\frac{p}{3}} (\omega_A(s,t)^{\frac{1}{3}}  + \omega_A(s,t)^{\frac{2}{3}}),
	\end{equation}
	and
	\begin{equation} \label{RemainderEstimateWithoutMu}
	\omega_{P, \natural }(s,t)  \lesssim_{p} |u|^{\frac{p}{3}}_{L^{\infty}_T\bH^0}  \omega_A(s,t) + ( 1 + |u|_{L^{\infty}_T\bH^0} )^{\frac{2p}{3}}(t-s)^{\frac{p}{3}} \omega_A(s,t)^{\frac{1}{12}}.
	\end{equation}
\end{Lemma}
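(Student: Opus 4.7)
The goal is to prove the two control-type bounds on $\omega_{P,\natural}$, i.e.\ on the $\frac{p}{3}$-variation of the remainder $u^{P,\natural}$ defined in \eqref{SystemSolutionU}. The plan is to apply a sewing-type argument for unbounded rough drivers, after first deriving a workable formula for the second-order increment $\delta u^{P,\natural}_{s\theta t}$ and estimating it in a weak enough negative Sobolev norm.

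\textbf{Step 1 (algebraic identity for the second increment).} First I would compute $\delta u^{P,\natural}_{s\theta t}$ by applying $\delta$ to the defining formula \eqref{SystemSolutionU}. Since $\delta(\delta u)_{s\theta t}=0$ and $\mu_{st}:=-\int_s^t[(\nabla u_r,\nabla\cdot)+B_P(u_r)(\cdot)]\,dr$ is additive in $s,t$ (so $\delta\mu_{s\theta t}=0$), only the rough terms survive. Using Chen's relation \eqref{eq:chen-relation}, namely $\delta A^{P,1}_{s\theta t}=0$ and $\delta A^{P,2}_{s\theta t}=A^{P,1}_{\theta t}A^{P,1}_{s\theta}$, a direct computation gives
\[
\delta u^{P,\natural}_{s\theta t}= A^{P,1}_{\theta t}\delta u_{s\theta}+ A^{P,2}_{\theta t}\delta u_{s\theta}-A^{P,1}_{\theta t}A^{P,1}_{s\theta}u_s.
\]
Next I would substitute $\delta u_{s\theta}=\mu_{s\theta}+A^{P,1}_{s\theta}u_s+A^{P,2}_{s\theta}u_s+u^{P,\natural}_{s\theta}$ back into the first term; the cross term $A^{P,1}_{\theta t}A^{P,1}_{s\theta}u_s$ then cancels exactly, leaving
\[
\delta u^{P,\natural}_{s\theta t}= A^{P,1}_{\theta t}\mu_{s\theta}+A^{P,1}_{\theta t}A^{P,2}_{s\theta}u_s+A^{P,1}_{\theta t}u^{P,\natural}_{s\theta}+A^{P,2}_{\theta t}\delta u_{s\theta}.
\]

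\textbf{Step 2 (term-by-term bounds in $\bH^{-3}$).} Using \eqref{ineq:UBRcontrolestimates} with $\alpha=1/p$, I would estimate each of the four summands in the $\bH^{-3}$ norm on the correct rung of the scale. The first two give $\omega_A(s,t)^{1/p}\omega_\mu(s,\theta)$ and $\omega_A(s,t)^{3/p}|u|_{L^\infty_T\bH^0}$ respectively (bounding $\mu$ by the control $\omega_\mu(s,t)=\int_s^t(1+|u_r|_1)^2dr$ via Cauchy–Schwarz and the trilinear estimate \eqref{trilinear form estimate}). The last one yields $\omega_A(s,t)^{2/p}|\delta u_{s\theta}|_{-1}$, which using the equation and bounds already produced is at worst of order $\omega_A(s,t)^{2/p}\bigl(\omega_\mu(s,t)+\omega_A(s,t)^{1/p}|u|_{L^\infty_T\bH^0}\bigr)$. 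Thus, up to a term involving the remainder itself,
\[
|\delta u^{P,\natural}_{s\theta t}|_{-3}\lesssim_p |u|_{L^\infty_T\bH^0}\omega_A(s,t)^{3/p}+\omega_\mu(s,t)\bigl(\omega_A(s,t)^{1/p}+\omega_A(s,t)^{2/p}\bigr)+\omega_A(s,t)^{1/p}|u^{P,\natural}_{s\theta}|_{-3}.
\]

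\textbf{Step 3 (sewing/rough-Gronwall with absorption).} The main obstacle is the term $A^{P,1}_{\theta t}u^{P,\natural}_{s\theta}$: it contains the unknown remainder on the right-hand side, so a bare-hand application of Gubinelli's sewing lemma does not close. I would invoke the variant developed for unbounded rough drivers (as in \cite{Bailluel+Gub_2017} and further refined in \cite{Deya_etal_2019}), whose statement is: if a 2-index map $R$ satisfies $\delta R_{s\theta t}=L_{s\theta t}+\varepsilon_{s\theta t}$ with $L_{s\theta t}$ of sewing type (controlled to power $>1$) and $|\varepsilon_{s\theta t}|\le \omega_A(s,t)^{\alpha}|R_{s\theta}|$ with $\omega_A$-small intervals, then $R$ itself is of the same order as $L$. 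Concretely, on intervals with $\omega_A(s,t)\le \tilde L$ (chosen small so the factor on the remainder is absorbed), this yields $\omega_{P,\natural}(s,t)\lesssim\omega_A(s,t)+\omega_\mu(s,t)^{p/3}(\omega_A(s,t)^{1/3}+\omega_A(s,t)^{2/3})$, which is precisely \eqref{RemainderEstimateMu}.

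\textbf{Step 4 (deduction of the $\mu$-free form).} Finally I would derive \eqref{RemainderEstimateWithoutMu} from \eqref{RemainderEstimateMu} by estimating the control $\omega_\mu(s,t)=\int_s^t(1+|u_r|_1)^2dr$ in a more elementary way. Using Cauchy–Schwarz,
\[
\omega_\mu(s,t)\le (t-s)^{1/2}\Bigl(\int_s^t(1+|u_r|_1)^4dr\Bigr)^{1/2}\lesssim (1+|u|_{L^\infty_T\bH^0})^2(t-s),
\]
which together with a Young inequality balancing $\omega_A(s,t)^{1/3}+\omega_A(s,t)^{2/3}$ against a small fractional power $\omega_A(s,t)^{1/12}$ (using the smallness $\omega_A\le \tilde L$ to upgrade $\omega_A^{2/3}$ into $\omega_A^{1/12}$) produces the claimed form. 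The hardest point is Step 3: carefully verifying the hypotheses of the rough-driver sewing lemma in the scale $(\bH^\beta)_{\beta\in\bR_+}$ and choosing $\tilde L$ so as to absorb the remainder term.
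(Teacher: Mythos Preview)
The paper does not give its own proof of this lemma: it is quoted in Appendix~\ref{appx:apriori_compactness} directly from \cite[Lemma~3.1]{Hofmanova_etal_2019} without argument. Your Steps~1--3 reproduce precisely the standard unbounded-rough-driver argument used in that reference (the algebraic identity for $\delta u^{P,\natural}_{s\theta t}$ via Chen's relation, the scale estimates \eqref{ineq:UBRcontrolestimates}, and the sewing/absorption lemma of \cite{Bailluel+Gub_2017,Deya_etal_2019}). One point worth making explicit in Step~3: the troublesome term $A^{P,1}_{\theta t}u^{P,\natural}_{s\theta}$ cannot be bounded directly in $\bH^{-3}$, since $u^{P,\natural}_{s\theta}$ is only known to lie in $\bH^{-3}$ and $A^{P,1}$ loses a derivative; the cited sewing lemma resolves this precisely via the smoothing operators $(J^\eta)_\eta$ of Section~\ref{sec:SmoothingOper}, which you should invoke explicitly.

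Step~4, however, contains a genuine gap. The displayed inequality
\[
\omega_\mu(s,t)\le (t-s)^{1/2}\Bigl(\int_s^t(1+|u_r|_1)^4\,dr\Bigr)^{1/2}\lesssim (1+|u|_{L^\infty_T\bH^0})^2(t-s)
\]
fails on both sides: there is no $L^4_T\bH^1$ control on $u$ (only $L^2_T\bH^1$ from the energy inequality), and even granting one the outcome would carry $(t-s)^{1/2}$, not $(t-s)$. More to the point, the claimed bound $\omega_\mu(s,t)\lesssim (1+|u|_{L^\infty_T\bH^0})^2(t-s)$ is simply false: writing $|u_r|_1^2=|u_r|_0^2+|\nabla u_r|_0^2$ one sees that $\omega_\mu(s,t)$ contains the dissipation term $\int_s^t|\nabla u_r|_0^2\,dr$, which by the energy inequality is bounded by $|u|_{L^\infty_T\bH^0}^2$ globally but does \emph{not} scale with $|t-s|$. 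Consequently \eqref{RemainderEstimateWithoutMu} cannot be obtained from \eqref{RemainderEstimateMu} by the substitution you propose; you need either to treat the dissipation contribution separately (it is uniformly bounded and can be traded against the extra smallness in $\omega_A$), or to return to the sewing step and estimate the drift in a way that isolates the $(t-s)$-scaling part from the energy part. As written, your Step~4 does not close.
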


\begin{Lemma}\cite[Lemma 3.3]{Hofmanova_etal_2019}  \label{AprioriVariation}
	Solution  $u$ belongs to $C^{p-\textnormal{var}}([0,T];\bH^{-1})$ and  there is a constant $\tilde{L}>0$, depending only on $p$ and $d$,   such that for all $(s,t)\in \Delta_T$ with $\varpi(s,t)\le L$, $\omega_{A}(s,t)\leq \tilde{L}$, and $\omega_{P,\natural}(s,t)\leq \tilde{L}$, it holds that
	$$
	\omega_u(s,t) \lesssim_{p} (1 + |u|_{L^{\infty}_T\bH^0})^{p} (\omega_{P, \natural }(s,t) + \omega_{\mu}(s,t)^{p} + \omega_A(s,t)), 
	$$
	where $\omega_u (s,t) := |u|^p_{p - \textnormal{var}; [s,t];\bH^{-1}}$.
\end{Lemma}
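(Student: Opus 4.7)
\bigskip
\noindent\textbf{Proof plan for Lemma \ref{AprioriVariation}.}

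The plan is to test the rough equation \eqref{SystemSolutionU} against a generic $\phi\in\bH^1$, split $\phi$ into a smooth and a non-smooth part via the smoothing family $J^{\eta}$ of Section~\ref{sec:SmoothingOper}, estimate each contribution by the appropriate control, and then optimize in $\eta$. This is exactly the same mechanism used in the proof of Theorem~\ref{thm-SeqStabilitywrtZandD} (compare the derivation of \eqref{uniformSolution}), only now read as an a priori estimate on a single solution rather than a uniform bound along a sequence.

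First I would fix $(s,t)\in\Delta_T$ with $\varpi(s,t)\le L$ and $\omega_A(s,t)\le \tilde L$ (with $\tilde L$ the constant from Lemma~\ref{Thm2.5}), together with $\omega_{P,\natural}(s,t)\le\tilde L$. For $\phi\in\bH^1$ and $\eta\in(0,1]$ I would write $\phi=J^\eta\phi+(I-J^\eta)\phi$ and bound the pieces as follows. The non-smooth piece is handled by $|(I-J^\eta)\phi|_0\lesssim\eta|\phi|_1$ together with $|u|_{L^\infty_T\bH^0}$, giving a contribution of order $\eta(1+|u|_{L^\infty_T\bH^0})|\phi|_1$. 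For the smooth piece, I substitute the weak-solution identity \eqref{SystemSolutionU} and handle each of its four terms separately: (i) the convective/Laplacian drift is bounded by $\omega_\mu(s,t)\,|J^\eta\phi|_1\lesssim \eta^{-2}\omega_\mu(s,t)\,|\phi|_1$ after using $|J^\eta\phi|_3\lesssim\eta^{-2}|\phi|_1$; (ii) the $A^{P,1}$ term is bounded via \eqref{ineq:UBRcontrolestimates} by $|u|_{L^\infty_T\bH^0}\,\omega_A(s,t)^\alpha|J^\eta\phi|_1\lesssim |u|_{L^\infty_T\bH^0}\omega_A(s,t)^{1/p}|\phi|_1$; (iii) the $A^{P,2}$ term similarly yields $|u|_{L^\infty_T\bH^0}\,\omega_A(s,t)^{2/p}\,\eta^{-1}|\phi|_1$; (iv) the remainder is controlled by $\omega_{P,\natural}(s,t)^{3/p}|J^\eta\phi|_3\lesssim\eta^{-2}\omega_{P,\natural}(s,t)^{3/p}|\phi|_1$.

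Combining the four estimates and the non-smooth part and then optimizing with the choice $\eta:=\omega_{P,\natural}(s,t)^{1/p}+\omega_\mu(s,t)^{1/p}+\omega_A(s,t)^{1/p}$ (which, after shrinking $\tilde L$ if necessary, can be assumed to lie in $(0,1]$) I would obtain
\[
|\delta u_{st}|_{-1}\ \lesssim_p\ (1+|u|_{L^\infty_T\bH^0})\bigl(\omega_{P,\natural}(s,t)^{1/p}+\omega_\mu(s,t)+\omega_A(s,t)^{1/p}\bigr).
\]
Raising to the $p$-th power and using $(a+b+c)^p\lesssim_p a^p+b^p+c^p$, this reads
\[
|\delta u_{st}|_{-1}^{p}\ \lesssim_p\ (1+|u|_{L^\infty_T\bH^0})^{p}\bigl(\omega_{P,\natural}(s,t)+\omega_\mu(s,t)^{p}+\omega_A(s,t)\bigr).
\]

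The last step is to recognize the right-hand side as a control, which yields the $p$-variation estimate via the equivalent definition \eqref{equivdefpvar}. Since $\omega_{P,\natural}$, $\omega_\mu$ and $\omega_A$ are controls and $\omega\mapsto\omega^p$ preserves this property ($p\geq 1$), their sum is a control as well, so taking the infimum in \eqref{equivdefpvar} delivers the stated bound on $\omega_u(s,t)=|u|^p_{p\text{-var};[s,t];\bH^{-1}}$. The main obstacle in executing the plan is the careful bookkeeping of the exponents of $\eta$ against those of $\omega_A$, $\omega_\mu$, $\omega_{P,\natural}$ so that the chosen $\eta$ simultaneously balances all four smooth-part contributions while keeping $\eta\in(0,1]$; this is precisely the role of the smallness conditions $\omega_A(s,t)\le\tilde L$ and $\omega_{P,\natural}(s,t)\le\tilde L$, so one may have to shrink $\tilde L$ slightly (still depending only on $p$ and $d$) in order to absorb all the implicit constants.
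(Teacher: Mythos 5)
The paper itself does not prove this lemma (it is imported verbatim from \cite[Lemma 3.3]{Hofmanova_etal_2019}), so your plan has to stand on its own. The overall mechanism you propose (substitute \eqref{SystemSolutionU}, split the test function with $J^\eta$, optimize in $\eta$, then pass from an increment bound by a control to the $p$-variation bound via superadditivity) is indeed the right one, and your treatment of the $A^{P,1}$, $A^{P,2}$ and remainder terms, as well as the final control/$p$-th power step, is fine.

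The genuine gap is in the drift term and in the choice of $\eta$. You bound the smooth part of the drift by $\eta^{-2}\omega_\mu(s,t)|\phi|_1$ (estimating $\delta\mu_{st}$ in $\bH^{-3}$ and paying $|J^\eta\phi|_3\lesssim\eta^{-2}|\phi|_1$), and you put $\omega_\mu^{1/p}$ into $\eta$. With $\eta=\omega_{P,\natural}^{1/p}+\omega_\mu^{1/p}+\omega_A^{1/p}$ this term only gives $\omega_\mu^{1-2/p}$, and the non-smooth part gives $\omega_\mu^{1/p}$; since $p=1/\alpha\in[2,3)$ and the controls are small ($\le\tilde L$), neither quantity is dominated by $\omega_\mu$, so your intermediate display does not follow, and after raising to the $p$-th power you would get $\omega_\mu^{p-2}$ and $\omega_\mu$ rather than the $\omega_\mu^{p}$ of the statement ($\omega_\mu^{p-2}$ need not even be superadditive for $p<3$, so the ``recognize the right-hand side as a control'' step also breaks). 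Note that the proof of Theorem~\ref{thm-SeqStabilitywrtZandD}, which you model this on, can afford the $\eta^{-2}$ loss because there only \emph{some} control modulus is needed (hence the extra exponent $\kappa$); that weaker bound is not the statement of Lemma~\ref{AprioriVariation}. The fix is to estimate the drift directly at the level of $\bH^{-1}$: the Laplacian part is immediate, and for the convective part use \eqref{trilinear form estimate} with $\beta_3=1$ (e.g.\ $\beta_1=\tfrac12$, $\beta_2=0$) together with interpolation $|u_r|_{1/2}\lesssim|u_r|_0^{1/2}|u_r|_1^{1/2}$, which yields $|\delta\mu_{st}(J^\eta\phi)|\lesssim(1+|u|_{L^\infty_T\bH^0})\,\omega_\mu(s,t)\,|\phi|_1$ with no negative power of $\eta$ (only $|J^\eta\phi|_1\lesssim|\phi|_1$ is used). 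Then choose $\eta:=\omega_{P,\natural}(s,t)^{1/p}+\omega_A(s,t)^{1/p}$, which balances $\eta^{-2}\omega_{P,\natural}^{3/p}\le\omega_{P,\natural}^{1/p}$ and $\eta^{-1}\omega_A^{2/p}\le\omega_A^{1/p}$ while the non-smooth part contributes $(1+|u|_{L^\infty_T\bH^0})(\omega_{P,\natural}^{1/p}+\omega_A^{1/p})$; this gives precisely $|\delta u_{st}|_{-1}\lesssim_p(1+|u|_{L^\infty_T\bH^0})(\omega_{P,\natural}(s,t)^{1/p}+\omega_\mu(s,t)+\omega_A(s,t)^{1/p})$, from which your concluding step delivers the lemma as stated.
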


\begin{Lemma} \cite[Lemma 3.4]{Hofmanova_etal_2019} \label{AprioriVariation1}
	The remainder $u^{\sharp}$ is in $C_2^{\frac{p}{2}-\textnormal{var}}([0,T];\bH^{-2})$ and  there is a constant $\tilde{L}>0$, depending only on $p$ and $d$,   such that for all $(s,t)\in \Delta_T$ with $\varpi(s,t)\le L$, $\omega_{A}(s,t)\leq \tilde{L}$, and $\omega_{P,\natural}(s,t)\leq \tilde{L}$, it holds that
	$$
	\omega_\sharp(s,t) \lesssim_{p}  (1 + |u|_{L^{\infty}_T\bH^0})^{\frac{p}{2}} (\omega_{P, \natural }(s,t) + \omega_{\mu}(s,t)^\frac{p}{2} + \omega_A(s,t)), 
	$$
	where $\omega_\sharp (s,t) := |u^\sharp|^\frac{p}{2}_{\frac{p}{2} - \textnormal{var}; [s,t];\bH^{-2}}$.
\end{Lemma}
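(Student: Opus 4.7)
Recall the first-order rough remainder
$$u^{\sharp}_{st} := \delta u_{st} - A^{P,1}_{st} u_s.$$
Adding and subtracting $A^{P,1}_{st}u_{s}$ in the weak-solution identity \eqref{SystemSolutionU}, the plan is to use the two equivalent expressions
$$u^{\sharp}_{st} = \delta u_{st} - A^{P,1}_{st} u_s = \delta\mu_{st} + A^{P,2}_{st} u_s + u^{P,\natural}_{st},\qquad \delta\mu_{st} := -\!\int_s^t\!\!\bigl[(\nabla u_r,\nabla\cdot) + B_P(u_r)\bigr]\,dr.$$
From the identity on the right, one reads off that $u^{\sharp}_{st}$ is actually controlled in $\bH^{-2}$: the drift $\delta\mu_{st}$ lies in $\bH^{-1}$ via the Cauchy--Schwarz bound $|\delta\mu_{st}|_{-1}\lesssim \omega_{\mu}(s,t)$, the second-order term satisfies $|A^{P,2}_{st}u_s|_{-2}\lesssim |u|_{L^{\infty}_{T}\bH^0}\,\omega_A(s,t)^{2/p}$ by \eqref{ineq:UBRcontrolestimates}, and only the genuine rough remainder $u^{P,\natural}_{st}$ is a priori in the rougher space $\bH^{-3}$.

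\textbf{Step 1: bounding $|u^{P,\natural}_{st}|_{-2}$ by smoothing.} This is where the main work lies. For $\phi\in\bH^{2}$ with $|\phi|_{2}\le 1$ and $\eta\in(0,1]$, split $\phi=J^{\eta}\phi+(I-J^{\eta})\phi$. On the smooth part,
$$|u^{P,\natural}_{st}(J^{\eta}\phi)|\le |u^{P,\natural}_{st}|_{-3}|J^{\eta}\phi|_{3}\lesssim \eta^{-1}\,\omega_{P,\natural}(s,t)^{3/p}$$
by \eqref{smoothingOperator} and the definition of $\omega_{P,\natural}$. On the rough part, use the identity $u^{P,\natural}_{st}=\delta u_{st}-A^{P,1}_{st}u_s - A^{P,2}_{st}u_s - \delta\mu_{st}$ together with \eqref{smoothingOperator}:
$$|\delta u_{st}((I-J^{\eta})\phi)|\lesssim |u|_{L^{\infty}_{T}\bH^{0}}\,\eta^{2},\quad |A^{P,1}_{st}u_s((I-J^{\eta})\phi)|\lesssim |u|_{L^{\infty}_{T}\bH^{0}}\,\omega_A(s,t)^{1/p}\,\eta,$$
$$|A^{P,2}_{st}u_s((I-J^{\eta})\phi)|\lesssim |u|_{L^{\infty}_{T}\bH^{0}}\,\omega_A(s,t)^{2/p},\quad |\delta\mu_{st}((I-J^{\eta})\phi)|\lesssim \omega_{\mu}(s,t)\,\eta.$$
Combining and optimizing over $\eta$ (essentially balancing $\eta^{-1}\omega_{P,\natural}^{3/p}$ against $\eta^{2}$) yields
$$|u^{P,\natural}_{st}|_{-2}\lesssim (1+|u|_{L^{\infty}_{T}\bH^{0}})\Bigl[\omega_{P,\natural}(s,t)^{2/p}+\omega_{\mu}(s,t) + \omega_A(s,t)^{2/p}\Bigr],$$
on the regime $\omega_A,\omega_{P,\natural}\le\tilde L$, which can be absorbed as desired (the factor $\omega_A^{2/p}$ comes for free from $A^{P,2}u_s$).

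\textbf{Step 2: passing to $p/2$-variation.} Raise the pointwise bound
$$|u^{\sharp}_{st}|_{-2}\lesssim (1+|u|_{L^{\infty}_{T}\bH^{0}})\Bigl[\omega_{P,\natural}(s,t)^{2/p}+\omega_{\mu}(s,t)+\omega_A(s,t)^{2/p}\Bigr]$$
to the power $p/2\ge 1$. Because $p/2\ge 1$, the function $x\mapsto x^{p/2}$ is convex, so for any control $\omega$ the map $\omega^{p/2}$ is again a control and finite sums of controls are controls. Using $(a+b+c)^{p/2}\lesssim_p a^{p/2}+b^{p/2}+c^{p/2}$ this produces a control $\omega_{\sharp}$ with
$$|u^{\sharp}_{st}|_{-2}^{p/2}\le \omega_{\sharp}(s,t)\lesssim_p (1+|u|_{L^{\infty}_{T}\bH^{0}})^{p/2}\bigl[\omega_{P,\natural}(s,t) + \omega_{\mu}(s,t)^{p/2} + \omega_A(s,t)\bigr].$$
By the equivalent definition \eqref{equivdefpvar} of $p$-variation this is exactly the claimed bound on $\omega_{\sharp}(s,t)=|u^{\sharp}|_{p/2-\mathrm{var};[s,t];\bH^{-2}}^{p/2}$.

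\textbf{Main obstacle.} The only non-routine step is Step~1: transferring $u^{P,\natural}$ from its native space $\bH^{-3}$ (with $p/3$-variation control) to $\bH^{-2}$ (with $p/2$-variation control). This trade of spatial regularity against time regularity is the heart of the unbounded-rough-driver formalism and mirrors the argument used in Lemma~\ref{Thm2.5}; the smoothing family $(J^{\eta})$ together with the Chen-relation structure encoded in \eqref{ineq:UBRcontrolestimates} is what makes the optimization in $\eta$ work.
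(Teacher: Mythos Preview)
The paper does not itself prove this lemma; it is stated in the appendix without proof, citing \cite[Lemma~3.4]{Hofmanova_etal_2019}. Your argument is correct and matches the standard approach in that reference: write $u^{\sharp}=\delta\mu+A^{P,2}u_s+u^{P,\natural}$, use the smoothing operators $(J^{\eta})$ to trade one degree of spatial regularity of $u^{P,\natural}$ (from $\bH^{-3}$ to $\bH^{-2}$) against one factor of time regularity (from $\omega_{P,\natural}^{3/p}$ to $\omega_{P,\natural}^{2/p}$) by balancing $\eta^{-1}\omega_{P,\natural}^{3/p}$ against the $\eta^{2}$ term, and then pass to $p/2$-variation using that $\omega_{P,\natural}+\omega_{\mu}^{p/2}+\omega_A$ is again a control for $p\ge 2$.

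One minor remark: your claim $|\delta\mu_{st}|_{-1}\lesssim\omega_{\mu}(s,t)$ needs the trilinear estimate \eqref{trilinear form estimate} with, say, $\beta_1=\tfrac12$, $\beta_2=0$, $\beta_3=1$, which gives $|B_P(u_r)|_{-1}\lesssim |u_r|_{0}^{1/2}|u_r|_{1}^{3/2}$ and hence picks up a harmless extra factor $(1+|u|_{L^{\infty}_{T}\bH^{0}})^{1/2}$. This is absorbed into the final prefactor $(1+|u|_{L^{\infty}_{T}\bH^{0}})^{p/2}$, so the conclusion is unaffected.
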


The following compact embedding result is useful in the proof of Sequential Stability Theorem~\ref{thm-SeqStabilitywrtZandD}. 
\begin{Lemma}\cite[Lemma A.2]{Hofmanova_etal_2019} \label{CompactnessLemma}
	Let  $\omega$ and $\varpi$  be a controls on $[0,T]$ and $L,\kappa >0$. Let
	$$
	X=L_T^2\bH^1 \cap \left\{ g  \in C_T\bH^{-1}  : |\delta g_{st}|_{-1} \leq \omega(s,t)^{\kappa}, \;\forall (s, t)  \in \Delta_{T} \textnormal{  with } \varpi(s,t) \leq L   \right\}
	$$
	be endowed with the norm
	$$
	|g|_X=|g|_{L_T^2\bH^1}+\sup_{t\in [0,T]}|g_t|_{-1}+\sup\left\{\frac{ |\delta g_{st}|_{-1}}{\omega(s,t)^{\kappa}}: (s,t)\in \Delta_T \;\textnormal{ s.t.} \;\varpi(s,t) \leq L\right\}.
	$$
	Then $X$ is compactly embedded into $C_T\bH^{-1}$ and $L^2_T \bH^0$.
\end{Lemma}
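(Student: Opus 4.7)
The plan is to prove both compact embeddings by a standard Arzelà–Ascoli argument in $C_T\bH^{-1}$, upgraded to $L^2_T\bH^0$ via an interpolation inequality. Fix a bounded sequence $\{g_n\}\subset X$ and set $M := \sup_n |g_n|_X < \infty$. The target is to extract a subsequence converging in $C_T\bH^{-1}$ and then, using the ambient $L^2_T\bH^1$ bound, in $L^2_T\bH^0$.

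First I would verify \emph{uniform equicontinuity} in $\bH^{-1}$. Since $\varpi$ is a continuous control on $\Delta_T$ vanishing on the diagonal and $[0,T]$ is compact, there exists $\delta>0$ such that $\varpi(s,t)\le L$ whenever $|t-s|\le\delta$. For such $(s,t)$ the defining estimate of $X$ gives $|\delta g_n{}_{st}|_{-1}\le M\,\omega(s,t)^\kappa$; by continuity of $\omega$ on the diagonal, $\omega(s,t)^\kappa\to 0$ uniformly as $|t-s|\to 0$, yielding uniform equicontinuity on $[0,T]$. Next I need \emph{pointwise relative compactness} in $\bH^{-1}$: for each $t\in[0,T)$ and $h\in(0,\delta]$ with $t+h\le T$, decompose
\begin{equation*}
g_n(t) \;=\; \frac{1}{h}\int_t^{t+h} g_n(r)\,dr \;+\; \frac{1}{h}\int_t^{t+h}\bigl(g_n(t)-g_n(r)\bigr)\,dr \;=:\; A_n(t,h)+R_n(t,h).
\end{equation*}
By Cauchy–Schwarz, $|A_n(t,h)|_{1}\le h^{-1/2}\,|g_n|_{L^2_T\bH^1}\le h^{-1/2}M$, so $\{A_n(t,h)\}_n$ is bounded in $\bH^1$ and hence relatively compact in $\bH^{-1}$ by the Rellich–Kondrachov compact embedding $\bH^1\hookrightarrow\hookrightarrow \bH^{-1}$. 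Meanwhile $|R_n(t,h)|_{-1}\le \sup_{r\in[t,t+h]}|\delta g_n{}_{tr}|_{-1}\le M\,\omega(t,t+h)^\kappa$, which is small uniformly in $n$ as $h\to 0$. Thus $\{g_n(t)\}_n$ is totally bounded, hence relatively compact, in $\bH^{-1}$ for each $t$ (the endpoint $t=T$ is handled symmetrically).

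Applying the Arzelà–Ascoli theorem for functions $[0,T]\to \bH^{-1}$ produces a subsequence, still denoted $\{g_n\}$, converging in $C_T\bH^{-1}$ to some $g$. To upgrade to $L^2_T\bH^0$ I invoke the interpolation inequality
\begin{equation*}
|f|_0^2 \;\le\; C\,|f|_{1}\,|f|_{-1},\qquad f\in\bH^1,
\end{equation*}
which is immediate from the definition of the norms via the operator $(I-\Delta)^{1/2}$. Applied to $g_n-g_m$ and integrated over $[0,T]$, Cauchy–Schwarz gives
\begin{equation*}
|g_n-g_m|_{L^2_T\bH^0}^2 \;\le\; C\,|g_n-g_m|_{L^2_T\bH^1}\,\bigl(T^{1/2}|g_n-g_m|_{C_T\bH^{-1}}\bigr) \;\le\; 2CM\,T^{1/2}|g_n-g_m|_{C_T\bH^{-1}},
\end{equation*}
so $\{g_n\}$ is Cauchy in $L^2_T\bH^0$ and therefore converges there as well.

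The only technical point that requires care is the uniform equicontinuity step, because the Hölder-type bound is only valid on pairs $(s,t)$ with $\varpi(s,t)\le L$; the resolution is purely topological, using continuity of $\varpi$ and compactness of $[0,T]$. Beyond that, the regularization by time-averaging is the standard device for converting an $L^2_T\bH^1$ bound into pointwise-in-time smoothness, and the interpolation trick to pass from $C_T\bH^{-1}\cap L^2_T\bH^1$ convergence to $L^2_T\bH^0$ convergence is classical, mirroring the Aubin–Lions scheme.
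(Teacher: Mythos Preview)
The paper does not supply its own proof of this lemma; it is stated in the appendix and attributed to \cite[Lemma~A.2]{Hofmanova_etal_2019}. Your argument is correct and follows the expected route: uniform equicontinuity in $\bH^{-1}$ from the control bound, pointwise relative compactness via time-averaging and the Rellich embedding $\bH^1\hookrightarrow\hookrightarrow\bH^{-1}$, then Arzel\`a--Ascoli for $C_T\bH^{-1}$, and finally the interpolation $|f|_0^2\le |f|_1|f|_{-1}$ combined with a Cauchy-sequence argument to obtain convergence in $L^2_T\bH^0$. The only delicate point---that the H\"older-type increment bound is available only when $\varpi(s,t)\le L$---you handle correctly by invoking continuity of $\varpi$ on the compact simplex $\Delta_T$ and the vanishing on the diagonal to produce a uniform $\delta$.
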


\def\cprime{$'$} \def\ocirc#1{\ifmmode\setbox0=\hbox{$#1$}\dimen0=\ht0
  \advance\dimen0 by1pt\rlap{\hbox to\wd0{\hss\raise\dimen0
  \hbox{\hskip.2em$\scriptscriptstyle\circ$}\hss}}#1\else {\accent"17 #1}\fi}

\end{document}